\definecolor{webgreen}{rgb}{0,.5,0}
\definecolor{webbrown}{rgb}{.6,0,0}
\begin{document}

\theoremstyle{plain}
\newtheorem{theorem}{Theorem}
\newtheorem{corollary}[theorem]{Corollary}
\newtheorem{proposition}{Proposition}
\newtheorem{lemma}{Lemma}
\newtheorem{example}{Example}
\newtheorem{remark}{Remark}

\numberwithin{equation}{section}
\numberwithin{example}{section}
\numberwithin{theorem}{section}
\numberwithin{remark}{section}
\numberwithin{proposition}{section}
\numberwithin{lemma}{section}

\newcommand{\braces}{\genfrac{\lbrace}{\rbrace}{0pt}{}}

\begin{center}

\vskip 1cm{\large\bf  
Binomial Transforms and the Binomial Convolution of Sequences}
\vskip 1cm
\large
Kunle Adegoke \\ 
Department of Physics and Engineering Physics\\
Obafemi Awolowo University\\
220005 Ile-Ife \\
Nigeria \\
\href{mailto:adegoke00@gmail.com }{\tt adegoke00@gmail.com}\\

\end{center}

\vskip .2 in

\begin{abstract}
Given any two sequences of complex numbers, we establish simple relations between their binomial convolution and the binomial convolution of their individual binomial transforms. We employ these relations to derive new identities involving Fibonacci numbers, Bernoulli numbers,  Catalan numbers, harmonic numbers, odd harmonic numbers, Stirling numbers of the second kind, and binomial coefficients. In addition, we present several results which allow the construction of new binomial-transform pairs from existing ones. Many new relations concerning self-inverse sequences are also derived.
\end{abstract}

\noindent 2020 {\it Mathematics Subject Classification}: Primary 05A10; Secondary 05A19, 11B68, 11B39. 

\noindent \emph{Keywords:} Binomial transform, binomial coefficient, binomial convolution, combinatorial identity, sequence, Bernoulli number, Catalan number, harmonic number, Stirling numbers of the second kind.

\section{Introduction}

Let $n$ be a non-negative integer. Let $(s_k)$ and $(\sigma_k)$, $k=0,1,2,\ldots$, be sequences of complex numbers. It is known that~\cite[p.4]{boyadzhiev18} (see also~\cite[p.136]{knuth3} and~\cite{poblete}):
\begin{equation}\label{first}
s_n  = \sum_{k = 0}^n {( - 1)^k \binom{{n}}{k}\sigma _k }\iff \sigma_n  = \sum_{k = 0}^n {( - 1)^k \binom{{n}}{k}s_k } .
\end{equation}
Such sequences $(s_k)$ and $(\sigma_k)$, as given in~\eqref{first}, will be called a binomial-transform pair of the first kind.

Consider two binomial-transform pairs of the first kind, say $\{(s_k), (\sigma_k)\}$ and $\{(t_k), (\tau_k)\}$, $k=0,1,2,\ldots$. We will establish the following binomial convolution identity:
\begin{equation*}
\sum_{k = 0}^n {( - 1)^k \binom nks_{n-k} t_k }  = \sum_{k = 0}^n {( - 1)^k \binom{{n}}{k}\sigma _k \tau _{n-k} } .
\end{equation*}

Let $(\bar s_k)$ and $(\bar\sigma_k)$, $k=0,1,2,\ldots$, be sequences of complex numbers. It is also known that
\begin{equation}\label{second}
\bar s_n  = \sum_{k = 0}^n {( - 1)^{n-k} \binom{{n}}{k}\bar\sigma _k }\iff \bar\sigma_n  = \sum_{k = 0}^n { \binom{{n}}{k}\bar s_k } .
\end{equation}
Such sequences $(\bar s_k)$ and $(\bar\sigma_k)$, as given in~\eqref{second}, will be called a binomial-transform pair of the second kind. In this case we also call $(\bar\sigma_k)$ the binomial transform of $(\bar s_k)$.

Given two binomial-transform pairs of the second kind, say $\{(\bar s_k), (\bar\sigma_k)\}$ and $\{(\bar t_k), (\bar\tau_k)\}$, $k=0,1,2,\ldots$, we will derive the following binomial convolution identity:
\begin{equation*}
\sum_{k = 0}^n {( - 1)^k \binom{{n}}{k}\bar s_k \bar t_{n-k} }  = \sum_{k = 0}^n {( - 1)^k \binom{{n}}{k}\bar\sigma _k \bar\tau _{n-k} } .
\end{equation*}
Throughout this paper, we will distinguish binomial-transform pairs of the second kind with a bar on the letter representing each sequence. We will use a Greek alphabet to denote each binomial transform and the corresponding Latin alphabet to represent the original sequence.

We will also derive the following binomial convolution of one sequence and the binomial transform of the other:
\begin{equation*}
\sum_{k = 0}^n {\binom{{n}}{k}\bar s_k \bar\tau_{n-k} }  = \sum_{k = 0}^n {\binom{{n}}{k}\bar\sigma _k \bar t _{n-k} }.
\end{equation*}
Given a binomial-transform pair of the first kind, $\{(s_k),(\sigma_k)\}$ and a binomial-transform pair of the second kind, $\{(\bar t_k),(\bar\tau_k)\}$, $k=0,1,2,\ldots$, we will derive the following binomial convolution identity:
\begin{equation*}
\sum_{k = 0}^n {\binom{{n}}{k}s_k \bar t_{n-k} }  = \sum_{k = 0}^n {(-1)^k\binom{{n}}{k} \sigma_k \bar \tau _{n-k} } .
\end{equation*}
We will give a short proof of the known result~\cite{chen07,gould14}:
\begin{equation*}
\sum_{k = 0}^n {( - 1)^k \binom{{n}}{k}s_{k + m} }  = \sum_{k = 0}^m {( - 1)^k \binom{{m}}{k}\sigma _{k + n} } ,
\end{equation*}
and, based partly on it, establish the following identity:
\begin{align*}
&\sum_{k = 0}^n {( - 1)^k \binom{{n}}{k}s_{n - k + m} \sum_{q = 0}^r {( - 1)^q \binom{{r}}{q}\tau _{k + q} } }\\
&\qquad  = \sum_{k = 0}^n {( - 1)^k \binom{{n}}{k}t_{n - k + r} \sum_{p = 0}^m {( - 1)^p \binom{{m}}{p}\sigma _{k + p} } } .
\end{align*}
In fact, we will prove that
\begin{equation*}
\sum_{k=0}^n{(-1)^k\binom nk\sum_{p = 0}^r {( - 1)^p \binom{{r}}{p}s _{k + p + m} }}=\sum_{k = 0}^m {( - 1)^k \binom mk\sigma _{n + k + r} },
\end{equation*}
and hence establish
\begin{align*}
&\sum_{k = 0}^n {( - 1)^k \binom{{n}}{k}\sum_{p = 0}^r {( - 1)^p \binom{{r}}{p}s_{n - k + p + m} } \sum_{j = 0}^u {( - 1)^j \binom{{u}}{j}t_{k + j + v} } }\\
&\qquad  = \sum_{k = 0}^n {( - 1)^k \binom{{n}}{k}\sum_{p = 0}^m {( - 1)^p \binom{{m}}{p}\sigma _{k + p + r} } \sum_{j = 0}^v {( - 1)^j \binom{{v}}{j}\tau _{n - k + j + u} } } ,
\end{align*}
valid for non-negative integers $m$, $n$, $r$, $u$ and $v$ and any binomial-transform pairs $\{(s_k),(\sigma_k\}$ and $\{(t_k),(\tau_k\}$, $k=0,1,2,\ldots,$ of the first kind.

Binomial transform of products of sequences will be computed in Section~\ref{sec.products}. The results will complement those of Boyadzhiev~\cite{boyadzhiev16}.

In Section~\ref{sec.relations}, we will derive several results which allow one to construct new binomial-transform pairs from existing ones. For example we will show that if $(t_k)$ and $(\tau_k)$, $k=0,1,2,\dots$, are a binomial-transform pair of the first kind, then so are the sequences $(a_k)$ and $(\alpha_k)$, $k=0,1,2,\dots$, where
\begin{equation*}
a_k=\frac{1}{{\left( {k + 1} \right)\left( {k + 2} \right)}}\sum\limits_{j = 0}^k {t_j }\text{ and } \alpha_k=\frac{1}{{\left( {k + 1} \right)\left( {k + 2} \right)}}\sum\limits_{j = 0}^k {\tau _j } .
\end{equation*}
If, in~\eqref{first}, $\sigma_k=s_k$ for every $k=0,1,2,\ldots$, then the sequence $(s_k)$ is called an invariant sequence or a self-inverse sequence. If $\sigma_k=-s_k$ for every $k=0,1,2,\ldots$, then the sequence $(s_k)$ will be called an anti-self-inverse sequence. We will show in Section~\ref{sec.invariant} that every invertible sequence has a self-inverse sequence and an anti-self-inverse sequence associated with it. In particular, we will show that if the sequence $(s_k)$, $k=0,1,2,\ldots$, is a self-inverse sequence, then the sequence $(w_k)$ given by
\begin{equation*}
w_k  = \frac1{k+1}\sum_{j = 0}^{k-1} s_j,\quad k=0,1,2,\ldots, 
\end{equation*}
is an anti-self-inverse sequence, and vice versa.
\begin{remark}
Although conversion from one kind of binomial transform to the other is, in general, very straightforward (see Remark~\ref{rem.r7dv221}), it is convenient to retain the distinction between the two kinds because binomial coefficient identities occur naturally in either of the two kinds. Thus, binomial convolution identities can be written directly without the need to convert from one kind to the other.
\end{remark}

The binomial convolution identities developed in this paper will facilitate the discovery of an avalanche of new combinatorial identities. We now present a couple of identities, selected from our results, to whet the reader's appetite for reading on.

We derived various general identities involving binomial transform pairs, such as

\begin{equation*}
\sum_{k = 0}^n {( - 1)^{n - k} \binom{{n}}{k}\binom{{y - k}}{x}t_{n - k} }  = \sum_{k = 0}^n {( - 1)^k \binom{{n}}{k}\binom{{y - k}}{{y - x}}\tau _{n - k} }, 
\end{equation*}

\begin{equation*}
\sum_{k = 0}^n {( - 1)^{n - k} \binom{{n}}{k}\frac{{m\,H_{k + m} }}{{k + m}}\,t_{n - k} } = \sum_{k = 0}^n {( - 1)^k \binom{{n}}{k}\binom{{k + m}}{m}^{ - 1} \left( {H_{k + m}  - H_k } \right)\tau _{n - k} },
\end{equation*}
and
\begin{equation*}
\sum_{k = 0}^n {\binom{{n}}{k}H_{k + m} \bar t_{n - k} }  = H_m \bar \tau _n  - \sum_{k = 1}^n {( - 1)^k \binom{{n}}{k}\binom{{k + m}}{m}^{ - 1}\frac1k\, \bar \tau _{n - k} } ,
\end{equation*}
where $H_k$ is a harmonic number and $m$ is a complex number that is not a negative integer.

We discovered the following binomial convolution of harmonic and odd harmonic numbers:
\begin{equation*}
\sum_{k = 0}^n {( - 1)^{n - k} \binom{{n}}{k}H_k O_{n - k} }= \sum_{k = 1}^{n - 1} {( - 1)^k \binom{{n}}{k}\frac{{2^{2\left( {n - k} \right) - 1} }}{{k\left( {n - k} \right)}}\binom{{2\left( {n - k} \right)}}{{n - k}}^{ - 1} } .
\end{equation*}
We obtained some identities involving Bernoulli numbers and Bernoulli polynomials, including the following:
\begin{equation*}
\sum_{k = 0}^n {\binom nkkB_k }  
= \begin{cases}
 nB_n,&\text{if $n$ is an even integer or $n=1$} ; \\ 
  - nB_{n - 1},&\text{if $n>1$ is an odd integer}; 
 \end{cases} 
\end{equation*}
\begin{equation*}
\sum_{k = 0}^n {( - 1)^k \binom{{n}}{k}\left( {\frac{y}{w}} \right)^k B_{n - k} (y)B_k (w)}  = \frac{{ny}}{{2w}}\left( {1 - \left( {\frac{y}{w}} \right)^{n - 2} } \right)B_{n - 1},\quad\text{$n$ an odd integer}, 
\end{equation*}
and
\begin{equation*}
\sum_{k = 0}^n {( - 1)^{n - k} \binom{{n}}{k}\binom{{x}}{k}B_{n - k} }  = \sum_{k = 0}^n {\binom{{n}}{k}\binom{{x + k}}{k}B_{n - k} } ,
\end{equation*}
where $x$, $y$ and $w$ are complex numbers.

We found the following binomial convolution of Fibonacci and Bernoulli numbers, valid for $n$ an even integer:
\begin{equation*}
\sum_{k = 0}^n {\binom{{n}}{k}F_k B_{n - k} }  = 0.
\end{equation*}
We obtained the following polynomial identities involving, respectively, harmonic numbers and odd harmonic numbers:
\begin{equation*}
\sum_{k = 0}^n {\binom{{n}}{k}H_k t^k }  =\sum_{k = 1}^n {( - 1)^{k - 1} \binom{{n}}{k}\frac{1}{k}\,t^k \left( {1 + t} \right)^{n - k} } 
\end{equation*}
and
\begin{equation*}
\sum_{k = 0}^n {\binom{{n}}{k}O_k t^k }  =\sum_{k = 1}^n {( - 1)^{k - 1} \binom{{n}}{k}2^{2k-1}\binom{2k}k^{-1}\frac{1}{k}\,t^k \left( {1 + t} \right)^{n - k} } ,
\end{equation*}
with the special values
\begin{equation*}
\sum_{k = 0}^n {\binom{{n}}{k}H_k }  = \sum_{k = 1}^n {( - 1)^{k - 1} 2^{n - k} \binom{{n}}{k}\frac{1}{k}} 
\end{equation*}
and
\begin{equation*}
\sum_{k = 0}^n {\binom{{n}}{k}O_k }  = \sum_{k = 1}^n {( - 1)^{k - 1} 2^{n + k - 1} \binom{{n}}{k}\binom{2k}k^{-1}\frac{1}{k}}. 
\end{equation*}
Let
\begin{equation*}
C_n=\frac1{n+1}\binom{2n}n,\quad n=0,1,2,\ldots,
\end{equation*}
be the Catalan numbers. By employing a result of Miki\'c~\cite{mikic}, we derived the following identity:
\begin{equation*}
\sum_{k = 0}^n {( - 1)^k \binom{{n}}{k}\binom{{k}}{{\left\lfloor {k/2} \right\rfloor }}\binom{{n - k}}{{\left\lfloor {\left( {n - k} \right)/2} \right\rfloor }}}  =
\begin{cases}
 C_{n/2} \binom n{n/2},&\text{$n$ even}; \\ 
 0,&\text{$n$ odd}; 
 \end{cases}   
\end{equation*}
where here and throughout this paper, $\lfloor x\rfloor$ is the greatest integer less than or equal to $x$.

We derived a couple of binomial coefficient identities, including the following:
\begin{equation*}
\sum_{k = 0}^n {\binom{{n}}{k}\binom{{n - k + m}}{u}\binom{{r}}{{j - k}}}  = \sum_{k = 0}^n {\binom{{n}}{k}\binom{{n - k + r}}{j}\binom{{m}}{{u - k}}},
\end{equation*}
which holds for complex numbers $j$, $m$, $r$, and $u$.

We derived many, mostly new, binomial transform identities involving special numbers, to serve as a pool of binomial-transform pairs. These are presented in Section~\ref{sec.table}.

We close this section by giving a short description of each of the special numbers from which illustrations will be drawn.

Binomial coefficients are defined, for non-negative integers $i$ and $j$, by
\begin{equation*}
\binom ij=
\begin{cases}
\dfrac{{i!}}{{j!(i - j)!}}, & \text{$i \ge j$};\\
0, & \text{$i<j$};
\end{cases}
\end{equation*}
the number of distinct sets of $j$ objects that can be chosen from $i$ distinct objects.

Generalized binomial coefficients are defined for complex numbers $r$ and $s$ by
\begin{equation}\label{y89d722}
\binom rs= \frac{{\Gamma (r + 1)}}{{\Gamma (s + 1)\Gamma (r - s + 1)}},
\end{equation}
where the Gamma function, $\Gamma(z)$, is defined for $\Re(z)>0$ by
\begin{equation*}
\Gamma (z) = \int_0^\infty  {e^{ - t} t^{z - 1}dt}  = \int_0^\infty  {\left( {\log (1/t)} \right)^{z - 1}dt},
\end{equation*}
and is extended to the rest of the complex plane, excluding the non-positive integers, by analytic continuation.

Harmonic numbers, $H_n$, and odd harmonic numbers, $O_n$, are defined for non-negative integers $n$ by
\begin{equation*}
H_n=\sum_{k=1}^n\frac1k,\quad O_n=\sum_{k=1}^n\frac1{2k-1},\quad H_0=0=O_0.
\end{equation*}

The Bernoulli numbers, $B_k$, are defined by the generating function
\begin{equation*}
\frac{z}{{e^z  - 1}} = \sum_{k = 0}^\infty  {B_k \frac{{z^k }}{{k!}}},\quad z<2\pi\,,
\end{equation*}
and the Bernoulli polynomials by the generating function
\begin{equation*}
\frac{{ze^{xz} }}{{e^z  - 1}} = \sum_{k = 0}^\infty  {B_k (x)\frac{{z^k }}{{k!}}},\quad|z|<2\pi\,.
\end{equation*}
Clearly, $B_k=B_k(0)$.

The first few Bernoulli numbers are
\begin{equation*}
B_0  = 1,\,B_1  =  - \frac{1}{2},\,B_2  = \frac{1}{6},\,B_3  = 0,\,B_4  =  - \frac{1}{{30}},\,B_5  = 0,\,B_6  = \frac{1}{{42}},\,B_7  = 0,\,\ldots\,,
\end{equation*}
while the first few Bernoulli polynomials are
\begin{equation*}
\begin{split}
&B_0 (x)= 1,\quad B_1 (x) = x - \frac{1}{2},\quad B_2 (x) = x^2  - x + \frac{1}{6},\quad B_3 (x) = x^3  - \frac{3}{2}x^2  + \frac{1}{2}x.
\end{split}
\end{equation*}
An explicit formula for the Bernoulli polynomials is
\begin{equation*}
\frac{B_n (x)}{x^n} = \sum_{k = 0}^n {\binom nk\frac{B_k}{x^k} }\,,
\end{equation*}
while a recurrence formula for them is
\begin{equation}\label{eq.v8egj4b}
B_n (x + 1) = \sum_{k = 0}^n {\binom nkB_k (x)}.
\end{equation}
The Fibonacci numbers, $F_n$, and the Lucas numbers, $L_n$, are defined, for \mbox{$n\in\mathbb Z$}, through the recurrence relations 
\begin{equation*}
F_n=F_{n-1}+F_{n-2}, \mbox{($n\ge 2$)},\quad\mbox{$F_0=0$, $F_1=1$};
\end{equation*}
and
\begin{equation*}
L_n=L_{n-1}+L_{n-2}, \mbox{($n\ge 2$)},\quad\mbox{$L_0=2$, $L_1=1$};
\end{equation*}
with
\begin{equation}
F_{-n}=(-1)^{n-1}F_n\,,\quad L_{-n}=(-1)^nL_n\,.
\end{equation}

Explicit formulas (Binet formulas) for the Fibonacci and Lucas numbers are
\begin{equation*}
F_n  = \frac{{\alpha ^n  - \beta ^n }}{{\alpha  - \beta }},\quad L_n  = \alpha ^n  + \beta ^n,\quad n\in\mathbb Z,
\end{equation*}
where $\alpha=(1+\sqrt 5)/2$ is the golden ratio and $\beta=(1-\sqrt 5)/2=-1/\alpha$.

The generalized Fibonacci sequence, $(G_n)$, $n=0,1,2,\ldots$, the so-called gibonacci sequence, is a generalization of $F_n$ and $L_n$; having the same recurrence relation as the Fibonacci sequence but with arbitrary initial values. Thus
\begin{equation*}
G_n  = G_{n - 1}  + G_{n - 2},\quad (n \ge 2);
\end{equation*}
with
\begin{equation*}
\quad G_{-n}=G_{-n+2}-G_{-n+1}.
\end{equation*}
where $G_0$ and $G_1$ arbitrary numbers (usually integers) not both zero. 

Stirling numbers of the second kind are defined for non-negative integers $m$ and $n$ through the generating function
\begin{equation*}
\sum_{k = 0}^n {\binom{{n}}{k}\braces{{ m}}{k}k!}  = n^m ,
\end{equation*}
and hence by binomial inversion, they have the explicit representation
\begin{equation}\label{ib2djg2}
\braces{{ m}}{n} = \frac{{( - 1)^n }}{{n!}}\sum_{k = 0}^n {( - 1)^k \binom{{n}}{k}k^m } .
\end{equation}
The Stirling numbers of the second kind obey the following recursion relation~\cite[Equation (6.3), p.259]{graham}:
\begin{equation}\label{qkejzsh}
\braces{{ n}}{m} = \braces{{ n - 1}}{{m - 1}} + m\braces{{ n - 1}}{m},\quad m=1,2,\ldots,n,
\end{equation}
and have the following special values:
\begin{equation}
\braces{{ n}}{0} = \delta _{n0} ,\quad\braces{{ n}}{1} = 1 = \braces{{ n}}{n},\quad\braces{{ n}}{{n - 1}} = \binom{{n}}{2},\quad\braces{{ n}}{2} = 2^{n - 1}  - 1,
\end{equation}
and
\begin{equation}\label{urlbi68}
\braces nm=0 \text{ for $n<m$}.
\end{equation}

\section{Identities involving binomial-transform pairs of the first kind}\label{sec.first}
Our first main result is stated in Theorem~\ref{thm.main1}; we require the result stated in the next lemma for its proof.
\begin{lemma}\label{lem.qqdca04}
Let $\left\{(a_k),(\alpha_k)\right\}$, $k=0,1,2,\ldots$, be a binomial-transform pair of the first kind. Let $\mathcal L_x$ be a linear operator defined by $\mathcal L_x(x^j)=a_j$ for every complex number $x$ and every non-negative integer~$j$. Then $\mathcal L_x((1-x)^j)=\alpha_j$.
\end{lemma}
\begin{proof}
We have
\begin{align*}
\mathcal L_x\left( {\left( {1 - x} \right)^j } \right) = \mathcal L_x\left( {\sum_{k = 0}^j {( - 1)^k \binom{{j}}{k}x^k } } \right) &= \sum_{k = 0}^j {( - 1)^k \binom{{j}}{k}\mathcal L_x\left( {x^k } \right)} \\
& = \sum_{k = 0}^j {( - 1)^k \binom{{j}}{k}a_k }\\ 
&=\alpha_j.
\end{align*}
\end{proof}
\begin{theorem}\label{thm.main1}
Let $n$ be a non-negative integer. If $\{(s_k), (\sigma_k)\}$ and $\{(t_k), (\tau_k)\}$, $k=0,1,2,\ldots$, are binomial-transform pairs of the first kind, then
\begin{equation}\label{main1}
\sum_{k = 0}^n {( - 1)^{n - k} \binom{{n}}{k}s_k t_{n-k} }  = \sum_{k = 0}^n {( - 1)^k \binom{{n}}{k}\sigma _k \tau _{n-k} } .
\end{equation}
\end{theorem}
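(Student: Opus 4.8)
The plan is to apply the lemma twice, introducing one linear operator for each binomial-transform pair, and then to recognize both sides of \eqref{main1} as two expansions of a single bivariate polynomial.

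First I would define a linear operator $\mathcal{L}_x$ by $\mathcal{L}_x(x^j)=s_j$; the lemma then gives $\mathcal{L}_x((1-x)^j)=\sigma_j$. Independently, working in a second variable $y$, I would define $\mathcal{M}_y$ by $\mathcal{M}_y(y^j)=t_j$, so that $\mathcal{M}_y((1-y)^j)=\tau_j$. Because the two operators act on disjoint variables, their composition $\mathcal{L}_x\mathcal{M}_y$ is well-defined on polynomials in $x$ and $y$ and satisfies $\mathcal{L}_x\mathcal{M}_y(x^k y^m)=s_k t_m$ as well as $\mathcal{L}_x\mathcal{M}_y\bigl((1-x)^k(1-y)^m\bigr)=\sigma_k\tau_m$.

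The key observation is that the single polynomial $(x-y)^n$ admits two expansions. On one hand, the binomial theorem gives $(x-y)^n=\sum_{k=0}^n(-1)^{n-k}\binom{n}{k}x^k y^{n-k}$; applying $\mathcal{L}_x\mathcal{M}_y$ term by term reproduces the left-hand side of \eqref{main1}. On the other hand, writing $x-y=(1-y)-(1-x)$ and expanding gives $(x-y)^n=\sum_{k=0}^n(-1)^k\binom{n}{k}(1-x)^k(1-y)^{n-k}$; applying the same operator reproduces the right-hand side. Since both expressions equal $\mathcal{L}_x\mathcal{M}_y\bigl((x-y)^n\bigr)$, the identity \eqref{main1} follows at once.

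The individual steps are routine once the setup is in place, so the main obstacle is conceptual rather than computational: one must justify that $\mathcal{L}_x\mathcal{M}_y$ acts independently on the two variables, so that it may legitimately be applied monomial-by-monomial to each expansion. This is immediate from the linearity of each operator together with the fact that $\mathcal{M}_y$ treats powers of $x$ as constants and $\mathcal{L}_x$ treats powers of $y$ (hence also the values $t_m$, $\tau_m$) as constants; nonetheless it is the crux of the argument and I would state it explicitly before carrying out the two expansions.
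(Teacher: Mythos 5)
Your proposal is correct and is essentially the paper's own argument: the paper likewise starts from the two binomial expansions of the same bivariate polynomial (its identity \eqref{eq.fltsd62}, both sides equal to $(y-x)^n$) and applies the two linear operators, one per variable, to convert powers into the sequences. The only cosmetic difference is that the paper applies $\mathcal L_x$ and $\mathcal L_y$ in succession rather than as a single composite operator, which sidesteps the independence point you rightly flag.
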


\begin{proof}
Let $n$ be a non-negative integer. Let $x$ and $y$ be complex numbers. Consider the following identity whose veracity is readily established by the binomial theorem:
\begin{equation}\label{eq.fltsd62}
\sum_{k = 0}^n {( - 1)^k \binom{{n}}{k} y^{n - k}x^k }  = \sum_{k = 0}^n {( - 1)^{n - k} \binom{{n}}{k} \left( {1 - y} \right)^{n - k} \left( {1 - x} \right)^k} .
\end{equation}
Let $(t_j)$, $j=0,1,2,\ldots$, be a sequence of complex numbers. Let $\mathcal L_x(x^j)=t_j$ .

Operate on both sides of~\eqref{eq.fltsd62} with $\mathcal L_x$ to obtain
\begin{equation*}
\sum_{k = 0}^n {( - 1)^k \binom{{n}}{k} y^{n - k}t_k }  = \sum_{k = 0}^n {( - 1)^{n - k} \binom{{n}}{k} \left( {1 - y} \right)^{n - k} \tau _k} ,
\end{equation*}
where
\begin{equation*}
\tau _k  = \mathcal L_x((1-x)^k)=\sum_{i = 0}^k {( - 1)^i \binom{{k}}{i}t_i }.
\end{equation*}
Thus,
\begin{equation}\label{eq.renl1hy}
\sum_{k = 0}^n {( - 1)^{n - k} \binom{{n}}{k}y^kt_{n - k}  }  = \sum_{k = 0}^n {( - 1)^k \binom{{n}}{k}\left( {1 - y} \right)^k\tau _{n - k} } .
\end{equation}
Let $(s_j)$, $j=0,1,2,\ldots$, be a sequence of complex numbers. Let $\mathcal L_y(y^j)=s_j$. The action of $\mathcal L_y$ on~\eqref{eq.renl1hy} produces
\begin{equation*}
\sum_{k = 0}^n {( - 1)^{n - k} \binom{{n}}{k}s_kt_{n - k}  }  = \sum_{k = 0}^n {( - 1)^k \binom{{n}}{k}\sigma_k\tau _{n - k} } ,
\end{equation*}
where
\begin{equation*}
\sigma _k  = \mathcal L_y((1-y)^k)=\sum_{i = 0}^k {( - 1)^i \binom{{k}}{i}s_i },
\end{equation*}
and the proof is complete.
\end{proof}
In the next example we illustrate Theorem~\ref{thm.main1}.
\begin{example}
Consider the following identity~\cite[Equation (10.7)]{boyadzhiev18}, where $x$ and $y$ are complex numbers and $n$ is a non-negative integer:
\begin{equation}\label{eq.zw4rawn}
\sum_{k = 0}^n {\binom{{n}}{k}( - 1)^k \binom{{y - k}}{x} }  = \binom{{y - n}}{{y - x}}.
\end{equation}
We can choose
\begin{equation*}
s_k  = \binom{{y - k}}{x},\quad\sigma _k  = \binom{{y - k}}{{y - x}},
\end{equation*}
and use these in~\eqref{main1} to obtain the following identity
\begin{equation}\label{eq.o20tqr7}
\sum_{k = 0}^n {( - 1)^{n - k} \binom{{n}}{k}\binom{{y - k}}{x}t_{n - k} }  = \sum_{k = 0}^n {( - 1)^k \binom{{n}}{k}\binom{{y - k}}{{y - x}}\tau _{n - k} }, 
\end{equation}
which holds for every binomial-transform pair of the first kind $\{(t_k),(\tau_k)\}$.

As an immediate consequence of~\eqref{eq.o20tqr7}, we have the following polynomial identity in the complex variable $t$:
\begin{equation*}
\sum_{k = 0}^n {( - 1)^{n - k} \binom{{n}}{k}\binom{{y - k}}{x}t^{n - k} }  = \sum_{k = 0}^n {( - 1)^k \binom{{n}}{k}\binom{{y - k}}{{y - x}}\left( {1 - t} \right)^{n - k} } ,
\end{equation*}
which can also be written as
\begin{equation}
\sum_{k = 0}^n {( - 1)^k \binom{{n}}{k}\binom{{y + k}}{x}t^k }  = \sum_{k = 0}^n {( - 1)^{n - k} \binom{{n}}{k}\binom{{y + k}}{{y + n - x}}\left( {1 - t} \right)^k },
\end{equation}
which is valid for all complex numbers $x$,  $y$ and $t$.
\end{example}

\begin{example}\label{ex.ur405iz}
If we identify
\begin{equation*}
s_k  = \frac{{H_{k + m} }}{{k + m}},\quad\sigma _k  = \frac{{H_{k + m}  - H_k }}{m}\binom{{k + m}}{k}^{ - 1}, 
\end{equation*}
from the following identity~\cite[Equation (9.46)]{boyadzhiev18}:
\begin{equation*}
\sum_{k = 0}^n {( - 1)^k \binom{{n}}{k}\frac{{H_{k + m} }}{{k + m}}}  = \frac{{H_{n + m}  - H_n }}{m}\binom{{n + m}}{n}^{ - 1},\quad m\ne 0,
\end{equation*}
then from~\eqref{main1}, we obtain the following identity
\begin{align}\label{eq.awm96g7}
\sum_{k = 0}^n {( - 1)^{n - k} \binom{{n}}{k}\frac{{m\,H_{k + m} }}{{k + m}}\,t_{n - k} } = \sum_{k = 0}^n {( - 1)^k \binom{{n}}{k}\binom{{k + m}}{m}^{ - 1} \left( {H_{k + m}  - H_k } \right)\tau _{n - k} },
\end{align}
which holds for any binomial-transform pair of the first kind, $\{(t_k),(\tau_k)\}$, for every complex number $m$ that is not a non-positive number and every non-negative integer $n$.

In particular, since
\begin{equation}\label{q96hz7w}
\sum_{k = 0}^n {( - 1)^k \binom{{n}}{k}\frac{{G_{tk + r} }}{{L_t^k }}}  = \frac{{( - 1)^r }}{{L_t^n }}\left( {G_0 L_{tn - r}  - G_{tn - r} } \right),
\end{equation}
using
\begin{equation*}
t_k  = \frac{{G_{tk + r} }}{{L_t^k }},\quad\tau _k  = \frac{{( - 1)^r }}{{L_t^k }}\left(G_0L_{tk-r}-G_{tk-r}\right),
\end{equation*}
in~\eqref{eq.awm96g7} yields the following general Fibonacci-harmonic number identity:
\begin{align}
&\sum_{k = 0}^n {( - 1)^{n - k} \binom{{n}}{k}\frac{{m\,H_{k + m} }}{{k + m}}L_t^k G_{t(n - k) + r} },\nonumber\\
&\qquad  = \sum_{k = 0}^n {( - 1)^k \binom{{n}}{k}\binom{{k + m}}{m}^{ - 1} \left( {H_{k + m}  - H_k } \right)L_t^k \left( {G_0 L_{t(n - k) - r}  - G_{t(n - k) - r} } \right)}; 
\end{align}
and, in particular,
\begin{equation}
\sum_{k = 0}^n {( - 1)^{n - k} \binom{{n}}{k}\frac{{m }}{{k + m}}H_{k + m}F_{n - k} }  = \sum_{k = 0}^n {( - 1)^{k-1} \binom{{n}}{k}\binom{{k + m}}{m}^{ - 1} \left( {H_{k + m}  - H_k } \right)F_{n - k} } ,
\end{equation}
with the special value
\begin{equation}
\sum_{k = 0}^n {( - 1)^{n - k} \binom{{n}}{k}\frac{{H_{k + 1} F_{n - k} }}{{k + 1}}}  = \sum_{k = 0}^n {( - 1)^{k-1} \binom{{n}}{k}\frac{{F_{n - k} }}{{\left( {k + 1} \right)^2 }}} .
\end{equation}
\end{example}

\begin{example}
From~\eqref{eq.zw4rawn} and~\cite[Equation (10.1)]{boyadzhiev18}:
\begin{equation}
\sum_{k = 0}^n {( - 1)^k \binom{{n}}{k}\binom{{x}}{k}\binom{{y}}{k}^{ - 1} }  = \binom{{y - x}}{n}\binom{{y}}{n}^{ - 1},\quad y-n\not\in\mathbb Z^-,y\in\mathbb C,\label{eq.a45516v}
\end{equation}
we can choose
\begin{equation*}
s_k  = \binom{{y - k}}{x},\quad\sigma _k  = \binom{{y - k}}{{y - x}},
\end{equation*}
and
\begin{equation*}
t_k  = \binom{{u}}{k}\binom{{v}}{k}^{ - 1} ,\quad\tau _k  = \binom{{v - u}}{k}\binom{{v}}{k}^{ - 1}. 
\end{equation*}
Using these in~\eqref{main1}, we obtain the following identity
\begin{align}
&\sum_{k = 0}^n {( - 1)^{n - k} \binom{{n}}{k}\binom{{y - k}}{x}\binom{u}{{n - k}}\binom{{v}}{{n - k}}^{ - 1} }\nonumber \\
&\qquad = \sum_{k = 0}^n {( - 1)^k \binom{{n}}{k}\binom{{y - k}}{{y - x}}\binom{{v - u}}{{n - k}}\binom{{v}}{{n - k}}^{ - 1} } ,
\end{align}
which is valid for complex numbers $x$, $y$, $u$ and $v$ such that $v-n$ is not a negative integer.

In particular, at $v=n$, we get
\begin{equation}
\sum_{k = 0}^n {( - 1)^{n - k} \binom{{y - k}}{x}\binom{u}{{n - k}}}  = \sum_{k = 0}^n {( - 1)^k \binom{{y - k}}{{y - x}}\binom{{n - u}}{{n - k}}} ,
\end{equation}
with the special value
\begin{equation}
\sum_{k = 0}^n {( - 1)^k \binom nk\binom{{y - k}}{{y - x}}}=\binom{y-n}x ,
\end{equation}
which is the binomial transform of the first kind of~\eqref{eq.zw4rawn}.
\end{example}

\begin{example}\label{ex.omtqrl4}
From~\cite[Equation (9.37)]{boyadzhiev18}:
\begin{equation}
\sum_{k = 0}^n {( - 1)^k \binom{{n}}{k}H_{k + m} }  =  - \frac{1}{n}\binom{{n + m}}{m}^{ - 1},\quad n\ne0,\label{eq.wt34kp1}
\end{equation}
and~\cite[Equation (9.51)]{boyadzhiev18}:
\begin{equation}\label{eq.jupxl7y}
\sum_{k = 0}^n {( - 1)^k \binom{{n}}{k}O_k }  =  - \binom{{2n}}{n}^{ - 1} \frac{{2^{2n - 1} }}{n},
\end{equation}
we can identify
\begin{equation*}
s_k  = H_{k + m} ,\quad\sigma _k  =  \frac{{\delta _{k0}(1+H_m)-1 }}{{k + \delta _{k0} }}\binom{{k + m}}{m}^{ - 1},  
\end{equation*}
and
\begin{equation*}
t_k  = O_k ,\quad\tau _k  =  - \frac{{1 - \delta _{k0} }}{{k + \delta _{k0} }}\binom{{2k}}{k}^{ - 1} 2^{2k - 1}. 
\end{equation*}
Here and throughout this paper, $\delta_{ij}$ is Kronecker's delta having the value $1$ when $i=j$ and zero otherwise.

Using these in~\eqref{main1} leads to the following harmonic number identity:
\begin{align}
&\sum_{k = 0}^n {( - 1)^{n - k} \binom{{n}}{k}H_{k + m} O_{n - k} }\nonumber\\
&\qquad  = -\frac{H_m}n\binom{2n}n^{-1}2^{2n-1}+\sum_{k = 1}^{n - 1} {( - 1)^k \binom{{n}}{k}\frac{{2^{2\left( {n - k} \right) - 1} }}{{k\left( {n - k} \right)}}\binom{{k + m}}{m}^{ - 1} \binom{{2\left( {n - k} \right)}}{{n - k}}^{ - 1} } ,
\end{align}
which is valid for every positive integer $n$ and every complex number $m$ that is not a negative integer.

In particular, we have the following alternating binomial convolution of harmonic number and odd harmonic number:
\begin{align}
\sum_{k = 0}^n {( - 1)^{n - k} \binom{{n}}{k}H_k O_{n - k} }= \sum_{k = 1}^{n - 1} {( - 1)^k \binom{{n}}{k}\frac{{2^{2\left( {n - k} \right) - 1} }}{{k\left( {n - k} \right)}}\binom{{2\left( {n - k} \right)}}{{n - k}}^{ - 1} } .
\end{align}

\end{example}

\begin{corollary}
Let $n$ be a non-negative integer. If $\{(s_k), (\sigma_k)\}$ and $\{(t_k), (\tau_k)\}$, $k=0,1,2,\ldots$, are binomial-transform pairs of the first kind, then
\begin{equation}
\sum_{k = 0}^n {( - 1)^{n - k} \binom{{n}}{k}\sigma_k t_{n-k} }  = \sum_{k = 0}^n {( - 1)^k \binom{{n}}{k}s _k \tau _{n-k} }.
\end{equation}

\end{corollary}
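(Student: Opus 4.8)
The plan is to exploit the manifest symmetry of the defining relation~\eqref{first} for binomial-transform pairs of the first kind. The key observation is that the equivalence
\begin{equation*}
s_n  = \sum_{k = 0}^n {( - 1)^k \binom{{n}}{k}\sigma _k }\iff \sigma_n  = \sum_{k = 0}^n {( - 1)^k \binom{{n}}{k}s_k }
\end{equation*}
is completely symmetric in the two sequences: if $\{(s_k),(\sigma_k)\}$ is a binomial-transform pair of the first kind, then so is the pair obtained by interchanging the roles of the two sequences, namely $\{(\sigma_k),(s_k)\}$. This is the one structural fact on which everything hinges, and it requires no computation beyond reading off~\eqref{first}.

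First I would record this symmetry explicitly, observing that $\{(\sigma_k),(s_k)\}$ is a valid binomial-transform pair of the first kind. Then I would simply invoke Theorem~\ref{thm.main1}, but applied to the two pairs $\{(\sigma_k),(s_k)\}$ and $\{(t_k),(\tau_k)\}$ rather than to $\{(s_k),(\sigma_k)\}$ and $\{(t_k),(\tau_k)\}$. In identity~\eqref{main1} the original sequence of the first pair appears on the left and its transform on the right; substituting $\sigma$ for $s$ and $s$ for $\sigma$ therefore turns~\eqref{main1} into
\begin{equation*}
\sum_{k = 0}^n {( - 1)^{n - k} \binom{{n}}{k}\sigma_k t_{n-k} }  = \sum_{k = 0}^n {( - 1)^k \binom{{n}}{k}s_k \tau _{n-k} },
\end{equation*}
which is precisely the asserted identity.

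There is essentially no obstacle to overcome here; the entire content is the remark that the first-kind transform relation is its own inverse, so a pair and its swap are interchangeable as inputs to Theorem~\ref{thm.main1}. The only point requiring a moment's care is bookkeeping: one must keep the second pair $\{(t_k),(\tau_k)\}$ fixed while swapping only the first pair, and check that the positions of $s$ versus $\sigma$ and of $t$ versus $\tau$ in~\eqref{main1} match the claimed statement after the substitution. Once that alignment is verified, the corollary follows in a single line with no further argument.
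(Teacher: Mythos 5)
Your argument is correct and is exactly the intended one: the paper states this corollary without proof, relying on the same observation that the first-kind relation~\eqref{first} is symmetric, so $\{(\sigma_k),(s_k)\}$ is again a binomial-transform pair of the first kind and Theorem~\ref{thm.main1} applies with the roles of $s$ and $\sigma$ interchanged. Nothing is missing.
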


\begin{corollary}
Let $n$ be a non-negative integer. If $\{(s_k), (\sigma_k)\}$, $k=0,1,2,\ldots$, is a binomial-transform pair of the first kind, then
\begin{equation}\label{eq.ci4kck8}
\sum_{k = 0}^n {( - 1)^{n - k} \binom{{n}}{k}s_k s_{n-k} }  = \sum_{k = 0}^n {( - 1)^k \binom{{n}}{k}\sigma _k \sigma _{n-k} } .
\end{equation}
\end{corollary}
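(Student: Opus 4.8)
The plan is to obtain this corollary as an immediate specialization of Theorem~\ref{thm.main1}, exactly as the preceding corollary does. The key observation is that Theorem~\ref{thm.main1} places no restriction forcing the two input pairs $\{(s_k),(\sigma_k)\}$ and $\{(t_k),(\tau_k)\}$ to be distinct; they may be any two binomial-transform pairs of the first kind, including equal ones. Since $\{(s_k),(\sigma_k)\}$ is by hypothesis such a pair, I would simply set $(t_k)=(s_k)$ and $(\tau_k)=(\sigma_k)$ in the statement of the theorem.

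Carrying this out, the left-hand side of~\eqref{main1} becomes $\sum_{k=0}^n (-1)^{n-k}\binom nk s_k s_{n-k}$ and the right-hand side becomes $\sum_{k=0}^n (-1)^{k}\binom nk \sigma_k \sigma_{n-k}$, which is precisely~\eqref{eq.ci4kck8}. No further manipulation is required. There is no real obstacle to overcome: the entire content of the corollary is the recognition that the bilinear identity of Theorem~\ref{thm.main1} specializes to a ``diagonal'' quadratic identity when one feeds in a single pair twice, and the proof is therefore a one-line substitution.
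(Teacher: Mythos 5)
Your proof is correct and is exactly the intended argument: the corollary is the diagonal specialization $(t_k)=(s_k)$, $(\tau_k)=(\sigma_k)$ of Theorem~\ref{thm.main1}, which the paper states without further comment as an immediate consequence. Nothing is missing.
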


\begin{remark}
We deduce from~\eqref{eq.ci4kck8} that if $n$ is a non-negative integer, then
\begin{equation}\label{eq.pt7l41w}
\sum_{k = 0}^n {( - 1)^k \binom{{n}}{k}s_k s_{n-k} }  = \sum_{k = 0}^n {( - 1)^k \binom{{n}}{k}\sigma _k \sigma _{n-k} } ,
\end{equation}
with each sum being $0$ for odd $n$.
\end{remark}

\begin{example}
In view of the following identity~\cite{mathworld}:
\begin{equation}
\sum_{k = 0}^n {( - 1)^k \binom{{n}}{k}2^{ - k} \binom{{k}}{{\left\lfloor {k/2} \right\rfloor }}}  = 2^{ - n} C_n ,
\end{equation}
we can choose
\begin{equation}
s _k  = 2^{ - k} \binom{{k}}{{\left\lfloor {k/2} \right\rfloor }}, \quad\sigma_k  = 2^{ - k} C_k ,
\end{equation}
and use in~\eqref{eq.ci4kck8} to obtain
\begin{equation}\label{ubq9lia}
\sum_{k = 0}^n {( - 1)^k \binom{{n}}{k}\binom{{k}}{{\left\lfloor {k/2} \right\rfloor }}\binom{{n - k}}{{\left\lfloor {\left( {n - k} \right)/2} \right\rfloor }}}  = ( - 1)^n \sum_{k = 0}^n {( - 1)^k \binom{{n}}{k}C_k C_{n - k} } .
\end{equation}
Miki\'c~\cite{mikic} has shown that if $n$ is a non-negative integer, then
\begin{equation}\label{cjrojsy}
\sum_{k = 0}^n {( - 1)^k \binom{{n}}{k}C_k C_{n - k} }  
= \begin{cases}
 C_{n/2} \binom n{n/2},&\text{$n$ even}; \\ 
 0,&\text{$n$ odd}. \\ 
 \end{cases} 
\end{equation}
From~\eqref{ubq9lia} and~\eqref{cjrojsy}, we obtain
\begin{equation}
\sum_{k = 0}^n {( - 1)^k \binom{{n}}{k}\binom{{k}}{{\left\lfloor {k/2} \right\rfloor }}\binom{{n - k}}{{\left\lfloor {\left( {n - k} \right)/2} \right\rfloor }}}  =
\begin{cases}
 C_{n/2} \binom n{n/2},&\text{$n$ even}; \\ 
 0,&\text{$n$ odd}. \\ 
 \end{cases}   
\end{equation}

\end{example}
\begin{example}
Plugging $s_k$ and $\sigma_k$ from Example~\ref{ex.ur405iz} into~\eqref{eq.pt7l41w} yields, for $n$ a non-negative integer and $m$ a complex number that is not a negative integer,
\begin{align}
&\sum_{k = 0}^n {( - 1)^k \binom{{n}}{k}\frac{{m^2 H_{k + m} H_{n - k + m} }}{{\left( {k + m} \right)\left( {n - k + m} \right)}}}\nonumber\\
&\qquad  = \sum_{k = 0}^n {( - 1)^k \binom{{n}}{k}\binom{{k + m}}{k}^{ - 1} \binom{{n - k + m}}{{n - k}}^{ - 1} \left( {H_{k + m}  - H_k } \right)\left( {H_{n - k + m}  - H_{n - k} } \right)} ,
\end{align}
and, in particular,
\begin{equation}
\sum_{k = 0}^n {( - 1)^k \binom{{n}}{k}\frac{{H_{k + 1} H_{n - k + 1} }}{{\left( {k + 1} \right)\left( {n - k + 1} \right)}}}  = \sum_{k = 0}^n {\frac{{( - 1)^k }}{{\left( {k + 1} \right)^2 \left( {n - k + 1} \right)^2 }}\binom{{n}}{k}} .
\end{equation}
\end{example}
\begin{example}
From~\eqref{eq.a45516v}, with
\begin{equation*}
s_k  = \binom{{x}}{k}\binom{{y}}{k}^{ - 1} ,\quad\sigma _k  = \binom{{y - x}}{k}\binom{{y}}{k}^{ - 1}, 
\end{equation*}
identity~\eqref{eq.pt7l41w} gives
\begin{align}
&\sum_{k = 0}^n {( - 1)^k \binom{{n}}{k}\binom{{x}}{k}\binom{{x}}{{n - k}}\binom{y}{k}^{ - 1} \binom{{y}}{{n - k}}^{ - 1} } \nonumber\\
&\qquad = \sum_{k = 0}^n {( - 1)^k \binom{{n}}{k}\binom{{y - x}}{k}\binom{{y - x}}{{n - k}}\binom{{y}}{k}^{ - 1} \binom{{y}}{{n - k}}^{ - 1} } ,
\end{align}
which, upon setting $x=n$ and $y=n$, in turn, yields
\begin{align}\label{eq.hiez2vp}
&\sum_{k = 0}^n {( - 1)^k \binom{{n}}{k}^3 \binom{y}{k}^{ - 1} \binom{{y}}{{n - k}}^{ - 1} }\nonumber\\
&\qquad  = \sum_{k = 0}^n {( - 1)^k \binom{{n}}{k}\binom{{y - n}}{k}\binom{{y - n}}{{n - k}}\binom{{y}}{k}^{ - 1} \binom{{y}}{{n - k}}^{ - 1} } 
\end{align}
and
\begin{equation}\label{eq.u00e6qz}
\sum_{k = 0}^n {( - 1)^k \binom{{x}}{k}\binom{{x}}{{n - k}}\binom{{n}}{k}^{ - 1} }  = \sum_{k = 0}^n {( - 1)^k \binom{{n - x}}{k}\binom{{n - x}}{{n - k}}\binom{{n}}{k}^{ - 1} } .
\end{equation}
Setting $y=-1$ in~\eqref{eq.hiez2vp} gives, after some simplification,
\begin{equation*}
\sum_{k = 0}^n {( - 1)^k \binom{{n}}{k}^3 }  = \sum_{k = 0}^n {( - 1)^{n - k} \binom{{n}}{k}\binom{{n + k}}{k}\binom{{2n - k}}{{n - k}}} ,
\end{equation*}
the left hand side of which is known, Dixon's identity~\cite{dixon03}:
\begin{equation}\label{dixon}
\sum_{k = 0}^n {( - 1)^k \binom{{n}}{k}^3 }  
= \begin{cases}
 ( - 1)^{n/2} \binom{{n}}{{n/2}}\binom{{3n/2}}{n},&\text{if $n$ is even;} \\ 
 0,&\text{if $n$ is odd}.\\ 
 \end{cases} 
\end{equation}
We therefore obtain
\begin{equation}
\sum_{k = 0}^n {( - 1)^{n - k} \binom{{n}}{k}\binom{{n + k}}{k}\binom{{2n - k}}{{n - k}}}
= \begin{cases}
 ( - 1)^{n/2} \binom{{n}}{{n/2}}\binom{{3n/2}}{n},&\text{if $n$ is even;} \\ 
 0,&\text{if $n$ is odd}.\\ 
 \end{cases}  
\end{equation}
Similarly, setting $x=-1$ in~\eqref{eq.u00e6qz} gives
\begin{equation*}
\sum_{k = 0}^n {( - 1)^{n - k} \binom{{n}}{k}^{ - 1} }  = \sum_{k = 0}^n {(-1)^k\binom{{n}}{k}\frac{{\left( {n + 1} \right)^2 }}{{\left( {n - k + 1} \right)\left( {k + 1} \right)}}} ,
\end{equation*}
from which we get
\begin{equation*}
\sum_{k = 0}^n {( - 1)^k \binom{{n}}{k}\frac{1}{{\left( {n - k + 1} \right)\left( {k + 1} \right)}}}  = \frac{{1 + ( - 1)^n }}{{\left( {n + 1} \right)\left( {n + 2} \right)}},
\end{equation*}
upon using~\cite{sury04}:
\begin{equation*}
\sum_{k = 0}^n {( - 1)^k \binom{{n}}{k}^{ - 1} }  = \left( {1 + ( - 1)^n } \right)\frac{{n + 1}}{{n + 2}},
\end{equation*}
and hence,
\begin{equation*}
\sum_{k=0}^n{(-1)^k\binom nk\frac1{k+1}}=\frac1{n+1}.
\end{equation*}

\end{example}

A sequence $(s_k)$, $k=0,1,2,\ldots$, for which
\begin{equation*}
s_n  = \sum_{k = 0}^n {( - 1)^k \binom{{n}}{k}s_k },
\end{equation*}
for every non-negative integer $n$ will be called a self-inverse sequence, following Sun~\cite{sun01}.

A sequence $(s_k)$, $k=0,1,2,\ldots$, for which
\begin{equation*}
-s_n  = \sum_{k = 0}^n {( - 1)^k \binom{{n}}{k}s_k },
\end{equation*}
for every non-negative integer $n$ will be called an anti-self-inverse sequence.
Many self-inverse sequences are known (see e.g.~Sun~\cite{sun01}, Chen~\cite{chen07}, Boyadzhiev~\cite{boyadzhiev18}). Examples include $(L_k)$, $(kF_{k-1})$, $(\binom{x/2}k/\binom xk)$, $0\ne x\not\in\mathbb Z^+$, $(\binom{2k}k/2^{2k})$. Anti-self-inverse sequences include $(F_k)$ and $(H_k/(k+1))$. Self-inverse and anti-self-inverse sequences are discussed in more detail in Section~\ref{sec.invariant}.

\begin{corollary}\label{cor.rgyk46r}
Let $(s_k)$ and $(t_k)$, $k=0,1,2,\ldots$, be two sequences of complex numbers. Let $n$ be a non-negative integer. If both sequences are invariant or both are anti-self-inverse, then
\begin{equation}
\sum_{k = 0}^n {( - 1)^k \binom{{n}}{k}s_k t_{n - k} }  = 0,\text{ if $n$ is odd;}
\end{equation}
while if one of the sequences is invariant and the other is anti-self-inverse, then
\begin{equation}
\sum_{k = 0}^n {( - 1)^k \binom{{n}}{k}s_k t_{n - k} }  = 0,\text{ if $n$ is even.}
\end{equation}

\end{corollary}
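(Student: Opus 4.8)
The plan is to reduce everything to a single application of Theorem~\ref{thm.main1}. The key observation is that an invariant sequence is precisely one whose first-kind binomial transform equals itself, i.e.\ a pair of the form $\{(s_k),(s_k)\}$, while an inverse invariant sequence gives a pair $\{(s_k),(-s_k)\}$. In either case one checks directly from~\eqref{first} that these are genuine binomial-transform pairs of the first kind, so Theorem~\ref{thm.main1} is available.

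First I would abbreviate the target sum as $S=\sum_{k=0}^n(-1)^k\binom nk s_k t_{n-k}$ and rewrite the left-hand side of~\eqref{main1} by factoring $(-1)^{n-k}=(-1)^n(-1)^k$, which gives
\[
\sum_{k=0}^n(-1)^{n-k}\binom nk s_k t_{n-k}=(-1)^n S.
\]
Next I would evaluate the right-hand side of~\eqref{main1} case by case, substituting the appropriate relation between each sequence and its transform. If both sequences are invariant then $\sigma_k=s_k$ and $\tau_k=t_k$, so the right-hand side is again $S$; the same holds when both are inverse invariant, since the two sign changes $\sigma_k=-s_k$ and $\tau_k=-t_k$ cancel. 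If exactly one sequence is invariant and the other inverse invariant, then exactly one sign change survives and the right-hand side equals $-S$.

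Combining these with~\eqref{main1} yields the single relation $(-1)^n S=\varepsilon S$, where $\varepsilon=+1$ in the matching cases (both invariant or both inverse invariant) and $\varepsilon=-1$ in the mixed case. Reading off the parity: in the matching case the relation is automatic for even $n$, while for odd $n$ it reads $-S=S$, forcing $S=0$; in the mixed case $(-1)^n S=-S$ forces $S=0$ precisely when $n$ is even. This gives both assertions at once.

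There is no real obstacle here beyond careful sign bookkeeping; the only point that must be verified explicitly is that $\{(s_k),(s_k)\}$ and $\{(s_k),(-s_k)\}$ satisfy the defining involution~\eqref{first}, so that Theorem~\ref{thm.main1} genuinely applies. This is routine: in the invariant case both directions of~\eqref{first} coincide with the definition $s_n=\sum_{k=0}^n(-1)^k\binom nk s_k$, and in the inverse invariant case the extra minus sign appears on both sides of~\eqref{first} and cancels.
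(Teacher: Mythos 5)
Your proposal is correct and is essentially the argument the paper intends: the corollary is stated without an explicit proof, but it follows from Theorem~\ref{thm.main1} exactly as you describe, by recognizing invariant and inverse invariant sequences as first-kind pairs with $\sigma_k=s_k$ and $\sigma_k=-s_k$ respectively and comparing signs in~\eqref{main1}. Your sign bookkeeping, including the verification that $\{(s_k),(\pm s_k)\}$ genuinely satisfies~\eqref{first}, is accurate.
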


\begin{remark}
The results stated in Corollary~\ref{cor.rgyk46r} were also obtained by Wang~\cite{wang05}.
\end{remark}

\begin{example}
If $n$ is a non-negative odd integer and $x$ is a non-zero complex number such that $x-n$ is not a negative integer, then
\begin{equation}
\sum_{k = 0}^n {( - 1)^k \binom{{n}}{k}\binom{{x/2}}{k}\binom{{x}}{k}^{ - 1} L_{n - k} }  = 0.
\end{equation}
\end{example}

\section{Identities involving binomial-transform pairs of the second kind}\label{sec.second}

\begin{lemma}
Let $\left\{(\bar a_k),(\bar\alpha_k)\right\}$, $k=0,1,2,\ldots$, be a binomial-transform pair of the second kind. Let $\mathcal M_x$ be a linear operator defined by $\mathcal M_x(x^j)=\bar a_j$ for every complex number $x$ and every non-negative integer~$j$. Then $\mathcal M_x((1+x)^j)=\bar\alpha_j$.
\end{lemma}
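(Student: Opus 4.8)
The plan is to follow the template set by the proof of the first lemma, replacing the alternating expansion of $(1-x)^j$ with the all-positive expansion of $(1+x)^j$, so as to match the sign convention of the second kind of binomial transform. Recall that for a pair $\{(\bar a_k),(\bar\alpha_k)\}$ of the second kind, relation~\eqref{second} gives $\bar\alpha_n=\sum_{k=0}^n\binom nk\bar a_k$, with no alternating sign. This is precisely the coefficient pattern produced by the ordinary binomial theorem applied to $(1+x)^j$, which is why $(1+x)^j$ is the natural test function here, in contrast to $(1-x)^j$ in the first-kind lemma.

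First I would expand $(1+x)^j=\sum_{k=0}^j\binom jk x^k$ using the binomial theorem. Next, since $\mathcal M_x$ is linear, I would pull the operator through this finite sum and then apply the defining property $\mathcal M_x(x^k)=\bar a_k$ to each term. This produces $\sum_{k=0}^j\binom jk\bar a_k$, which by~\eqref{second} is exactly $\bar\alpha_j$, completing the argument.

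There is no genuine obstacle: the entire computation is a single application of linearity composed with the binomial theorem, and the only point requiring any care is confirming that the second-kind convention carries no alternating sign on the $\bar\alpha_n=\sum_k\binom nk\bar a_k$ side, so that $(1+x)^j$ (rather than $(1-x)^j$) is indeed the correct input. The proof is thus a near-verbatim analogue of the first lemma, and I would present it in the same four-line display format.
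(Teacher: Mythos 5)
Your proof is correct and is essentially identical to the paper's: expand $(1+x)^j$ by the binomial theorem, pull $\mathcal M_x$ through the finite sum by linearity, apply $\mathcal M_x(x^k)=\bar a_k$, and identify $\sum_{k=0}^j\binom jk\bar a_k$ with $\bar\alpha_j$ via the second-kind relation~\eqref{second}. No differences worth noting.
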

\begin{proof}
We have
\begin{align*}
\mathcal M_x\left( {\left( {1 + x} \right)^j } \right) = \mathcal M_x\left( {\sum_{k = 0}^j { \binom{{j}}{k}x^k } } \right) &= \sum_{k = 0}^j {\binom{{j}}{k}\mathcal M_x\left( {x^k } \right)} \\
& = \sum_{k = 0}^j {\binom{{j}}{k}\bar a_k }\\ 
&=\bar\alpha_j.
\end{align*}
\end{proof}

\begin{theorem}\label{thm.slg7tm1}
Let $n$ be a non-negative integer. If $\{(\bar s_k), (\bar\sigma_k)\}$ and $\{(\bar t_k), (\bar\tau_k)\}$, $k=0,1,2,\ldots$, are binomial-transform pairs of the second kind, then
\begin{equation}\label{main2}
\sum_{k = 0}^n {( - 1)^k \binom{{n}}{k}\bar s_k \bar t_{n-k} }  = \sum_{k = 0}^n {( - 1)^k \binom{{n}}{k}\bar\sigma _k \bar\tau _{n-k} } .
\end{equation}
\end{theorem}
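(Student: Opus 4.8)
The plan is to imitate the proof of Theorem~\ref{thm.main1}, replacing the first-kind operator $\mathcal L_x$ by the second-kind operator $\mathcal M_x$ of the preceding lemma. Recall that $\mathcal M_x$ sends $x^j$ to $\bar a_j$ and, by that lemma, sends $(1+x)^j$ to $\bar\alpha_j$. The whole task then reduces to exhibiting a polynomial identity in two complex variables $x$ and $y$ whose two sides, after the successive application of two such operators, collapse exactly onto the two sides of~\eqref{main2}.

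First I would write down the seed identity
\begin{equation*}
\sum_{k=0}^n (-1)^k \binom nk y^k x^{n-k} = \sum_{k=0}^n (-1)^k \binom nk (1+y)^k (1+x)^{n-k},
\end{equation*}
which holds because, by the binomial theorem, each side equals $(x-y)^n$: the left side is $(x-y)^n$ directly, while the right side is $\bigl((1+x)-(1+y)\bigr)^n=(x-y)^n$. Note the deliberate choice of the shifted monomials $(1+x)$ and $(1+y)$ on the right, which is what matches the second-kind lemma.

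Next I would let $\mathcal M_x$ be the second-kind operator with $\mathcal M_x(x^j)=\bar t_j$, so that $\mathcal M_x\bigl((1+x)^j\bigr)=\bar\tau_j$, and apply it term by term (using linearity) to both sides. Treating $(-1)^k\binom nk y^k$ and $(-1)^k\binom nk (1+y)^k$ as constants with respect to $\mathcal M_x$, this produces
\begin{equation*}
\sum_{k=0}^n (-1)^k \binom nk y^k \bar t_{n-k} = \sum_{k=0}^n (-1)^k \binom nk (1+y)^k \bar\tau_{n-k}.
\end{equation*}
I would then introduce a second operator $\mathcal M_y$ with $\mathcal M_y(y^j)=\bar s_j$, whence $\mathcal M_y\bigl((1+y)^j\bigr)=\bar\sigma_j$, and apply it to both sides; the monomials $y^k$ become $\bar s_k$ on the left and $(1+y)^k$ becomes $\bar\sigma_k$ on the right, yielding precisely~\eqref{main2}.

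I do not anticipate a genuine obstacle, since the second-kind lemma has already done the substantive work; the only point requiring care is the bookkeeping, namely selecting the correct seed identity so that both operators act cleanly and the signs align. The one idea that drives the whole scheme is that the single quantity $(x-y)^n$ admits two binomial expansions—one in the monomials $x^{n-k}$, $y^k$ and one in the shifted monomials $(1+x)^{n-k}$, $(1+y)^k$—which is exactly the dichotomy that the pair of operators $\mathcal M_x$, $\mathcal M_y$ is designed to convert into the two sides of the claimed convolution identity.
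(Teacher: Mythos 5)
Your proposal is correct and is essentially the paper's own proof: the paper also starts from the identity $\sum_k(-1)^k\binom nk y^{n-k}x^k=\sum_k(-1)^k\binom nk(1+y)^{n-k}(1+x)^k$ (both sides being $(x-y)^n$) and applies the operators $\mathcal M_x$ and $\mathcal M_y$ in succession, invoking the second-kind lemma exactly as you do. The only cosmetic difference is that you index the seed identity so as to avoid the paper's intermediate reindexing step $k\to n-k$.
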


\begin{proof}
Write~\eqref{eq.fltsd62} as
\begin{equation}\label{eq.ch50dhg}
\sum_{k = 0}^n {( - 1)^k \binom{{n}}{k} y^{n - k}x^k }  = \sum_{k = 0}^n {( - 1)^k \binom{{n}}{k} \left( {1 + y} \right)^{n - k} \left( {1 + x} \right)^k} .
\end{equation}
Let $(\bar t_j)$, $j=0,1,2,\ldots$, be a sequence of complex numbers. Let $\mathcal M_x(x^j)=\bar t_j$ .

Operate on both sides of~\eqref{eq.ch50dhg} with $\mathcal M_x$ to obtain
\begin{equation*}
\sum_{k = 0}^n {( - 1)^k \binom{{n}}{k} y^{n - k}\bar t_k }  = \sum_{k = 0}^n {( - 1)^k \binom{{n}}{k} \left( {1 + y} \right)^{n - k} \bar\tau _k} ,
\end{equation*}
where
\begin{equation*}
\bar\tau _k  = \mathcal M_x((1+x)^k)=\sum_{i = 0}^k {\binom{{k}}{i}\bar t_i }.
\end{equation*}
Thus,
\begin{equation}\label{eq.cr8jcr4}
\sum_{k = 0}^n {( - 1)^k \binom{{n}}{k}y^k\bar t_{n - k}  }  = \sum_{k = 0}^n {( - 1)^k \binom{{n}}{k}\left( {1 + y} \right)^k\bar\tau _{n - k} } .
\end{equation}
Let $(\bar s_j)$, $j=0,1,2,\ldots$, be a sequence of complex numbers. Let $\mathcal M_y(y^j)=\bar s_j$. The action of $\mathcal M_y$ on~\eqref{eq.cr8jcr4} produces
\begin{equation*}
\sum_{k = 0}^n {( - 1)^k \binom{{n}}{k}\bar s_k\bar t_{n - k}  }  = \sum_{k = 0}^n {( - 1)^k \binom{{n}}{k}\bar\sigma_k\bar\tau _{n - k} } ,
\end{equation*}
where
\begin{equation*}
\bar\sigma _k  = \mathcal M_y((1+y)^k)=\sum_{i = 0}^k { \binom{{k}}{i}\bar s_i },
\end{equation*}
and the proof is complete.
\end{proof}

\begin{example}
Consider the general recurrence relation for Bernoulli polynomials:
\begin{equation}\label{eq.dyudhfw}
\frac{{B_n \left( {x + y} \right)}}{{y^n }} = \sum_{k = 0}^n {\binom{{n}}{k}\frac{{B_k (x)}}{{y^k }}}.
\end{equation}
If we choose
\begin{equation*}
\bar s_k  = \frac{{B_k (x)}}{{y^k }},\quad\bar \sigma _k  = \frac{{B_k \left( {x + y} \right)}}{{y^k }},
\end{equation*}
in~\eqref{main2}, we obtain
\begin{equation}\label{eq.bf4tiok}
\sum_{k = 0}^n {( - 1)^k \binom{{n}}{k}y^k B_{n - k} (x)\bar t_k }  = \sum_{k = 0}^n {( - 1)^k \binom{{n}}{k}y^k B_{n - k} \left( {x + y} \right)\bar \tau _k } ,
\end{equation}
which is valid for complex variables $x$ and $y$ and any binomial transform pair $\{(\bar s_k),(\bar\sigma_k)\}$ of the second kind.

Many results can be derived from~\eqref{eq.bf4tiok}. To begin with, we have the following polynomial identity:
\begin{equation}
\sum_{k = 0}^n {( - 1)^k \binom{{n}}{k}y^k B_{n - k} (x)t^k }  = \sum_{k = 0}^n {( - 1)^k \binom{{n}}{k}y^k B_{n - k} \left( {x + y} \right)\left( {1 + t} \right)^k } ,
\end{equation}
which holds for complex numbers $x$, $y$ and $t$.

Choosing
\begin{equation*}
\bar t_k  = \frac{{B_k (z)}}{{w^k }},\quad\bar \tau _k  = \frac{{B_k \left( {z + w} \right)}}{{w^k }},
\end{equation*}
in~\eqref{eq.bf4tiok} leads to
\begin{align}
&\sum_{k = 0}^n {( - 1)^k \binom{{n}}{k}\left( {y/w} \right)^k B_{n - k} (x)B_k (z)}\nonumber\\
&\qquad  = \sum_{k = 0}^n {( - 1)^k \binom{{n}}{k}\left( {y/w} \right)^k B_{n - k} \left( {x + y} \right)B_k \left( {z + w} \right)} ,
\end{align}
giving, in particular,
\begin{align}
\sum_{k = 0}^n {( - 1)^k \binom{{n}}{k}B_{n - k} (x)B_k (z)}  = \sum_{k = 0}^n {( - 1)^k \binom{{n}}{k}B_{n - k} \left( {x + w} \right)B_k \left( {z + w} \right)} ,
\end{align}
and
\begin{equation}\label{eq.zc4ufo6}
\sum_{k = 0}^n {( - 1)^k \binom{{n}}{k}\left( {y/w} \right)^k B_{n - k} \left( {y} \right)B_k \left( {w} \right)}=\sum_{k = 0}^n {( - 1)^k \binom{{n}}{k}\left( {y/w} \right)^k B_{n - k}B_k}.
\end{equation}

\end{example}

\begin{proposition}
If $n$ is a positive odd integer and $y$ and $w$ are complex numbers, then
\begin{equation}
\sum_{k = 0}^n {( - 1)^k \binom{{n}}{k}\left( {\frac{y}{w}} \right)^k B_{n - k} (y)B_k (w)}  = \frac{{ny}}{{2w}}\left( {1 - \left( {\frac{y}{w}} \right)^{n - 2} } \right)B_{n - 1} .
\end{equation}

\end{proposition}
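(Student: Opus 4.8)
The plan is to start from the identity~\eqref{eq.zc4ufo6}, derived just above, which reduces the left-hand side (a convolution of Bernoulli \emph{polynomials}) to a convolution of ordinary Bernoulli \emph{numbers}. Writing $r=y/w$ for brevity, \eqref{eq.zc4ufo6} gives
\[
\sum_{k=0}^n (-1)^k \binom{n}{k} r^k B_{n-k}(y) B_k(w) = \sum_{k=0}^n (-1)^k \binom{n}{k} r^k B_{n-k} B_k,
\]
so it suffices to evaluate the right-hand sum $S:=\sum_{k=0}^n (-1)^k \binom{n}{k} r^k B_{n-k} B_k$ under the hypothesis that $n$ is odd.

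The key step is a parity argument exploiting the vanishing of the Bernoulli numbers at odd indices. Since $n$ is odd, in every term of $S$ exactly one of the indices $k$ and $n-k$ is odd and the other is even. Recalling that $B_1=-\tfrac12$ while $B_{2j+1}=0$ for every $j\ge1$, the product $B_{n-k}B_k$ can be nonzero only when the odd index equals $1$; that is, only the terms with $k=1$ and with $k=n-1$ survive.

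I would then compute these two surviving terms. The term $k=1$ contributes $(-1)\binom{n}{1} r\,B_{n-1}B_1=\tfrac{nr}{2}B_{n-1}$, while the term $k=n-1$ contributes $(-1)^{n-1}\binom{n}{n-1} r^{n-1} B_1 B_{n-1}=-\tfrac{n r^{n-1}}{2}B_{n-1}$, where $(-1)^{n-1}=1$ because $n$ is odd. Summing,
\[
S=\frac{nr}{2}\bigl(1-r^{n-2}\bigr)B_{n-1}=\frac{ny}{2w}\Bigl(1-\bigl(\tfrac{y}{w}\bigr)^{n-2}\Bigr)B_{n-1},
\]
which is exactly the asserted identity.

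I do not expect a serious obstacle here; the only delicate points are bookkeeping. First, one must account for the endpoint indices $k=0$ and $k=n$: for odd $n\ge3$ they vanish because $B_n=0$, so the computation above is complete, whereas for $n=1$ the indices $k=0$ and $k=n-1$ coincide and the two surviving terms reduce correctly. In this regard the two-term form $\tfrac{nr}{2}B_{n-1}-\tfrac{nr^{n-1}}{2}B_{n-1}$ is valid for every odd $n\ge1$, while the factored expression $\tfrac{nr}{2}(1-r^{n-2})$ must be read in that limiting sense when $n=1$, since $n-2=-1$ there. Second, one should confirm that no odd index exceeding $1$ contributes, which is precisely the statement $B_{2j+1}=0$ for $j\ge1$.
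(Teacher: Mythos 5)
Your proof is correct and follows essentially the same route as the paper: both start from~\eqref{eq.zc4ufo6} and then use the vanishing of the odd-index Bernoulli numbers beyond $B_1$ to isolate the $k=1$ and $k=n-1$ terms. You actually carry out the final two-term computation and the $n=1$ edge case more explicitly than the paper does, but the underlying argument is identical.
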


\begin{proof}
This is a consequence of the fact that the right-hand side of~\eqref{eq.zc4ufo6} can be written as
\begin{equation*}
\sum_{k = 0}^{\left\lfloor {n/2} \right\rfloor } {\binom{{n}}{{2k}}(y/w)^{2k} B_{n - 2k} B_{2k} }  - \sum_{k = 1}^{\left\lceil {n/2} \right\rceil } {\binom{{n}}{{2k - 1}}(y/w)^{2k - 1} B_{n - 2k + 1} B_{2k - 1} }, 
\end{equation*}
and the fact that $B_j$ is zero for odd $j$ greater than unity.
\end{proof}

\begin{example}
Using the binomial-transform pairs
\begin{equation}\label{munb79m}
\bar s_k  = \binom{{x}}{k},\quad\bar \sigma_k  = \binom{{x + k}}{k},
\end{equation}
obtained from~\cite[Equation (10.10d)]{boyadzhiev18}:
\begin{equation}
\sum_{k = 0}^n {\binom{{n}}{k}\binom{{x}}{k}}  = \binom{{n + x}}{n}\label{eq.s21u1g9}
\end{equation}
and
\begin{equation}
\bar t_k  = B_k ,\quad\bar \tau _k  = ( - 1)^k B_k,
\end{equation}
found from the recurrence relation
\begin{equation*}
\sum_{k = 0}^n {\binom nkB_k }  = ( - 1)^n B_n,
\end{equation*}
in~\eqref{main2} gives
\begin{equation}\label{zc3ejk3}
\sum_{k = 0}^n {( - 1)^{n - k} \binom{{n}}{k}\binom{{x}}{k}B_{n - k} }  = \sum_{k = 0}^n {\binom{{n}}{k}\binom{{x + k}}{k}B_{n - k} } ,
\end{equation}
which holds for every non-negative integer $n$ and every complex number $x$.
\end{example}

\begin{proposition}
If $n$ is a non-negative integer and $x$ is a complex number such that $x-n$ is not a negative integer, then
\begin{equation}\label{numg9mq}
\sum_{k = 0}^n {( - 1)^{n - k} \binom{{n}}{k}\binom{{x}}{k}\left( {H_x  - H_{x - k} } \right)B_{n - k} }  = \sum_{k = 0}^n {\binom{{n}}{k}\binom{{x + k}}{k}\left( {H_{x + k}  - H_x } \right)B_{n - k} } .
\end{equation}
\end{proposition}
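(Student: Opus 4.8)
The plan is to recognize identity~\eqref{numg9mq} as the derivative of~\eqref{zc3ejk3} with respect to the complex parameter $x$. Since the generalized binomial coefficients are analytic in $x$ (away from the poles of the relevant Gamma functions), identity~\eqref{zc3ejk3} holds as an analytic identity in $x$ on the domain where $x-n$ is not a negative integer, so both sides may be differentiated with respect to $x$. The coefficients $\binom{n}{k}$ and the Bernoulli numbers $B_{n-k}$ are constants in $x$, so only the factors $\binom{x}{k}$ and $\binom{x+k}{k}$ contribute to the derivative.

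The key computation I would carry out first is the logarithmic derivative of the generalized binomial coefficient. Writing $\binom{x}{k}=\Gamma(x+1)/[\Gamma(k+1)\Gamma(x-k+1)]$, one finds
\begin{equation*}
\frac{d}{dx}\log\binom{x}{k}=\psi(x+1)-\psi(x-k+1),
\end{equation*}
where $\psi$ denotes the digamma function. With the generalized harmonic number $H_x:=\psi(x+1)+\gamma$ (which reduces to the ordinary harmonic number when $x$ is a non-negative integer), the right-hand side equals $H_x-H_{x-k}$, and hence
\begin{equation*}
\frac{d}{dx}\binom{x}{k}=\binom{x}{k}\left(H_x-H_{x-k}\right).
\end{equation*}
An identical computation for $\binom{x+k}{k}=\Gamma(x+k+1)/[\Gamma(k+1)\Gamma(x+1)]$ gives
\begin{equation*}
\frac{d}{dx}\binom{x+k}{k}=\binom{x+k}{k}\left(H_{x+k}-H_x\right).
\end{equation*}

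With these two formulas in hand, I would differentiate~\eqref{zc3ejk3} term by term. The left-hand side produces $\sum_{k=0}^n(-1)^{n-k}\binom{n}{k}\binom{x}{k}(H_x-H_{x-k})B_{n-k}$, and the right-hand side produces $\sum_{k=0}^n\binom{n}{k}\binom{x+k}{k}(H_{x+k}-H_x)B_{n-k}$, which together are precisely the two sides of~\eqref{numg9mq}. The main obstacle, and really the only substantive point, is establishing the two derivative formulas correctly and recording that the symbols $H_x$, $H_{x-k}$, and $H_{x+k}$ must be read as generalized harmonic numbers defined through the digamma function; once that interpretation is fixed, the remainder is a routine differentiation justified by the analyticity of~\eqref{zc3ejk3} in $x$.
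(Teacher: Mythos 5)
Your proposal is correct and follows essentially the same route as the paper: the paper's proof likewise differentiates~\eqref{zc3ejk3} with respect to $x$ using $\frac{d}{dx}\binom{x+a}{k}=\binom{x+a}{k}\left(H_{x+a}-H_{x+a-k}\right)$. Your added care in deriving that formula from the digamma function and in noting that $H_x$ must be read as the generalized harmonic number is a welcome elaboration, but it does not change the argument.
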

In particular,
\begin{equation}\label{s8xzyeq}
\sum_{k = 0}^n {\binom{{n}}{k}\binom{{2k}}{k}2^{ - 2k} O_k B_{n - k} }  = 0,\quad\text{$n$ an even integer}.
\end{equation}
\begin{proof}
Differentiate~\eqref{zc3ejk3} wth respect to $x$, using
\begin{equation*}
\frac{d}{{dx}}\binom{{x + a}}{k} = \binom{{x + a}}{k}\left( {H_{x + a}  - H_{x + a - k} } \right).
\end{equation*}
In deriving~\eqref{s8xzyeq}, we set $x=-1/2$ in~\eqref{numg9mq} and used
\begin{equation}\label{kblsbvx}
H_{k-1/2}=2O_k-2\ln 2,
\end{equation}
\begin{equation}\label{duwnvoj}
\binom{{ - 1/2}}{k} = ( - 1)^k \binom{{2k}}{k}2^{ - 2k} ,
\end{equation}
and
\begin{equation}\label{zx8842q}
\binom{{k - 1/2}}{k} = \binom{{2k}}{k}2^{ - 2k} .
\end{equation}
\end{proof}
\begin{corollary}
Let $n$ be a non-negative integer. If $\{(\bar s_k), (\bar\sigma_k)\}$ is a binomial-transform pair of the second kind, then
\begin{equation}\label{s12ccud}
\sum_{k = 0}^n {( - 1)^k \binom{{n}}{k}\bar s_k \bar s_{n-k} }  = \sum_{k = 0}^n {( - 1)^k \binom{{n}}{k}\bar\sigma _k \bar\sigma _{n-k} } .
\end{equation}
\end{corollary}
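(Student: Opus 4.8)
The plan is to obtain this corollary as an immediate specialization of Theorem~\ref{thm.slg7tm1}. The theorem, as recorded in~\eqref{main2}, asserts that for any two binomial-transform pairs of the second kind $\{(\bar s_k),(\bar\sigma_k)\}$ and $\{(\bar t_k),(\bar\tau_k)\}$ one has
\begin{equation*}
\sum_{k = 0}^n {( - 1)^k \binom{{n}}{k}\bar s_k \bar t_{n-k} }  = \sum_{k = 0}^n {( - 1)^k \binom{{n}}{k}\bar\sigma _k \bar\tau _{n-k} } .
\end{equation*}
The only thing I would do is take the second pair to coincide with the first, that is, set $(\bar t_k)=(\bar s_k)$ and correspondingly $(\bar\tau_k)=(\bar\sigma_k)$. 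This substitution is legitimate because $\{(\bar s_k),(\bar\sigma_k)\}$ is itself a valid second-kind pair, so it may play the role of either factor in~\eqref{main2}. Making the replacement on both sides turns $\bar s_k\bar t_{n-k}$ into $\bar s_k\bar s_{n-k}$ and $\bar\sigma_k\bar\tau_{n-k}$ into $\bar\sigma_k\bar\sigma_{n-k}$, which is exactly the claimed identity~\eqref{s12ccud}.

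No genuine obstacle arises here; the argument is a one-line diagonal specialization and requires no new computation. The only point worth a moment's care is purely notational: one must confirm that the hypothesis of Theorem~\ref{thm.slg7tm1} is met when the two input pairs are identical, and indeed a single second-kind pair trivially satisfies the relation~\eqref{second} with itself, so the theorem applies verbatim. Thus the corollary follows directly, in complete parallel to the way~\eqref{eq.ci4kck8} was deduced from Theorem~\ref{thm.main1} in the first-kind setting.
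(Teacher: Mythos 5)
Your proof is correct and is exactly the intended derivation: the paper presents this corollary without explicit proof, as the immediate specialization of Theorem~\ref{thm.slg7tm1} obtained by taking $(\bar t_k)=(\bar s_k)$ and $(\bar\tau_k)=(\bar\sigma_k)$, just as you do.
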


\begin{example}
Using, in~\eqref{s12ccud}, the $\bar s_k$ and $\bar\sigma_k$ given in~\eqref{munb79m}, we have
\begin{equation}
\sum_{k = 0}^n {( - 1)^k \binom{{n}}{k}\binom{{x}}{k}\binom{{x}}{{n - k}}}  = \sum_{k = 0}^n {( - 1)^k \binom{{n}}{k}\binom{{x + k}}{k}\binom{{x + n - k}}{{n - k}}} ;
\end{equation}
which holds for every non-negative integer $n$ and every complex number $x$.
\end{example}

\begin{example}
From~\cite[Equation (9.62)]{boyadzhiev18}:
\begin{equation}
\sum_{k = 0}^n {\binom{{n}}{k}\binom{{m}}{k}H_k }  = \binom{{n + m}}{m}\left( {H_m  + H_n  - H_{m + n} } \right)\label{eq.mwv0ueq},
\end{equation}

we identify the following binomial-transform pair of the second kind:
\begin{equation}\label{fkgvrgi}
\bar s_k  = \binom{{m}}{k}H_k ,\quad\bar \sigma _k  = \binom{{k + m}}{m}\left( {H_m  + H_k  - H_{k + m} } \right),
\end{equation}
which when used in~\eqref{s12ccud} gives
\begin{align}
&\sum_{k = 0}^n {( - 1)^k \binom{{n}}{k}\binom{{m}}{k}\binom{{m}}{{n - k}}H_k H_{n - k} }\nonumber\\
&\qquad  = \sum_{k = 0}^n {( - 1)^k \binom{{n}}{k}\binom{{k + m}}{m}\binom{{n - k + m}}{m}\left( {H_m  + H_k  - H_{k + m} } \right)\left( {H_m  + H_{n - k}  - H_{n - k + m} } \right)} ,
\end{align}
which is valid for every non-negative integer $n$ and every complex number $m$ that is not a negative integer.
\end{example}
\begin{theorem}\label{thm.njtayqa}
Let $n$ be a non-negative integer. If $\{(\bar s_k), (\bar\sigma_k)\}$ and $\{(\bar t_k), (\bar\tau_k)\}$, $k=0,1,2,\ldots$, are binomial-transform pairs of the second kind, then
\begin{equation}\label{o5suprg}
\sum_{k = 0}^n {\binom{{n}}{k}\bar s_k \bar\tau_{n-k} }  = \sum_{k = 0}^n {\binom{{n}}{k}\bar\sigma _k \bar t _{n-k} } .
\end{equation}
\end{theorem}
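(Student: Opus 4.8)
The plan is to imitate the operator-theoretic proof of Theorem~\ref{thm.slg7tm1}, but this time starting from a \emph{trivial} polynomial identity rather than from~\eqref{eq.fltsd62}. The whole point is to find an identity in two complex variables $x$ and $y$ that collapses to the desired statement after two linear operators are applied. To that end, I would begin with
\begin{equation*}
\sum_{k = 0}^n {\binom{{n}}{k} y^k (1 + x)^{n - k} }  = (1 + x + y)^n  = \sum_{k = 0}^n {\binom{{n}}{k} (1 + y)^k x^{n - k} },
\end{equation*}
where the equalities are immediate from the binomial theorem, both sides being $\left((1+x)+y\right)^n$ and $\left(x+(1+y)\right)^n$, respectively. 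Note that each side carries exactly one factor that is a pure power of a variable and one factor that is a power of $1$ plus that variable, with the two sides exchanging these roles. This is precisely the structure that the second-kind lemma is built to exploit.

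Next I would introduce the operator $\mathcal M_x$ defined by $\mathcal M_x(x^j)=\bar t_j$, so that, by the second-kind lemma, $\mathcal M_x\bigl((1+x)^j\bigr)=\bar\tau_j$. Operating with $\mathcal M_x$ on the displayed identity (treating $y$ as a constant and using linearity) turns $x^{n-k}$ into $\bar t_{n-k}$ and $(1+x)^{n-k}$ into $\bar\tau_{n-k}$, producing
\begin{equation*}
\sum_{k = 0}^n {\binom{{n}}{k} y^k \bar\tau_{n - k} }  = \sum_{k = 0}^n {\binom{{n}}{k} (1 + y)^k \bar t_{n - k} }.
\end{equation*}
I would then introduce a second operator $\mathcal M_y$ with $\mathcal M_y(y^j)=\bar s_j$, whence $\mathcal M_y\bigl((1+y)^j\bigr)=\bar\sigma_j$ by the same lemma. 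Operating with $\mathcal M_y$ on the last display (now $\bar\tau_{n-k}$ and $\bar t_{n-k}$ are the constants) sends $y^k$ to $\bar s_k$ and $(1+y)^k$ to $\bar\sigma_k$, yielding exactly~\eqref{o5suprg}.

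There is essentially no hard step here: the entire content lies in choosing the right two-variable seed identity. The only thing requiring care is bookkeeping, namely keeping straight which operator acts on which variable and applying the second-kind lemma (rather than the first-kind one used in Theorem~\ref{thm.main1}) so that powers of $1+x$ and $1+y$ become the unsigned transforms $\bar\tau_j$ and $\bar\sigma_j$ with the correct absence of alternating signs. Once the seed identity $\sum_k\binom nk y^k(1+x)^{n-k}=\sum_k\binom nk(1+y)^k x^{n-k}$ is written down, both operators pass through by linearity and the result is forced.
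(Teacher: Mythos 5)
Your proof is correct and is essentially the paper's own argument: the seed identity you write down is exactly the paper's equation~\eqref{eq.te268n7} with the summation index reversed (both sides being $(1+x+y)^n$), and the subsequent application of $\mathcal M_x$ and $\mathcal M_y$ is precisely how the paper "proceeds as in the proof of Theorem~\ref{thm.slg7tm1}."
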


\begin{proof}
Consider the following variation on~\eqref{eq.ch50dhg}:
\begin{equation}\label{eq.te268n7}
\sum_{k = 0}^n {\binom{{n}}{k}y^{n - k} \left( {1 + x} \right)^k }  = \sum_{k = 0}^n {\binom{{n}}{k}\left( {1 + y} \right)^{n - k} x^k } ,
\end{equation}
and proceed as in the proof of Theorem~\ref{thm.slg7tm1}.
\end{proof}

\begin{example}
Using the binomial-transform pairs of the second kind
\begin{equation*}
\bar s_k  = \binom{{m}}{k}H_k ,\quad\bar \sigma _k  = \binom{{k + m}}{m}\left( {H_m  + H_k  - H_{k + m} } \right), \text{ Equation~\eqref{fkgvrgi}},
\end{equation*}
and
\begin{equation*}
\bar t_k  = \binom{{x}}{k},\quad\bar \tau_k  = \binom{{x + k}}{k},\text{ Equation~\eqref{munb79m} relabelled},
\end{equation*}
in~\eqref{o5suprg} yields
\begin{equation}\label{zrnqrxn}
\sum_{k = 0}^n {\binom{{n}}{k}\binom{{m}}{k}\binom{{x + n - k}}{{n - k}}H_k }  = \sum_{k = 0}^n {\binom{{n}}{k}\binom{{x}}{{n - k}}\binom{{k + m}}{m}\left( {H_m  + H_k  - H_{k + m} } \right)} .
\end{equation}
\end{example}

\begin{example}
Consider the following identity~\cite{trif00}:
\begin{equation}\label{foz9j01}
\sum_{k = 0}^n {\binom{{n}}{k}\binom{{m}}{{p + k}}^{ - 1} }  = \frac{{m + 1}}{{m - n + 1}}\binom{{m - n}}{p}^{ - 1}, 
\end{equation}
which holds for every non-negative integer $n$ and complex numbers $m$ and $p$ for which $m-n-p$ is not a negative integer.

Choosing
\begin{equation}
\bar s_k  = \binom{{m}}{{p + k}}^{ - 1} ,\quad\bar \sigma _k  = \frac{{m + 1}}{{m - k + 1}}\binom{{m - k}}{p}^{ - 1}, 
\end{equation}
in~\eqref{o5suprg} gives
\begin{equation*}
\sum_{k = 0}^n {\binom{{n}}{k}\binom{{m}}{{p + k}}^{ - 1} \bar \tau _{n - k} }  = \sum_{k = 0}^n {\binom{{n}}{k}\frac{{m + 1}}{{m - k + 1}}\binom{{m - k}}{p}^{ - 1} t_{n - k} } ,
\end{equation*}
which can also be written as
\begin{equation}\label{a2qzajx}
\sum_{k = 0}^n {\binom{{n}}{k}\binom{{m}}{{p + n - k}}^{ - 1} \bar \tau _k }  = \sum_{k = 0}^n {\binom{{n}}{k}\frac{{m + 1}}{{m - n + k + 1}}\binom{{m - n + k}}{p}^{ - 1} t_k } .
\end{equation}
\end{example}

\begin{example}
Differentiating~\eqref{foz9j01} with respect to $m$ using
\begin{equation*}
\frac{d}{{dm}}\binom{{m}}{{p + k}}^{ - 1}  = \binom{{m}}{{p + k}}^{ - 1} \left( {H_{m - p - k}  - H_m } \right)
\end{equation*}
gives
\begin{align}\label{cd4s9t9}
&\sum_{k = 0}^n {\binom{{n}}{k}\binom{{m}}{{p + k}}^{ - 1} \left( {H_{m - p - k}  - H_m } \right)}\nonumber\\ 
&\qquad = \frac{{ - n}}{{\left( {m - n + 1} \right)^2 }}\binom{{m - n}}{p}^{ - 1}  - \frac{{m + 1}}{{m - n + 1}}\binom{{m - n}}{p}^{-1}\left( {H_{m - n}  - H_{m - p - n} } \right),
\end{align}
for $m$ and $p$ complex numbers such that $m-n-p\ne\mathbb Z^{-}$, which at $p=0$ gives
\begin{equation}\label{evuuti7}
\sum_{k = 0}^n {\binom{{n}}{k}\binom{{m}}{k}^{ - 1} \left( {H_{m - k}  - H_m } \right)}  = \frac{{ - n}}{{\left( {m - n + 1} \right)^2 }},
\end{equation}
which yields
\begin{equation*}
\sum_{k = 0}^n {\binom{{n}}{k}\binom{{m}}{k}^{ - 1} H_{m - k} }  = \frac{{m + 1}}{{m - n + 1}}H_m  - \frac{n}{{\left( {m - n + 1} \right)^2 }},
\end{equation*}
of which the well-known identity
\begin{equation}\label{qqzbh28}
\sum_{k = 0}^n {H_k }  = \left( {n + 1} \right)H_n  - n,
\end{equation}
is a particular case.

\end{example}

\begin{proposition}
If $n$ is a non-negative integer, then
\begin{equation}
\sum_{k = 0}^n {( - 1)^k \binom{{n}}{k}\binom{{2k}}{k}^{ - 1} 2^{2k} O_k }  = \frac{{ - 2n}}{{\left( {2n - 1} \right)^2 }}.
\end{equation}

\end{proposition}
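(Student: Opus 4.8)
The plan is to obtain the proposition as a single specialization of~\eqref{evuuti7} at $m=-1/2$. First I would check admissibility: the constraint inherited from~\eqref{foz9j01} (taken at $p=0$) only requires $m-n$ to avoid the negative integers, and $m-n=-1/2-n$ is never an integer, so the substitution $m=-1/2$ is legitimate for every non-negative integer $n$.

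Next I would rewrite the two $m$-dependent factors on the left of~\eqref{evuuti7}. The binomial factor is handled by~\eqref{duwnvoj}, which inverts to $\binom{-1/2}{k}^{-1}=(-1)^k\binom{2k}{k}^{-1}2^{2k}$. The harmonic-number factor $H_{m-k}-H_m$ becomes $H_{-1/2-k}-H_{-1/2}$, and the one genuinely non-routine step is to evaluate this in closed form. I would do so by iterating the recurrence $H_x-H_{x-1}=1/x$ for the analytically continued harmonic numbers $H_x=\psi(x+1)+\gamma$: summing this over the arguments $-1/2,-3/2,\ldots,-1/2-k$ telescopes to $H_{-1/2}-H_{-1/2-k}=\sum_{j=1}^{k}\frac{1}{-(2j-1)/2}=-2O_k$, whence $H_{-1/2-k}-H_{-1/2}=2O_k$. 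This value is consistent with~\eqref{kblsbvx}, since $H_{-1/2-k}=H_{k-1/2}=2O_k-2\ln 2$ and $H_{-1/2}=-2\ln 2$.

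With these two substitutions the left-hand side of~\eqref{evuuti7} becomes $2\sum_{k=0}^n(-1)^k\binom{n}{k}\binom{2k}{k}^{-1}2^{2k}O_k$, while the right-hand side is $\frac{-n}{(m-n+1)^2}=\frac{-n}{(1/2-n)^2}$. Writing $(1/2-n)^2=(2n-1)^2/4$ turns the right-hand side into $\frac{-4n}{(2n-1)^2}$, and dividing through by $2$ yields the claimed identity. The only part requiring care is the harmonic-number evaluation $H_{-1/2-k}-H_{-1/2}=2O_k$; after that, the argument is pure substitution and elementary simplification.
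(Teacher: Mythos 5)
Your proposal is correct and follows exactly the paper's route: the paper's proof is precisely ``set $m=-1/2$ in~\eqref{evuuti7} and use~\eqref{kblsbvx} and~\eqref{duwnvoj},'' and you have simply supplied the details, including the correct evaluation $H_{-1/2-k}-H_{-1/2}=2O_k$ and the simplification $(1/2-n)^2=(2n-1)^2/4$.
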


\begin{proof}
Set $m=-1/2$ in~\eqref{evuuti7} and use~\eqref{kblsbvx} and~\eqref{duwnvoj}.
\end{proof}

\begin{example}
Differentiating~\eqref{foz9j01} with respect to $p$ using
\begin{equation*}
\frac{d}{{dp}}\binom{{m}}{{p + k}}^{ - 1}  = \binom{{m}}{{p + k}}^{ - 1} \left( {H_{p + k}  - H_{m - p - k} } \right)
\end{equation*}
yields
\begin{equation}\label{mvuo316}
\sum_{k = 0}^n {\binom{{n}}{k}\binom{{m}}{{p + k}}^{ - 1} \left( {H_{m - p - k}  - H_{p + k} } \right)}  = \frac{{m + 1}}{{m - n + 1}}\binom{{m - n}}{p}^{ - 1} \left( {H_{m - p - n}  - H_p } \right),
\end{equation}
which can also be written as
\begin{align}
&\sum_{k = 0}^n {\binom{{n}}{k}\binom{{m}}{{p + n - k}}^{ - 1} \left( {H_{m - p - n + k}  - H_{p + n - k} } \right)}\nonumber\\
&\qquad  = \frac{{m + 1}}{{m - n + 1}}\binom{{m - n}}{p}^{ - 1} \left( {H_{m - p - n}  - H_p } \right).
\end{align}
Using
\begin{equation*}
\bar s_k  = \binom{{m}}{{p + n - k}}^{ - 1} \left( {H_{m - p - n + k}  - H_{p + n - k} } \right),\quad\bar \sigma _k  = \frac{{m + 1}}{{m - k + 1}}\binom{{m - k}}{p}\left( {H_{m - p - k}  - H_p } \right),
\end{equation*}
in~\eqref{o5suprg} leads to the following identity:
\begin{align}\label{v4unj46}
&\sum_{k = 0}^n {\binom{{n}}{k}\binom{{m}}{{p + n - k}}^{ - 1} \left( {H_{m - p - n + k}  - H_{p + n - k} } \right)\bar \tau _k }\nonumber\\
&\qquad  = \sum_{k = 0}^n {\binom{{n}}{k}\binom{{m - n + k}}{p}^{ - 1} \frac{{m + 1}}{{m - n + k + 1}}\left( {H_{m - p - n + k}  - H_p } \right)\bar t_k } ,
\end{align}
which holds for an arbitrary binomial-transform pair of the second kind and complex numbers $m$ and $p$ for which $m-n-p$ is not a negative integer.
\end{example}

\begin{proposition}
If $n$ is a non-negative integer and $\{(\bar t_k),(\bar\tau_k)\}$, $k=0,1,2,\ldots$, is a binomial-transform pair of the second kind, then
\begin{equation}
\sum_{k = 0}^n {\binom{{n + 1}}{{k + 1}}H_k \bar t_k }  = \sum_{k = 0}^n {\left( {H_k  - H_{n - k} } \right)\bar\tau_k} .
\end{equation}
In particular,
\begin{equation}
\sum_{k = 0}^n {\binom{{n + 1}}{{k + 1}}H_k B_k }  = \sum_{k = 0}^n {( - 1)^k \left( {H_k  - H_{n - k} } \right)B_k } 
\end{equation}
and
\begin{equation}
\sum_{k = 0}^n {\binom{{n + 1}}{{k + 1}}H_k x^k }  = \sum_{k = 0}^n {\left( {H_k  - H_{n - k} } \right)\left( {1 + x} \right)^k } ,
\end{equation}
for every complex number $x$.
\end{proposition}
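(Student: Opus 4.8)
The plan is to reduce the general identity to a single binomial--harmonic identity and then read off the two particular cases by substituting explicit pairs. First I would use the defining relation of a second-kind transform pair, $\bar\tau_k=\sum_{j=0}^k\binom kj\bar t_j$, to rewrite the right-hand side of the proposition. Substituting this and interchanging the order of the two finite sums, the right-hand side becomes $\sum_{j=0}^n\bar t_j\,C_{n,j}$ with $C_{n,j}=\sum_{k=j}^n\binom kj(H_k-H_{n-k})$, whereas the left-hand side is already $\sum_{j=0}^n\binom{n+1}{j+1}H_j\,\bar t_j$. Since the pair $\{(\bar t_k),(\bar\tau_k)\}$ is arbitrary (one may realize the $\bar t_j$ as the independent monomials $x^j$, with $\bar\tau_k=(1+x)^k$, via the operator $\mathcal M_x$ of the lemma preceding Theorem~\ref{thm.slg7tm1}), it suffices to prove the coefficientwise identity
\[
C_{n,j}=\sum_{k=j}^n\binom kj\bigl(H_k-H_{n-k}\bigr)=\binom{n+1}{j+1}H_j,\qquad 0\le j\le n.
\]

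To establish this I would write $H_m=\sum_{l=1}^m 1/l$ in both harmonic factors and interchange the order of summation, using the hockey-stick identity $\sum_{k=0}^m\binom kj=\binom{m+1}{j+1}$ to collapse the inner sums. After reindexing the reflected piece by $l\mapsto n+1-l$ and combining the two resulting terms through $\tfrac1l+\tfrac1{n+1-l}=\tfrac{n+1}{l(n+1-l)}$ together with $\binom l{j+1}/l=\tfrac1{j+1}\binom{l-1}{j}$, the expression collapses to
\[
C_{n,j}=\binom{n+1}{j+1}H_n-\frac{n+1}{j+1}\sum_{p=0}^{n-1}\frac{\binom pj}{n-p}.
\]
Everything then hinges on the auxiliary identity $\sum_{p=0}^{N-1}\binom pj/(N-p)=\binom Nj(H_N-H_j)$, which I regard as the main obstacle and would prove by induction on $j$. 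The base case $j=0$ is just $\sum_{p=0}^{N-1}1/(N-p)=H_N$, and the inductive step follows from $\binom p{j+1}=\tfrac{p-j}{j+1}\binom pj$, the splitting $\tfrac{p-j}{N-p}=-1+\tfrac{N-j}{N-p}$, and the evaluations $\sum_{p=0}^{N-1}\binom pj=\binom N{j+1}$ and $(N-j)\binom Nj=(j+1)\binom N{j+1}$. Feeding this back with $N=n$ and using $\tfrac{n+1}{j+1}\binom nj=\binom{n+1}{j+1}$ gives $C_{n,j}=\binom{n+1}{j+1}H_n-\binom{n+1}{j+1}(H_n-H_j)=\binom{n+1}{j+1}H_j$, as required.

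Finally I would deduce the two displayed particular cases by specialization. Choosing the second-kind pair $\bar t_k=x^k$, $\bar\tau_k=(1+x)^k$ (admissible since $\sum_{k=0}^n\binom nk x^k=(1+x)^n$) turns the general identity into the polynomial identity $\sum_{k=0}^n\binom{n+1}{k+1}H_k x^k=\sum_{k=0}^n(H_k-H_{n-k})(1+x)^k$. Choosing instead the pair $\bar t_k=B_k$, $\bar\tau_k=(-1)^kB_k$, which is precisely the second-kind pair arising from the recurrence $\sum_{k=0}^n\binom nk B_k=(-1)^nB_n$ used earlier, yields $\sum_{k=0}^n\binom{n+1}{k+1}H_kB_k=\sum_{k=0}^n(-1)^k(H_k-H_{n-k})B_k$.
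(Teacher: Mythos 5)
Your proof is correct, but it takes a genuinely different route from the paper's. The paper obtains the proposition as an immediate specialization: it sets $m=n$ and $p=0$ in identity~\eqref{v4unj46}, which was itself produced by differentiating Trif's identity~\eqref{foz9j01} with respect to $p$ and feeding the resulting binomial-transform pair into Theorem~\ref{thm.njtayqa} (identity~\eqref{o5suprg}); the two particular cases then follow exactly as in your last paragraph. You instead give a self-contained elementary proof: expanding $\bar\tau_k=\sum_{j\le k}\binom kj\bar t_j$, swapping sums, and reducing everything to the coefficient identity $\sum_{k=j}^n\binom kj(H_k-H_{n-k})=\binom{n+1}{j+1}H_j$, which you settle via the hockey-stick identity and the auxiliary evaluation $\sum_{p=0}^{N-1}\binom pj/(N-p)=\binom Nj(H_N-H_j)$ proved by induction on $j$. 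I checked the reduction, the combination step using $\tfrac1l+\tfrac1{n+1-l}=\tfrac{n+1}{l(n+1-l)}$ and $\binom l{j+1}/l=\tfrac1{j+1}\binom{l-1}j$, and both the base case and inductive step of the auxiliary identity; all are sound, and the final assembly $\binom{n+1}{j+1}H_n-\binom{n+1}{j+1}(H_n-H_j)=\binom{n+1}{j+1}H_j$ is right. What your approach buys is independence from the differentiation-of-Gamma-function machinery and from Trif's identity, plus an explicit closed form for the inner sum $C_{n,j}$ that has independent interest; what it costs is length, whereas the paper's argument is a one-line corollary of results it has already built up.
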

\begin{proof}
Set $m=n$ and $p=0$ in~\eqref{v4unj46}.
\end{proof}

By considering the binomial-transform pair of the second kind $\{(1),(2^k)\}$, $k=0,1,2,\ldots$ and making use of Theorem~\ref{thm.njtayqa}, we obtain the next result expressing the binomial transform of a binomial transform.
\begin{theorem}
Let $n$ be a non-negative integer. If $\{(\bar s_k), (\bar\sigma_k)\}$ is a binomial-transform pair of the second kind, then
\begin{equation}\label{pop9ybt}
\sum_{k = 0}^n {\binom{{n}}{k}\bar \sigma _k }  = \sum_{k = 0}^n {\binom{{n}}{k}2^{n - k} \bar s_k } .
\end{equation}
\end{theorem}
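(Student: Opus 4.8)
The plan is to apply Theorem~\ref{thm.njtayqa} with a single, cleverly chosen auxiliary pair of the second kind. First I would verify that the constant sequence together with the powers of two, namely $\{(\bar t_k),(\bar\tau_k)\}=\{(1),(2^k)\}$, really is a binomial-transform pair of the second kind. By the defining relation~\eqref{second}, the transform of the constant sequence $\bar t_k\equiv 1$ is $\bar\tau_n=\sum_{k=0}^n\binom nk\cdot 1=2^n$, so this pair is legitimate and can be fed into any identity that holds for arbitrary second-kind pairs.

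Next I would substitute this pair into the $\{(\bar t_k),(\bar\tau_k)\}$ slot of identity~\eqref{o5suprg}, leaving $\{(\bar s_k),(\bar\sigma_k)\}$ completely arbitrary. On the left-hand side of~\eqref{o5suprg} the factor $\bar\tau_{n-k}$ specializes to $2^{n-k}$, producing $\sum_{k=0}^n\binom nk 2^{n-k}\bar s_k$; on the right-hand side the factor $\bar t_{n-k}$ specializes to $1$, producing $\sum_{k=0}^n\binom nk\bar\sigma_k$. Equating the two sides gives precisely the claimed identity~\eqref{pop9ybt}, valid for every second-kind pair $\{(\bar s_k),(\bar\sigma_k)\}$ and every non-negative integer $n$.

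I expect essentially no obstacle: the whole argument reduces to recognizing that $\{(1),(2^k)\}$ is a second-kind pair and then reading off the two convolutions after the substitution. The only point that requires the slightest care is the \emph{direction} of the substitution. One must insert the known pair in the $(\bar t_k,\bar\tau_k)$ position, so that it is the transform $\bar\tau$ (the powers of two) that multiplies the original sequence $\bar s$, while the original sequence $\bar t$ (the constant $1$) multiplies the transform $\bar\sigma$. This is exactly what generates the asymmetric-looking weights $2^{n-k}$ on one side and no weight on the other; swapping the roles would merely reproduce the same identity with $\bar s$ and $\bar\sigma$ interchanged, which is equally valid but is the statement one obtains by applying the result to the pair read in the opposite order.
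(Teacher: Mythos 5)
Your proof is correct and follows exactly the paper's route: the paper likewise obtains this theorem by feeding the second-kind pair $\{(1),(2^k)\}$ into the $(\bar t_k,\bar\tau_k)$ slot of Theorem~\ref{thm.njtayqa}, i.e.\ identity~\eqref{o5suprg}. Your verification that $\{(1),(2^k)\}$ is a legitimate second-kind pair and your care about the direction of the substitution are both sound.
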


\begin{remark}
Identity~\eqref{pop9ybt} corresponds to Example 4 of Boyadzhiev~\cite{boyadzhiev16}.
\end{remark}

\begin{example}
With~\cite[Equation (10.10a)]{boyadzhiev18} 
\begin{equation}\label{b0s5n16}
\sum_{k = 0}^n {\binom{{n}}{k}\binom{{x}}{{k + z}}}  = \binom{{n + x}}{{n + z}},
\end{equation}
in mind, choosing
\begin{equation}
\bar s_k  = \binom{{x}}{{k + z}},\quad\bar \sigma _k  = \binom{{k + x}}{{k + z}},
\end{equation}
in~\eqref{pop9ybt} yields
\begin{equation}
\sum_{k = 0}^n {\binom{{n}}{k}\binom{{k + x}}{{k + z}}}  = \sum_{k = 0}^n {\binom{{n}}{k}\binom{{x}}{{k + z}}2^{n - k} } .
\end{equation}
for $n$ a non-negative integer and $x$ and $z$ complex numbers.
\end{example}

\begin{proposition}
If $j$ and $n$ are non-negative integers, then
\begin{equation}\label{s4jiizc}
\sum_{k = 0}^n {( - 1)^{n-k} \binom{n-j}{k-j}t_{n-k} }  = \tau _{n - j} .
\end{equation}

\end{proposition}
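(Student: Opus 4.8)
The plan is to prove \eqref{s4jiizc} by a single reindexing of the sum, reducing it directly to the defining relation \eqref{first} of a binomial-transform pair of the first kind. I take the intended (unstated) hypothesis to be that $\{(t_k),(\tau_k)\}$, $k=0,1,2,\ldots$, is such a pair, and I assume $n\ge j$ so that both the index $n-j$ and the binomial coefficients involved are meaningful.

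First I would note that when $k<j$ the lower index $k-j$ is negative, so $\binom{n-j}{k-j}=0$; hence only the terms with $j\le k\le n$ actually contribute to the left-hand side. Next I would introduce the new summation variable $i=n-k$, which ranges over $0\le i\le n-j$ on the surviving terms. Under this substitution $(-1)^{n-k}$ becomes $(-1)^i$, the factor $t_{n-k}$ becomes $t_i$, and the lower index is rewritten as $k-j=(n-j)-i$.

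The key step is the symmetry of the binomial coefficient. Since the two lower indices satisfy $(k-j)+(n-k)=n-j$, we have $\binom{n-j}{k-j}=\binom{n-j}{(n-j)-i}=\binom{n-j}{i}$. After the substitution the left-hand side therefore reads $\sum_{i=0}^{n-j}(-1)^i\binom{n-j}{i}t_i$, which is exactly the expression appearing on the right of \eqref{first} with index $n-j$, and so equals $\tau_{n-j}$. This completes the identification.

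I expect no genuine obstacle: the argument is pure bookkeeping rather than substantive analysis. The only points that require care are verifying the vanishing of $\binom{n-j}{k-j}$ outside the range $j\le k\le n$ (so that the sum truncates correctly at $i=n-j$ and no spurious terms with $i>n-j$ survive), and keeping the standing assumption $n\ge j$ that makes both sides well-defined.
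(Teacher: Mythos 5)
Your proof is correct, but it takes a genuinely different route from the paper. You reindex the sum directly: after discarding the terms with $k<j$ (where $\binom{n-j}{k-j}=0$), the substitution $i=n-k$ together with the symmetry $\binom{n-j}{k-j}=\binom{n-j}{i}$ turns the left-hand side into $\sum_{i=0}^{n-j}(-1)^i\binom{n-j}{i}t_i$, which is $\tau_{n-j}$ by the defining relation~\eqref{first}. The paper instead derives the identity as an instance of its convolution theorem~\eqref{main1}: it takes the pair $s_k=\binom{k}{j}$, $\sigma_k=(-1)^j\delta_{kj}$ (justified by $\sum_{k=0}^n(-1)^k\binom{n}{k}\binom{k}{j}=(-1)^j\delta_{nj}$), substitutes into~\eqref{main1}, and then implicitly uses the trinomial revision $\binom{n}{k}\binom{k}{j}=\binom{n}{j}\binom{n-j}{k-j}$ and cancels the common factor $\binom{n}{j}$. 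Your argument is more elementary and self-contained, needing nothing beyond the definition of a binomial-transform pair of the first kind; the paper's buys a demonstration that the proposition falls out of the general convolution machinery, consistent with how the rest of the paper is organized. You were also right to flag the implicit hypotheses the statement omits: that $\{(t_k),(\tau_k)\}$ is a pair of the first kind and that $n\ge j$ (otherwise $\tau_{n-j}$ is undefined and, in the paper's route, $\binom{n}{j}=0$ cannot be cancelled).
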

\begin{proof}
Consider the identity
\begin{equation}\label{siusv41}
\sum_{k = 0}^n {( - 1)^k \binom{{n}}{k}\binom{{k}}{j}}  = ( - 1)^j \delta _{nj} ,
\end{equation}
and use $s_k=\binom kj$ and $\sigma_k=(-1)^j\delta_{kj}$ in~\eqref{main1}.
\end{proof}

\section{Identities involving mixed binomial transform pairs}\label{sec.mixed}

\begin{theorem}
Let $n$ be a non-negative integer. If $\{(s_k), (\sigma_k)\}$ is a binomial transform pair of the first kind and $\{(\bar t_k), (\bar\tau_k)\}$, $k=0,1,2,\ldots$, is a binomial-transform pair of the second kind, then
\begin{equation}\label{x1h685v}
\sum_{k = 0}^n {\binom{{n}}{k}s_k \bar t_{n-k} }  = \sum_{k = 0}^n {(-1)^k\binom{{n}}{k} \sigma_k \bar \tau _{n-k} } .
\end{equation}
\end{theorem}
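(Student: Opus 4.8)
The plan is to mimic the operator-based arguments used for Theorems~\ref{thm.main1} and~\ref{thm.slg7tm1}, except that now I apply one operator of each kind, since the identity~\eqref{x1h685v} mixes a first-kind pair $\{(s_k),(\sigma_k)\}$ with a second-kind pair $\{(\bar t_k),(\bar\tau_k)\}$. The whole difficulty is concentrated in choosing the right two-variable algebraic seed identity: it must carry $(1-y)^k$ on the $y$-side (so that the first-kind operator produces $\sigma_k$) and $(1+x)^{n-k}$ on the $x$-side (so that the second-kind operator produces $\bar\tau_{n-k}$), with signs arranged to match the factor $(-1)^k$ on the right of~\eqref{x1h685v}.

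First I would record the elementary identity
\begin{equation*}
\sum_{k = 0}^n {\binom{{n}}{k}y^k x^{n - k} }  = \sum_{k = 0}^n {( - 1)^k \binom{{n}}{k}(1 - y)^k (1 + x)^{n - k} } ,
\end{equation*}
which holds by the binomial theorem because both sides equal $(x+y)^n$; indeed the right-hand side is $\bigl(-(1-y)+(1+x)\bigr)^n = (x+y)^n$. This is the mixed-sign analogue of~\eqref{eq.fltsd62} and~\eqref{eq.ch50dhg}, and confirming the sign bookkeeping here is the only point that requires care.

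Next I would introduce the second-kind operator $\mathcal M_x$ with $\mathcal M_x(x^j)=\bar t_j$, so that $\mathcal M_x((1+x)^j)=\bar\tau_j$ by the Lemma opening this section. Applying $\mathcal M_x$ to both sides (holding $y$ fixed) turns $x^{n-k}$ into $\bar t_{n-k}$ and $(1+x)^{n-k}$ into $\bar\tau_{n-k}$, yielding
\begin{equation*}
\sum_{k = 0}^n {\binom{{n}}{k}y^k \bar t_{n - k} }  = \sum_{k = 0}^n {( - 1)^k \binom{{n}}{k}(1 - y)^k \bar\tau_{n - k} } .
\end{equation*}

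Finally I would introduce the first-kind operator $\mathcal L_y$ with $\mathcal L_y(y^j)=s_j$, so that $\mathcal L_y((1-y)^j)=\sigma_j$ by the Lemma preceding Theorem~\ref{thm.main1}. Because $\bar t_{n-k}$ and $\bar\tau_{n-k}$ are constants with respect to $y$, linearity of $\mathcal L_y$ sends $y^k\bar t_{n-k}$ to $s_k\bar t_{n-k}$ and $(1-y)^k\bar\tau_{n-k}$ to $\sigma_k\bar\tau_{n-k}$, which produces exactly~\eqref{x1h685v} and completes the proof. The two operators act on independent variables, so their order is immaterial; all that matters is that the seed identity is correct.
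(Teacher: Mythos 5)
Your proof is correct and follows essentially the same route as the paper: the author also applies $\mathcal M_x$ and $\mathcal L_y$ in succession to the seed identity $\sum_{k=0}^n\binom nk y^{n-k}x^k=\sum_{k=0}^n(-1)^{n-k}\binom nk(1-y)^{n-k}(1+x)^k$, which is your identity after the reindexing $k\mapsto n-k$. The sign bookkeeping you verify is exactly the content of the paper's one-line proof.
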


\begin{proof}
Let $\mathcal M_x(x^k)=\bar t_k$ and $\mathcal L_y(y^k)=s_k$.

Act on the identity
\begin{equation*}
\sum_{k = 0}^n {\binom{{n}}{k}y^{n - k} x^k }  = \sum_{k = 0}^n {( - 1)^{n - k} \binom{{n}}{k}\left( {1 - y} \right)^{n - k} \left( {1 + x} \right)^k } ,
\end{equation*}
with $\mathcal M_x$ and $\mathcal L_y$, in succession.
\end{proof}

\begin{example}
Use of the binomial-transform pair of the first kind $\{(F_k),(-F_k)\}$, and the binomial-transform pair of the second kind $\{(B_k),((-1)^kB_k)\}$, $k=0,1,2,\ldots$, in~\eqref{x1h685v} gives 
\begin{equation}
\sum_{k = 0}^n {\binom{{n}}{k}F_k B_{n - k} }  = 0,
\end{equation}
for $n$ an even integer.

Similarly $\{(L_k),(-L_k)\}$ and $\{(B_k),((-1)^kB_k)\}$, $k=0,1,2,\ldots$, in~\eqref{x1h685v} yields
\begin{equation}
\sum_{k = 0}^n {\binom{{n}}{k}L_k B_{n - k} }  = 0,
\end{equation}
for $n$ an odd integer.

\end{example}

\begin{example}
Use of the following binomial-transform pair (see Example~\ref{ex.omtqrl4}):
\begin{equation*}
s_k  = H_{k + m} ,\quad\sigma _k  =  \frac{{\delta _{k0}(1+H_m)-1 }}{{k + \delta _{k0} }}\binom{{k + m}}{m}^{ - 1},  
\end{equation*}
in~\eqref{x1h685v} leads to the following identity
\begin{equation}\label{jy2d3um}
\sum_{k = 0}^n {\binom{{n}}{k}H_{k + m} \bar t_{n - k} }  = H_m \bar \tau _n  - \sum_{k = 1}^n {( - 1)^k \binom{{n}}{k}\binom{{k + m}}{m}^{ - 1}\frac1k\, \bar \tau _{n - k} } ,
\end{equation}
which is valid for every binomial-transform pair of the second kind $\{(\bar t_k),(\bar\tau_k)\}$, $k=0,1,2,\ldots$ and every complex number $m$ that is not a negative integer.

In particular, the following polynomial identity holds:
\begin{equation*}
\sum_{k = 0}^n {\binom{{n}}{k}H_{k + m} t^{n - k} }  = (1+t)^n H_m  - \sum_{k = 1}^n {( - 1)^k \binom{{n}}{k}\binom{{k + m}}{m}^{ - 1}\frac{(1+t)^{n - k}}k },
\end{equation*}
and can be recast as stated in proposition~\ref{prop.ai6n4y9} by writing $1/t$ for $t$.
\end{example}

\begin{proposition}\label{prop.ai6n4y9}
If $n$ is a non-negative integer, $m$ is a complex number that is not a negative integer and $t$ is a complex variable, then
\begin{equation}\label{t7tu7xu}
\sum_{k = 0}^n {\binom{{n}}{k}H_{k + m} t^k }  = \left( {1 + t} \right)^n H_m  + \sum_{k = 1}^n {( - 1)^{k - 1} \binom{{n}}{k}\binom{{k + m}}{m}^{ - 1} \frac{1}{k}\,t^k \left( {1 + t} \right)^{n - k} } .
\end{equation}
\end{proposition}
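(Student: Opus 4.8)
The plan is to obtain \eqref{t7tu7xu} from the polynomial identity recorded in the preceding example by a single reciprocal substitution. That identity reads
\begin{equation*}
\sum_{k = 0}^n \binom{n}{k}H_{k + m} t^{n - k} = (1+t)^n H_m - \sum_{k = 1}^n (-1)^k \binom{n}{k}\binom{k + m}{m}^{-1}\frac{(1+t)^{n - k}}{k},
\end{equation*}
and itself arises from \eqref{jy2d3um} upon choosing the binomial-transform pair of the second kind $\bar t_k = t^k$, $\bar\tau_k = (1+t)^k$ (a valid pair by the lemma preceding Theorem~\ref{thm.slg7tm1}, since $\mathcal M_x(x^j)=t^j$ forces $\mathcal M_x((1+x)^j)=(1+t)^j$). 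First I would establish or cite this identity, and then recast it via $t\mapsto 1/t$.

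The main step is to replace $t$ by $1/t$ throughout and then multiply both sides by $t^n$ to clear the negative powers. On the left, the factor $t^{-(n-k)}t^n = t^k$ turns $\sum_{k}\binom nk H_{k+m}t^{n-k}$ into $\sum_{k}\binom nk H_{k+m}t^k$. On the right, $(1+1/t)^n t^n = (1+t)^n$ handles the leading term, while each summand acquires $(1+1/t)^{n-k}t^n = t^k(1+t)^{n-k}$; absorbing the global minus sign into the factor $(-1)^k$ converts it to $(-1)^{k-1}$. Matching terms then reproduces the right-hand side of \eqref{t7tu7xu} term by term.

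I expect no serious obstacle; the one point worth a sentence of care is the validity of the substitution $t\mapsto 1/t$. After multiplying through by $t^n$, both sides are genuine polynomials in $t$, and the manipulated equality holds for every nonzero complex $t$. Since two polynomials that agree at infinitely many points coincide, the identity extends to all complex $t$, including $t=0$, which establishes the proposition as stated. The only real work is the bookkeeping of exponents and the single sign flip, both of which are routine.
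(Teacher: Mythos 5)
Your proposal is correct and is exactly the paper's argument: the paper derives the polynomial identity from \eqref{jy2d3um} with $\bar t_k=t^k$, $\bar\tau_k=(1+t)^k$ and then states that it ``can be recast as stated in Proposition~\ref{prop.ai6n4y9} by writing $1/t$ for $t$.'' Your added remark about clearing denominators with $t^n$ and extending to $t=0$ by polynomial continuation is a small but welcome piece of rigor the paper leaves implicit.
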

In particular, evaluation at $m=0$ and $m=-1/2$, respectively, gives
\begin{equation}
\sum_{k = 0}^n {\binom{{n}}{k}H_k t^k }  =\sum_{k = 1}^n {( - 1)^{k - 1} \binom{{n}}{k}\frac{1}{k}\,t^k \left( {1 + t} \right)^{n - k} } 
\end{equation}
and
\begin{equation}\label{mdlphw5}
\sum_{k = 0}^n {\binom{{n}}{k}O_k t^k }  =\sum_{k = 1}^n {( - 1)^{k - 1} \binom{{n}}{k}2^{2k-1}\binom{2k}k^{-1}\frac{1}{k}\,t^k \left( {1 + t} \right)^{n - k} } ,
\end{equation}
with the special values
\begin{equation}
\sum_{k = 0}^n {\binom{{n}}{k}H_k }  = \sum_{k = 1}^n {( - 1)^{k - 1} 2^{n - k} \binom{{n}}{k}\frac{1}{k}} 
\end{equation}
and
\begin{equation}
\sum_{k = 0}^n {\binom{{n}}{k}O_k }  = \sum_{k = 1}^n {( - 1)^{k - 1} 2^{n + k - 1} \binom{{n}}{k}\binom{2k}k^{-1}\frac{1}{k}}. 
\end{equation}
In deriving~\eqref{mdlphw5}, we used~\eqref{kblsbvx} and~\eqref{zx8842q}.

\begin{corollary}
Let $n$ be a non-negative integer. If $\{(s_k), (\sigma_k)\}$ is a binomial transform pair of the first kind and $\{(\bar t_k), (\bar\tau_k)\}$, $k=0,1,2,\ldots$, is a binomial-transform pair of the second kind, then
\begin{equation}
\sum_{k = 0}^n {\binom{{n}}{k}\sigma_k \bar t_{n-k} }  = \sum_{k = 0}^n {(-1)^k\binom{{n}}{k} s_k \bar \tau _{n-k} } .
\end{equation}
\end{corollary}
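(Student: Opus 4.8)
The plan is to exploit the symmetry of the defining relation~\eqref{first}. Observe that the equivalence
\begin{equation*}
s_n = \sum_{k=0}^n (-1)^k \binom nk \sigma_k \iff \sigma_n = \sum_{k=0}^n (-1)^k \binom nk s_k
\end{equation*}
is perfectly symmetric under interchange of the two sequences: the first-kind binomial transform is an involution. Consequently, whenever $\{(s_k),(\sigma_k)\}$ is a binomial-transform pair of the first kind, so is the swapped pair $\{(\sigma_k),(s_k)\}$. This is the only structural fact I need, and it is immediate from~\eqref{first}; there is no genuine obstacle to overcome here.

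With this in hand, the argument is a one-line application of the theorem that established~\eqref{x1h685v}. First I would invoke that theorem with its roles filled as follows: take the first-kind pair to be $\{(\sigma_k),(s_k)\}$ (legitimate by the involution just noted), and keep the second-kind pair $\{(\bar t_k),(\bar\tau_k)\}$ unchanged. The theorem then reads
\begin{equation*}
\sum_{k=0}^n \binom nk \sigma_k \bar t_{n-k} = \sum_{k=0}^n (-1)^k \binom nk s_k \bar\tau_{n-k},
\end{equation*}
where on the right the transform of the first-kind sequence $(\sigma_k)$ is precisely $(s_k)$, again by the involution. This is exactly the asserted identity, so the proof is complete.

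I expect no computational difficulties; the entire content is the recognition that~\eqref{x1h685v} is being applied to the \emph{relabelled} first-kind pair rather than re-deriving anything from the linear-operator machinery. If one preferred a self-contained derivation, an alternative would be to act with the operators $\mathcal L_y$ and $\mathcal M_x$ on a suitable binomial-theorem identity (in the spirit of the proof of~\eqref{x1h685v}), choosing the identity
\begin{equation*}
\sum_{k=0}^n \binom nk (1-y)^{n-k} x^k = \sum_{k=0}^n (-1)^k \binom nk y^{n-k} (1+x)^k,
\end{equation*}
so that $\mathcal L_y$ produces $\sigma$ on the left and $s$ on the right while $\mathcal M_x$ produces $\bar t$ and $\bar\tau$; but the relabelling argument above is shorter and I would present it as the proof.
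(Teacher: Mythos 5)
Your proof is correct and matches the paper's (implicit) intent: the paper states this corollary without proof, evidently relying on exactly the observation you make, that the first-kind transform is an involution so $\{(\sigma_k),(s_k)\}$ is again a first-kind pair and the theorem giving~\eqref{x1h685v} applies to the swapped pair. Nothing further is needed.
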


\section{Binomial transform of products}\label{sec.products}
Consider a binomial-transform pair of the second kind, say $\{(\bar t_k), (\bar\tau_k)\}$ and an arbitrary sequence of complex numbers $(s_k)$, $k=0,1,2,\ldots$. In this section we evaluate the binomial transform of the sequence $(s_k\bar t_k)$ in terms of $(s_k)$ and $(\bar\tau_k)$, by establishing the result stated in the next theorem.
\begin{theorem}
Let $n$ be a non-negative integer. If $\{(\bar t_k), (\bar\tau_k)\}$, $k=0,1,2,\ldots$, is a binomial-transform pair of the second kind and $(s_k)$, $k=0,1,2,\ldots$, is a sequence of complex numbers, then
\begin{equation}\label{wjz3q4q}
\sum_{k = 0}^n {\binom{{n}}{k}s_k \bar t_k }  = \sum_{k = 0}^n {\binom{{n}}{k}\bar \tau _k \sum_{j = 0}^{n - k} {(-1)^j\binom{{n - k}}{j}s_{k+j} } } .
\end{equation}
\end{theorem}

\begin{proof}
The elementary identity
\begin{equation}\label{jsam0sy}
\sum_{k = 0}^n {\binom{{n}}{k}x^k y^k }  = \sum_{k = 0}^n {\binom{{n}}{k}\left( {1 + y} \right)^k \sum_{j = 0}^{n - k} {( - 1)^j \binom{{n - k}}{j}x^{k + j} } }
\end{equation}
can be checked using the binomial theorem. Operating on both sides of~\eqref{jsam0sy} with $\mathcal M_x\mathcal M_y$ or $\mathcal L_x\mathcal M_y$ yields~\eqref{wjz3q4q}.
\end{proof}

\begin{remark}
In terms of a binomial-transform pair of the first kind, $\{( t_k), (\tau_k)\}$, $k=0,1,2,\ldots$, identity~\eqref{wjz3q4q} reads
\begin{equation}\label{rb8niyz}
\sum_{k = 0}^n {\binom{{n}}{k} s_k  t_k }  = \sum_{k = 0}^n {(-1)^k\binom{{n}}{k} \tau _k \sum_{j = 0}^{n - k} {\binom{{n - k}}{j} s_{k+j} } } .
\end{equation}

\end{remark}

\begin{theorem}
Let $n$ be a non-negative integer. If $\{(\bar t_k), (\bar\tau_k)\}$, $k=0,1,2,\ldots$, is a binomial-transform pair of the second kind and $(s_k)$, $k=0,1,2,\ldots$, is a sequence of complex numbers, then
\begin{equation}\label{rrn382p}
\sum_{k = 0}^n {(-1)^k\binom nks_k \bar t_k }  = \sum_{k = 0}^n {(-1)^k\binom nk\bar \tau _k \sum_{j = 0}^{n - k} {\binom{{n - k}}{j}s_{k+j} } } .
\end{equation}
\end{theorem}

\begin{proof}
Replace $s_k$ with $(-1)^ks_k$ in~\eqref{wjz3q4q} since $(s_k)$ is an arbitrary sequence.
\end{proof}

\begin{remark}
Written for a binomial-transform pair of the first kind, identity~\eqref{rrn382p} becomes
\begin{equation}\label{i752h6k}
\sum_{k = 0}^n {(-1)^k\binom nks_kt_k }  = \sum_{k = 0}^n {\binom nk\tau _k \sum_{j = 0}^{n - k} {(-1)^j\binom{{n - k}}{j}s_{k+j} } } .
\end{equation}

\end{remark}

\begin{remark}
Boyadzhiev~\cite{boyadzhiev16} studied binomial transform of products. His results connect binomial-transform pairs and are expressed in terms of the forward difference operator.
\end{remark}

\begin{example}
Setting $s_k=G_k$ in~\eqref{wjz3q4q} and making use of~\eqref{q96hz7w} gives
\begin{equation}
\sum_{k = 0}^n {\binom{{n}}{k}G_k \bar t_k }  = \sum_{k = 0}^n {( - 1)^k \binom{{n}}{k}\bar \tau _k \left( {G_0 L_{n - 2k}  - G_{n - 2k} } \right)} .
\end{equation}
The choice $s_k=G_k$ in~\eqref{rrn382p} gives
\begin{equation}\label{r638l94}
\sum_{k = 0}^n {( - 1)^k \binom{{n}}{k}G_k \bar t_k }  = \sum_{k = 0}^n {( - 1)^k \binom{{n}}{k}\bar \tau _kG_{2n - k}  } ;
\end{equation}
since~\cite[Equation (49)]{vajda}:
\begin{equation*}
\sum_{j = 0}^{n - k} {\binom{{n - k}}{j}G_{j + k} }  = G_{2n - k} .
\end{equation*}
Identity~\eqref{r638l94} corresponds to, but is slightly more general than, Boyadzhiev's identity~\cite[Equation (1.11)]{boyadzhiev16}.
\end{example}

\section{Symmetry properties and generalizations}\label{symmetry}
In this section we prove two symmetry properties and, based upon them, derive two generalizations of~\eqref{main1}.
\begin{theorem}
Let $\{(s_k),(\sigma_k)\}$, $k=0,1,2,\ldots$, be a binomial-transform pair of the first kind. If $m$ and $n$ are non-negative integers, then
\begin{equation}\label{hl4i644}
\sum_{k = 0}^n {( - 1)^k \binom{{n}}{k}s_{k + m} }  = \sum_{k = 0}^m {( - 1)^k \binom{{m}}{k}\sigma _{k + n} } .
\end{equation}

\end{theorem}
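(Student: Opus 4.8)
The plan is to reuse the linear-operator technique from the proof of Theorem~\ref{thm.main1}. Introduce the operator $\mathcal{L}_x$ defined by $\mathcal{L}_x(x^j)=s_j$, so that, by the Lemma preceding Theorem~\ref{thm.main1}, $\mathcal{L}_x((1-x)^j)=\sigma_j$ for every non-negative integer $j$. The entire identity will then follow by applying $\mathcal{L}_x$ to a single polynomial identity in the variable $x$, exactly as \eqref{eq.fltsd62} was used before.

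First I would establish the polynomial identity
\begin{equation*}
\sum_{k=0}^n (-1)^k \binom{n}{k} x^{k+m} = \sum_{k=0}^m (-1)^k \binom{m}{k} (1-x)^{k+n}.
\end{equation*}
Both sides are equal to the polynomial $x^m (1-x)^n$. Indeed, the left side is $x^m$ multiplied by the binomial expansion $(1-x)^n=\sum_{k=0}^n(-1)^k\binom{n}{k}x^k$, while the right side is $(1-x)^n$ multiplied by $\sum_{k=0}^m(-1)^k\binom{m}{k}(1-x)^k$.

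Next I would apply $\mathcal{L}_x$ to both sides, using linearity termwise. On the left, $\mathcal{L}_x(x^{k+m})=s_{k+m}$ reproduces $\sum_{k=0}^n(-1)^k\binom{n}{k}s_{k+m}$. On the right, the Lemma applied with exponent $k+n$ gives $\mathcal{L}_x((1-x)^{k+n})=\sigma_{k+n}$, hence $\sum_{k=0}^m(-1)^k\binom{m}{k}\sigma_{k+n}$, which is precisely the right-hand side of \eqref{hl4i644}. This completes the argument.

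The main obstacle—indeed the only nonroutine step—is spotting the reassembly $\sum_{k=0}^m(-1)^k\binom{m}{k}(1-x)^k = \bigl(1-(1-x)\bigr)^m = x^m$, which is what allows the right-hand side to collapse to the symmetric polynomial $x^m(1-x)^n$. Once one recognizes that the two sums are merely two binomial expansions of the single polynomial $x^m(1-x)^n$—one expanding the factor $(1-x)^n$, the other expanding the factor $x^m$—the operator $\mathcal{L}_x$ is simply being evaluated on one and the same object, and the stated equality is immediate.
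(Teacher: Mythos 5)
Your proposal is correct and matches the paper's proof exactly: the paper likewise applies the operator $\mathcal L_x$ (with $\mathcal L_x(x^k)=s_k$, hence $\mathcal L_x((1-x)^k)=\sigma_k$ by the lemma) to the polynomial identity $\sum_{k=0}^n(-1)^k\binom nk x^{k+m}=\sum_{k=0}^m(-1)^k\binom mk(1-x)^{k+n}$, which it calls a ``variation on the binomial theorem.'' Your added verification that both sides equal $x^m(1-x)^n$ is a correct justification of that identity, which the paper leaves implicit.
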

\begin{proof}
Let $\mathcal L_x(x^k)=s_k$ for every complex number $x$ and every non-negative integer $k$. Operate on both sides of the following variation on the binomial theorem with $\mathcal L_x$:
\begin{equation*}
\sum_{k = 0}^n {( - 1)^k \binom{{n}}{k}x^{k + m} }  = \sum_{k = 0}^m {( - 1)^k \binom{{m}}{k}\left( {1 - x} \right)^{_{k + n} } } .
\end{equation*}
\end{proof}

\begin{remark}
The version of~\eqref{hl4i644} involving a binomial-transform pair of the second kind, namely,
\begin{equation}\label{uwonf8d}
\sum_{k = 0}^n {\binom{{n}}{k}\bar s_{k + m} }  = \sum_{k = 0}^m {( - 1)^{k + m} \binom{{m}}{k}\bar \sigma _{k + n} } ,
\end{equation}
corresponds to Chen~\cite[Proof of Theorem 3.1, generalized in Theorem 3.2]{chen07}. See also Gould and Quaintance~\cite{gould14} for an alternative proof of~\eqref{uwonf8d}.
\end{remark}
We now present a generalization of~\eqref{main1}.
\begin{theorem}\label{thm.hr1s9kh}
Let $m$, $n$ and $r$ be non-negative integers. If $\{(s_k), (\sigma_k)\}$ and $\{(t_k), (\tau_k)\}$, $k=0,1,2,\ldots$, are binomial-transform pairs of the first kind, then
\begin{align}\label{uq1kbw0}
&\sum_{k = 0}^n {( - 1)^k \binom{{n}}{k}s_{n - k + m} \sum_{q = 0}^r {( - 1)^q \binom{{r}}{q}\tau _{k + q} } }\nonumber\\
&\qquad  = \sum_{k = 0}^n {( - 1)^k \binom{{n}}{k}t_{n - k + r} \sum_{p = 0}^m {( - 1)^p \binom{{m}}{p}\sigma _{k + p} } } .
\end{align}

\end{theorem}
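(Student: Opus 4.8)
The plan is to reduce~\eqref{uq1kbw0} to the binomial convolution identity~\eqref{main1} of Theorem~\ref{thm.main1} by manufacturing two new binomial-transform pairs of the first kind out of the inner sums, the key tool being the symmetry relation~\eqref{hl4i644}. Write $\Phi_k:=\sum_{q=0}^{r}(-1)^q\binom{r}{q}\tau_{k+q}$ and $\Psi_k:=\sum_{p=0}^{m}(-1)^p\binom{m}{p}\sigma_{k+p}$, so that the two sides of~\eqref{uq1kbw0} read $\sum_{k=0}^n(-1)^k\binom{n}{k}s_{n-k+m}\Phi_k$ and $\sum_{k=0}^n(-1)^k\binom{n}{k}t_{n-k+r}\Psi_k$, respectively.

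First I would identify the pair $\{(s_{k+m}),(\Psi_k)\}$. Applying~\eqref{hl4i644} to $\{(s_k),(\sigma_k)\}$ with its free index specialized to $k$ and shift $m$ gives $\sum_{j=0}^{k}(-1)^j\binom{k}{j}s_{j+m}=\sum_{p=0}^{m}(-1)^p\binom{m}{p}\sigma_{p+k}=\Psi_k$, which is precisely the statement that $(\Psi_k)$ is the first-kind binomial transform of $(s_{k+m})$; hence $\{(s_{k+m}),(\Psi_k)\}$ is a pair of the first kind. In the same way, applying~\eqref{hl4i644} to $\{(t_k),(\tau_k)\}$ with shift $r$ gives $\sum_{j=0}^{k}(-1)^j\binom{k}{j}t_{j+r}=\sum_{q=0}^{r}(-1)^q\binom{r}{q}\tau_{q+k}=\Phi_k$; since the first-kind transform is involutive, $\{(\Phi_k),(t_{k+r})\}$ is also a pair of the first kind.

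It then remains to feed these into~\eqref{main1}. Taking $\{(s_{k+m}),(\Psi_k)\}$ for the first pair and $\{(\Phi_k),(t_{k+r})\}$ for the second, Theorem~\ref{thm.main1} yields $\sum_{k=0}^n(-1)^{n-k}\binom{n}{k}s_{k+m}\Phi_{n-k}=\sum_{k=0}^n(-1)^k\binom{n}{k}\Psi_k\,t_{n-k+r}$. Reindexing the left-hand sum by $k\mapsto n-k$ turns it into $\sum_{k=0}^n(-1)^k\binom{n}{k}s_{n-k+m}\Phi_k$, and expanding $\Phi_k$ and $\Psi_k$ recovers exactly~\eqref{uq1kbw0}.

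The only real obstacle is the bookkeeping endemic to arguments of this kind: getting the shifts, the summation ranges, and the $k\mapsto n-k$ reindexing to line up, together with the correct invocation of~\eqref{hl4i644} (in particular remembering its involutive character when handling the $(\Phi_k)$ pair). As a self-contained alternative that sidesteps~\eqref{main1}, one can introduce the operators $\mathcal L_x(x^j)=s_j$ and $\mathcal L_y(y^j)=t_j$ of the preceding lemmas and verify directly that both sides of~\eqref{uq1kbw0} equal $\mathcal L_x\mathcal L_y\!\left[\,x^m y^r (x+y-1)^n\,\right]$, the inner $q$- and $p$-sums collapsing by the binomial theorem to $y^r(1-y)^k$ and $x^m(1-x)^k$ before the outer $k$-sum is evaluated.
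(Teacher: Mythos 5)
Your proof is correct and follows essentially the same route as the paper's: both use~\eqref{hl4i644} to recognize the shifted sequences and the inner sums (your $\Psi_k$ and $\Phi_k$) as binomial-transform pairs of the first kind, and then substitute into~\eqref{main1}. The one difference works in your favour: by feeding in the second pair in the reversed orientation $\{(\Phi_k),(t_{k+r})\}$ --- legitimate, since the first-kind transform is an involution --- you land directly on~\eqref{uq1kbw0}, whereas the paper substitutes both pairs in the same orientation, arrives at the companion identity $\sum_{k}(-1)^{n-k}\binom nk s_{k+m}t_{n-k+r}=\sum_{k}(-1)^{k}\binom nk \Psi_k\Phi_{n-k}$, and then asserts it ``can also be written as''~\eqref{uq1kbw0}, a step that in fact requires exactly the swap you built in; your alternative via $\mathcal L_x\mathcal L_y\bigl[x^m y^r(x+y-1)^n\bigr]$ also checks out.
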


\begin{proof}
From~\eqref{hl4i644} it is clear that $s_{k+m}$ and $\sum_{p = 0}^m {( - 1)^p \binom{{m}}{p}\sigma _{k + p} } $ are a binomial-transform pair of the first kind. That is, if  $\{(s_k), (\sigma_k)\}$, $k=0,1,2,\ldots$, is a binomial-transform pair of the first kind, then so is $\{(a_k), (\alpha_k)\}$, $k=0,1,2,\ldots$,  where
\begin{equation}\label{w95ejl1}
a_k  = s_{k + m} \text{ and }\alpha _k  = \sum_{p = 0}^m {( - 1)^p \binom{{m}}{p}\sigma _{k + p} }. 
\end{equation}
Similarly, $\{(b_k), (\beta_k)\}$, $k=0,1,2,\ldots$ is a binomial transform pair of the first kind, where
\begin{equation}\label{h1u6s0i}
b_k  = t_{k + r} \text{ and }\beta _k  = \sum_{q = 0}^r {( - 1)^q \binom{{r}}{q}\tau _{k + q} } .
\end{equation}
Now write~\eqref{main1} as
\begin{equation}\label{jv6g76k}
\sum_{k = 0}^n {( - 1)^k \binom nka_{n-k} b_k }  = \sum_{k = 0}^n {( - 1)^k \binom{{n}}{k}\alpha _k \beta _{n-k} } ,
\end{equation}
and substitute~\eqref{w95ejl1} and~\eqref{h1u6s0i} to obtain
\begin{align}
&\sum_{k = 0}^n {( - 1)^{n - k} \binom{{n}}{k}s_{k + m} t_{n - k + r} }\nonumber\\
&  = \sum_{k = 0}^n {( - 1)^k \binom{{n}}{k}\sum_{p = 0}^m {( - 1)^p \binom{{m}}{p}\sigma _{k + p} \sum_{q = 0}^r {( - 1)^q \binom{{r}}{q}\tau _{n - k + q} } } } ,
\end{align}
which can also be written as~\eqref{uq1kbw0}.
\end{proof}
We illustrate Theorem~\ref{thm.hr1s9kh} with the result stated in Proposition~\ref{prop.ofdjag5}.
\begin{proposition}\label{prop.ofdjag5}
If $j$, $m$, $r$ and $u$ are complex numbers and $n$ is a non-negative integer, then
\begin{equation}
\sum_{k = 0}^n {\binom{{n}}{k}\binom{{n - k + m}}{u}\binom{{r}}{{j - k}}}  = \sum_{k = 0}^n {\binom{{n}}{k}\binom{{n - k + r}}{j}\binom{{m}}{{u - k}}} .
\end{equation}
\end{proposition}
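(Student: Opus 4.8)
The plan is to recognize Proposition~\ref{prop.ofdjag5} as a direct specialization of the general symmetry identity~\eqref{hl4i644}, applied to a suitable binomial-transform pair of the first kind built from generalized binomial coefficients. First I would identify the sequence. The factor $\binom{r}{j-k}$ summed against $(-1)^k\binom{m}{k}$ in~\eqref{hl4i644}, or one of its close variants, strongly resembles a Vandermonde-type convolution, so the natural candidate is to take $s_k=\binom{x+k}{u}$ or $s_k=\binom{r}{j-k}$ for appropriate parameters and compute its binomial transform $\sigma_k$ explicitly. The cleanest route is to recall the standard pair coming from~\eqref{eq.zw4rawn}, namely
\begin{equation*}
s_k=\binom{y-k}{x},\qquad \sigma_k=\binom{y-k}{y-x},
\end{equation*}
or a shifted/reflected version thereof, and match parameters so that the shifted sequences $s_{k+m}$ and $\sigma_{k+n}$ in~\eqref{hl4i644} reproduce the two sides of the claimed identity.

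Concretely, I would first establish, as a lemma or by direct use of~\eqref{eq.zw4rawn}, that the sequence $s_k=\binom{k+c}{u}$ has a binomial transform of the first kind expressible again as a single binomial coefficient in $k$. With that pair in hand, I would apply the symmetry relation~\eqref{hl4i644}: the left-hand side $\sum_{k=0}^n(-1)^k\binom{n}{k}s_{k+m}$ and the right-hand side $\sum_{k=0}^m(-1)^k\binom{m}{k}\sigma_{k+n}$ each collapse, via a second application of the Vandermonde-type identity~\eqref{eq.zw4rawn}, into the double-sum form appearing in the proposition. The key bookkeeping is to assign the five parameters $j,m,r,u$ and the summation index so that the roles of $(m,u)$ on one side and $(r,j)$ on the other are interchanged precisely as in the stated symmetric identity; this interchange is exactly the content of~\eqref{hl4i644}, which swaps the two finite binomial sums.

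An alternative and perhaps more transparent approach is to bypass the shift identity and instead invoke Theorem~\ref{thm.main1} directly. I would choose two pairs from~\eqref{eq.zw4rawn}, say
\begin{equation*}
s_k=\binom{y-k}{x},\quad \sigma_k=\binom{y-k}{y-x},\qquad t_k=\binom{w-k}{v},\quad \tau_k=\binom{w-k}{w-v},
\end{equation*}
substitute into~\eqref{main1}, and then translate the generalized-binomial parameters $x,y,v,w$ into $j,m,r,u$ by suitable substitutions and reflection formulas of the form $\binom{a}{b}=(-1)^b\binom{b-a-1}{b}$. Collecting the $(-1)$ factors and re-indexing would convert~\eqref{main1} into the purely positive-coefficient convolution of the proposition.

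The main obstacle I anticipate is parameter matching rather than any deep computation: the two binomial coefficients $\binom{n-k+m}{u}\binom{r}{j-k}$ on the left and $\binom{n-k+r}{j}\binom{m}{u-k}$ on the right must be produced with the correct signs, so the delicate step is tracking how each $(-1)^{k}$ and each reflection $\binom{a}{b}\mapsto(-1)^{b}\binom{b-a-1}{b}$ interacts when the index $k$ is shifted inside the binomial coefficients. Ensuring that all spurious signs cancel—so that the final identity has the uniformly positive coefficients claimed, valid for arbitrary complex $j,m,r,u$ by analytic continuation from the integer case—will require care but should present no conceptual difficulty once the correct pair is selected.
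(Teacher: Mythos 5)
Your proposal stops exactly where the proof begins: everything hinges on naming the two binomial-transform pairs explicitly, and you never do. The paper's own proof is a one-line specialization of \eqref{uq1kbw0} (Theorem~\ref{thm.hr1s9kh}): it takes $s_k=\binom{k}{u}$, $\sigma_k=(-1)^u\delta_{ku}$ and $t_k=\binom{k}{j}$, $\tau_k=(-1)^j\delta_{kj}$, the pairs furnished by \eqref{siusv41}. The Kronecker deltas then collapse the inner sums $\sum_{q}(-1)^q\binom{r}{q}\tau_{k+q}$ and $\sum_{p}(-1)^p\binom{m}{p}\sigma_{k+p}$ into the single factors $\binom{r}{j-k}$ and $\binom{m}{u-k}$, while the shifts supply $\binom{n-k+m}{u}$ and $\binom{n-k+r}{j}$; all signs cancel at once. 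Your first route, through \eqref{hl4i644} alone, is structurally mismatched: that identity equates a sum of $n+1$ single terms with a sum of $m+1$ single terms, whereas the proposition equates two convolutions over $k=0,\dots,n$ of \emph{products} of binomial coefficients (it is not a ``double-sum form''), and no second application of \eqref{eq.zw4rawn} converts one shape into the other.

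Your second route can be repaired, but not with the pair you name. The factors $\binom{r}{j-k}$ and $\binom{m}{u-k}$ have a fixed upper index and a $k$-varying lower index, which $\binom{y-k}{x}$ and $\binom{y-k}{y-x}$ cannot yield without reflections that reintroduce $k$-dependent signs \emph{and} $k$-dependent upper indices. What does work is the Vandermonde pair $\sigma_k=\binom{k+m}{u}$, $s_k=(-1)^k\binom{m}{u-k}$ (a pair of the first kind because $\sum_k(-1)^k\binom{n}{k}s_k=\sum_k\binom{n}{k}\binom{m}{u-k}=\binom{n+m}{u}=\sigma_n$), together with $t_k=\binom{k+r}{j}$, $\tau_k=(-1)^k\binom{r}{j-k}$; note the two pairs must be fed into \eqref{main1} with \emph{opposite} orientations. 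Then the left side of \eqref{main1} becomes $(-1)^n\sum_k\binom{n}{k}\binom{m}{u-k}\binom{n-k+r}{j}$ and the right side becomes $(-1)^n\sum_k\binom{n}{k}\binom{k+m}{u}\binom{r}{j-n+k}$, and reindexing $k\mapsto n-k$ gives the proposition. Identifying these pairs, choosing the correct orientation, and tracking the signs is precisely the ``parameter matching'' you defer to the end, so as written the proposal is a plan rather than a proof.
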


\begin{proof}
Use
\begin{equation*}
s_k  = \binom{{k}}{u},\quad\sigma _k  = ( - 1)^u \delta _{ku} ,
\end{equation*}
and
\begin{equation*}
t_k  = \binom{{k}}{j},\quad\tau_k  = ( - 1)^j \delta _{kj},
\end{equation*}
in~\eqref{uq1kbw0}.
\end{proof}

\begin{theorem}
Let $m$ and $r$ be non-negative integers. If $(s_k)$ and $(\sigma_k)$, $k=0,1,2,\ldots$, are a binomial-transform pair of the first kind, so are the sequences $(b_k)$ and $(\beta_k)$, $k=0,1,2,\dots$, where
\begin{equation}
b_k=\sum_{p = 0}^r {( - 1)^p \binom{{r}}{p}s _{k + p + m} } \text{ and } \beta_k=\sum_{p = 0}^m {( - 1)^p \binom{{m}}{p}\sigma _{k + p + r} }.
\end{equation}
Thus,
\begin{equation}
\sum_{k=0}^n{(-1)^k\binom nk\sum_{p = 0}^r {( - 1)^p \binom{{r}}{p}s _{k + p + m} }}=\sum_{k = 0}^m {( - 1)^k \binom mk\sigma _{n + k + r} }.
\end{equation}
\end{theorem}

\begin{proof}
Let $(s_k)$ and $(\sigma_k)$, $k=0,1,2,\ldots$, be a binomial transform pair of the first kind. From~\eqref{hl4i644}, for an arbitrary non-negative integer $m$, we recognize 
\begin{equation}\label{ngubu44}
s_{k+m} \text{ and } \sum_{p = 0}^m {( - 1)^p \binom{{m}}{p}\sigma _{k + p} },\quad k=0,1,2,\ldots,
\end{equation}
as a binomial-transform pair of the first kind.

Let $(t_k)$ and $(\tau_k)$, $k=0,1,2,\ldots$, be a binomial transform pair of the first kind. By the same token, for an arbitrary non-negative integer $r$,
\begin{equation}\label{f4dkuss}
t_{k+r} \text{ and } \sum_{p = 0}^r {( - 1)^p \binom{{r}}{p}\tau _{k + p} },\quad k=0,1,2,\ldots,
\end{equation}
are a binomial-transform pair of the first kind.

In~\eqref{ngubu44}, let
\begin{equation}\label{a5y59pa}
\tau_k=s_{k+m} \text{ and } t_k=\sum_{p = 0}^m {( - 1)^p \binom{{m}}{p}\sigma _{k + p} },\quad k=0,1,2,\ldots
\end{equation}
Thus, using~\eqref{a5y59pa}, the binomial transform pair given in~\eqref{f4dkuss} consists of
\begin{equation}
t_{k+r}=\sum_{p = 0}^m {( - 1)^p \binom{{m}}{p}\sigma _{k + p + r} }
\end{equation}
and
\begin{equation}
\sum_{p = 0}^r {( - 1)^p \binom{{r}}{p}\tau _{k + p} }=\sum_{p = 0}^r {( - 1)^p \binom{{r}}{p}s _{k + p + m} }.
\end{equation}
The proof is complete.
\end{proof}
In the next theorem, we present another generalization of~\eqref{main1}.
\begin{theorem}
Let $m$, $n$, $r$, $u$ and $v$ be non-negative integers. If $\{(s_k), (\sigma_k)\}$ and $\{(t_k), (\tau_k)\}$, $k=0,1,2,\ldots$, are binomial-transform pairs of the first kind, then
\begin{align}
&\sum_{k = 0}^n {( - 1)^k \binom{{n}}{k}\sum_{p = 0}^r {( - 1)^p \binom{{r}}{p}s_{n - k + p + m} } \sum_{j = 0}^u {( - 1)^j \binom{{u}}{j}t_{k + j + v} } }\nonumber\\
&\qquad  = \sum_{k = 0}^n {( - 1)^k \binom{{n}}{k}\sum_{p = 0}^m {( - 1)^p \binom{{m}}{p}\sigma _{k + p + r} } \sum_{j = 0}^v {( - 1)^j \binom{{v}}{j}\tau _{n - k + j + u} } } .
\end{align}
\end{theorem}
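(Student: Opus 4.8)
The plan is to read each double-sum factor in the statement as the binomial convolution of a \emph{constructed} pair, and then apply the master identity~\eqref{main1} to those pairs. The immediately preceding theorem supplies exactly the construction needed: for $\{(s_k),(\sigma_k)\}$ and the fixed non-negative integers $m,r$, the sequences
\begin{equation*}
a_k  = \sum_{p = 0}^r {( - 1)^p \binom{{r}}{p}s _{k + p + m} }, \qquad \alpha _k  = \sum_{p = 0}^m {( - 1)^p \binom{{m}}{p}\sigma _{k + p + r} }
\end{equation*}
form a binomial-transform pair of the first kind. Applying the same construction to $\{(t_k),(\tau_k)\}$, but with the parameters $(r,m)$ replaced by $(u,v)$, shows that
\begin{equation*}
b_k  = \sum_{j = 0}^u {( - 1)^j \binom{{u}}{j}t _{k + j + v} }, \qquad \beta _k  = \sum_{j = 0}^v {( - 1)^j \binom{{v}}{j}\tau _{k + j + u} }
\end{equation*}
are likewise a binomial-transform pair of the first kind.

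Next I would feed $\{(a_k),(\alpha_k)\}$ and $\{(b_k),(\beta_k)\}$ directly into~\eqref{main1} to obtain
\begin{equation*}
\sum_{k = 0}^n {( - 1)^{n - k} \binom{{n}}{k}a_k b_{n-k} }  = \sum_{k = 0}^n {( - 1)^k \binom{{n}}{k}\alpha _k \beta _{n-k} } .
\end{equation*}
Expanding $\alpha_k$ and $\beta_{n-k}$ back into their defining inner sums, the right-hand side here already reproduces, term for term, the right-hand side claimed in the theorem (the shift in $\beta_{n-k}$ yielding precisely $\tau_{n-k+j+u}$). So the right-hand side requires no further manipulation.

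To finish I would bring the left-hand side into the stated shape by the reindexing $k\mapsto n-k$ in the outer summation, using $\binom{n}{n-k}=\binom{n}{k}$ and $(-1)^{n-(n-k)}=(-1)^k$; this turns $a_{n-k}b_{k}$ into the pair of inner sums $\sum_p(-1)^p\binom{r}{p}s_{n-k+p+m}$ and $\sum_j(-1)^j\binom{u}{j}t_{k+j+v}$ on the left of the claim. The only real care needed is the index bookkeeping: one must keep straight that in the preceding theorem the two shift parameters are \emph{interchanged} between a sequence and its transform, so that the $(t_k)$-construction attaches the upper limit $u$ to the $t$-sum but $v$ to the $\tau$-sum. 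I expect this labelling to be the sole obstacle; once it is fixed, the substitution $k\mapsto n-k$ closes the argument with no additional computation.
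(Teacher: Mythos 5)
Your proof is correct and follows essentially the same route as the paper: the paper likewise invokes the preceding theorem to form the shifted pairs $\{(a_k),(\alpha_k)\}$ and $\{(b_k),(\beta_k)\}$ and substitutes them into the reindexed form of~\eqref{main1} (its equation~\eqref{jv6g76k}), which already incorporates the $k\mapsto n-k$ swap you perform at the end. Your bookkeeping of the interchanged shift parameters is also exactly as in the paper.
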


\begin{proof}
In~\eqref{jv6g76k} substitute
\begin{equation*}
a_k=\sum_{p = 0}^r {( - 1)^p \binom{{r}}{p}s _{k + p + m} },\quad \alpha_k=\sum_{p = 0}^m {( - 1)^p \binom{{m}}{p}\sigma _{k + p + r} },
\end{equation*}
and
\begin{equation*}
b_k=\sum_{j = 0}^u {( - 1)^j \binom{{u}}{j}t _{k + j + v} },\quad \beta_k=\sum_{j = 0}^v {( - 1)^j \binom{{v}}{j}\tau _{k + j + u} }.
\end{equation*}
\end{proof}

\section{Relations between binomial transforms}\label{sec.relations}
We derive various relations between binomial-transform pairs of both kinds. We develop relations which allow one to construct new binomial-transform pairs from known ones. Theorems~\ref{thm.v41e7ah} to~\ref{thm.u5nhiva} are derived from the binomial theorem.

\begin{theorem}\label{thm.v41e7ah}
If $(t_k)$ and $(\tau_k)$, $k=0,1,2,\dots$, are a binomial-transform pair of the first kind, then so are the sequences $(a_k)$ and $(\alpha_k)$, $k=r,r+1,r+2,\dots$, where
\begin{equation*}
a_k  = \binom{{k}}{r}t_{k - r} \text{ and }\alpha _k  = (-1)^r\binom{{k}}{r}\tau _{k - r} .
\end{equation*}

\end{theorem}
In particular, $(kt_{k-1})$ and $(-k\tau_{k-1})$, $k=1,2,3,\ldots$, constitute a binomial-transform pair of the first kind. 

\begin{proof}
Differentiate the binomial theorem 
\begin{equation}\label{binomial}
\sum_{k = 0}^n {( - 1)^k \binom{{n}}{k}x^k }  = \left( {1 - x} \right)^n ,
\end{equation}
$r$ times with respect to $x$ to obtain
\begin{equation*}
\sum_{k = r}^n {( - 1)^k \binom{{n}}{k}\binom{{k}}{r}x^{k - r} }  = ( - 1)^r \binom{{n}}{r}\left( {1 - x} \right)^{n - r} 
\end{equation*}
and operate on both sides with $\mathcal L_x$.
\end{proof}

\begin{theorem}\label{thm.cijxizk}
Let $n$ and $r$ be non-negative integers. If $(t_k)$ and $(\tau_k)$, $k=0,1,2,\dots$, are a binomial-transform pair of the first kind, then so are the sequences $(a_k)$ and $(\alpha_k)$, $k=r,r+1,r+2,\dots$, where
\begin{equation*}
a_k  = \binom{{k}}{r}t_k \text{ and }\alpha _k  = \binom{{k}}{r}\sum_{j = 0}^r {( - 1)^j \binom{{r}}{j}\tau _{k - j} } .
\end{equation*}
Thus,
\begin{equation}\label{zuiou6h}
\sum_{k = r}^n {( - 1)^k \binom{{n}}{k}\binom{{k}}{r}t_k }  = \binom{n}{r}\sum_{k = 0}^r {( - 1)^k \binom{{r}}{k}\tau _{n - k} } .
\end{equation}
\end{theorem}

\begin{proof}
Write the binomial theorem as
\begin{equation*}
\sum_{k = 0}^n {( - 1)^k \binom{{n - r}}{k}x^k }  = \left( {1 - x} \right)^{n - r},\quad n\ge r
\end{equation*}
and use the fact that
\begin{equation*}
\binom{{n - r}}{k}\binom{{n}}{r} = \binom{{n - k}}{r}\binom{{n}}{k}
\end{equation*}
to obtain
\begin{equation*}
\sum_{k = 0}^n {( - 1)^k \binom{{n}}{k}\binom{{k}}{r}x^{ - k} }  = ( - 1)^n \binom{{n}}{r}x^{ - n} \left( {1 - x} \right)^{n - r} .
\end{equation*}
Now write $1/x$ for $x$, simplify and replace $x$ with $1-x$ to get
\begin{equation}\label{my5n6de}
\sum_{k = 0}^n {( - 1)^k \binom{{n}}{k}\binom{{k}}{r}\left( {1 - x} \right)^k }  = \binom nr\sum_{k = 0}^r {( - 1)^k \binom{{r}}{k}x^{n - k} },
\end{equation}
from which~\eqref{zuiou6h} follows.
\end{proof}

\begin{theorem}\label{thm.u5nhiva}
If $(t_k)$ and $(\tau_k)$, $k=0,1,2,\dots$, are a binomial-transform pair of the first kind, then so are the sequences $(a_k)$ and $(\alpha_k)$, $k=0,1,2,\dots$, where
\begin{equation}\label{i2f6ya8}
a_k  = \frac{{t_{k + 1} }}{{k + 1}}\text{ and }\alpha _k  = \frac{{\tau _0  - \tau _{k + 1} }}{{k + 1}} .
\end{equation}
Thus,
\begin{equation}\label{xrpbz0j}
\sum_{k = 0}^n {( - 1)^k \binom{{n}}{k}\frac{{t_{k + 1} }}{{k + 1}}}  = \frac{{\tau _0  - \tau _{n + 1} }}{{n + 1}}.
\end{equation}
\end{theorem}

\begin{proof}
Integrate the binomial theorem~\eqref{binomial} from $0$ to $y$ and replace $y$ with $x$. This gives
\begin{equation*}
\sum_{k = 0}^n {( - 1)^k \binom{{n}}{k}\frac{{\left( {1 - x} \right)^{k + 1} }}{{k + 1}}}  = \frac{{1 - x^{n + 1} }}{{n + 1}}.
\end{equation*}
Now act on both sides with $\mathcal L_x$ to obtain~\eqref{xrpbz0j}.
\end{proof}

\begin{remark}
Each repeated application of Theorem~\ref{thm.u5nhiva} produces a new binomial-transform pair of the first kind. Given the binomial-transform pair $\{(t_k),(\tau_k)\}$, $k=0,1,2,\ldots$ and starting with~\eqref{i2f6ya8} as
\begin{equation*}
t_k^{(1)}  = \frac{{t_{k + 1} }}{{k + 1}}\text{ and }\tau _k^{(1)}  = \frac{{\tau _0  - \tau _{k + 1} }}{{k + 1}},
\end{equation*}
we can derive a new binomial transform pair, namely $\{t_k^{(2)},\tau_k^{(2)}\}$, for $k=0,1,2,\ldots$, where
\begin{equation}\label{z7tq26h}
t_k^{(2)}  = \frac{{t_{k + 1}^{(1)} }}{{k + 1}}=\frac{t_{k+2}}{(k+1)(k+2)},
\end{equation}
and
\begin{equation}\label{lvddhnp}
\tau _k^{(2)}  = \frac{{\tau _0^{(1)}  - \tau _{k + 1}^{(1)} }}{{k + 1}} = \frac{{\tau _0  - \tau _1 }}{{k + 1}} - \frac{{\left( {\tau _0  - \tau _{k + 2} } \right)}}{{(k + 1)(k + 2)}}.
\end{equation}
From~\eqref{z7tq26h} and~\eqref{lvddhnp} we can develop $\{t_k^{(3)},\tau_k^{(3)}\}$, for $k=0,1,2,\ldots$, where
\begin{equation}
t_k^{(3)}  = \frac{{t_{k + 1}^{(2)} }}{{k + 1}} = \frac{{t_{k + 3} }}{{(k + 1)(k + 2)(k + 3)}}
\end{equation}
and
\begin{align}
\tau _k^{(3)}  &= \frac{{\tau _0^{(2)}  - \tau _{k + 1}^{(2)} }}{{k + 1}}\nonumber\\
&= \frac{{\tau _0  - \tau _1 }}{{k + 1}} - \frac{{\tau _0  - \tau _2 }}{{2(k + 1)}} - \frac{{\tau _0  - \tau _1 }}{{(k + 1)(k + 2)}} + \frac{{\tau _0  - \tau _{k + 3} }}{{(k + 1)(k + 2)(k + 3)}}.
\end{align}
\end{remark}
The consequence of~\ref{thm.u5nhiva} stated in Corollary~\ref{cor.f3kb0ir} can be proved by induction.

\begin{corollary}\label{cor.f3kb0ir}
Let $\{(t_k),(\tau_k)\}$, $k=0,1,2,\ldots$, be a binomial-transform pair of the first kind. If $\tau_0=0=\tau_1=\cdots=\tau_{r-1}$, then $(t_k^{(r)})$ and $(\tau_k^{(r)})$ are a binomial transform pair of the first kind, where
\begin{equation}
t_k^{(r)}  = \frac{{t_{k + r} }}{{\left( {k + 1} \right)\left( {k + 2} \right) \cdots \left( {k + r} \right)}}\text{ and }\tau_k^{(r)}  = \frac{{( - 1)^r \tau _{k + r} }}{{\left( {k + 1} \right)\left( {k + 2} \right) \cdots \left( {k + r} \right)}},\quad r\in\mathbb Z^+,
\end{equation}
or more compactly,
\begin{equation}
t_k^{(r)}  = \binom{{k + r}}{r}^{ - 1} t_{k + r} \text{ and }\tau _k^{(r)}  = ( - 1)^r \binom{{k + r}}{r}^{ - 1} \tau _{k + r} .
\end{equation}
\end{corollary}

\begin{example}
From the identity~\cite[Identity (6.17)]{graham}:
\begin{equation}
\sum_{k = 0}^n {( - 1)^k \binom{{n}}{k}\braces{{ k + 1}}{{m + 1}}}  = ( - 1)^n \braces{{ n}}{m},
\end{equation}
we identify the binomial-transform pair $\{(t_k),(\tau_k)\}$, $k=0,1,2,\ldots$, where
\begin{equation}\label{v0j8dgg}
t_k  = \braces{{ k + 1}}{{m + 1}}\text{ and }\tau _k  = ( - 1)^k \braces{{ k}}{m},\quad m=0,1,2,\ldots.
\end{equation}
Let $r$ be a positive integer. By~\eqref{urlbi68}, we have that if $m>r-1$, then $\tau_0=0=\tau_1=\tau_2=\cdots=\tau_{r-1}$. Using this fact in Corollary~\ref{cor.f3kb0ir} gives the result stated in Proposition~\ref{prop.ef6zwl0}.
\begin{proposition}\label{prop.ef6zwl0}
Let $m$ and $r$ be non-negative integers such that $m\ge r$. Then $(t_k^{(r)})$ and $(\tau_k^{(r)})$ are a binomial-transform pair of the first kind, where
\begin{equation}
t_k^{(r)}  = \braces{{ k + r + 1}}{{m + 1}}\binom{{k + r}}{r}^{ - 1} \text{ and }\tau _k^{(r)}  = ( - 1)^k \braces{{ k + r}}{m}\binom{{k + r}}{r}^{ - 1} .
\end{equation}
Thus,
\begin{equation}
\sum_{k = 0}^n {( - 1)^k \binom{{n}}{k}\braces{{ k + r + 1}}{{m + 1}}\binom{{k + r}}{r}^{ - 1} }  = ( - 1)^n \braces{{ n + r}}{m}\binom{{n + r}}{r}^{ - 1} .
\end{equation}
In particular,
\begin{equation}
\sum_{k = 0}^n {( - 1)^k \binom{{n}}{k}\braces{{ k + r + 1}}{{r + 1}}\binom{{k + r}}{r}^{ - 1} }  = ( - 1)^n \braces{{ n + r}}{r}\binom{{n + r}}{r}^{ - 1} .
\end{equation}
\end{proposition}
\end{example}

\begin{example}
Consider the following identity~\cite{batir23}:
\begin{equation}
\sum_{k = 0}^n {( - 1)^k \binom{{n}}{k}\binom{{k}}{m}}\binom{{s + k}}{k}  = ( - 1)^n \binom{{n}}{m}\binom{{s + m}}{n},
\end{equation}
from which we identify the binomial-transform pair of the first kind $\{(t_k),(\tau_k)\}$, $k=0,1,2,\ldots$, where
\begin{equation}\label{omb8ayt}
t_k  = ( - 1)^k \binom{{k}}{m}\binom{{s + m}}{k}\text{ and }\tau _k  = \binom{{k}}{m}\binom{{s + k}}{k}.
\end{equation}
Use of~\eqref{omb8ayt} in Corollary~\ref{cor.f3kb0ir} produces the next result.
\begin{proposition}
Let $s$ be a complex number that is not a negative integer. Let $m$ and $r$ be non-negative integers such that $m\ge r$. Then $(t_k^{(r)})$ and $(\tau_k^{(r)})$ are a binomial-transform pair of the first kind, where
\begin{equation}
t_k^{(r)}  = ( - 1)^k \binom{{k + r}}{m}\binom{{k + r}}{k}^{ - 1} \binom{{s + m}}{{k + r}}\text{ and }\tau _k^{(r)}  = \binom{{k + r}}{m}\binom{{k + r}}{k}^{ - 1} \binom{{k + s + r}}{s}.
\end{equation}
Thus,
\begin{equation}
\sum_{k = 0}^n {\binom nk \binom{{k + r}}{m}\binom{{k + r}}{k}^{ - 1} \binom{{s + m}}{{k + r}}}  = \binom{{n + r}}{m}\binom{{n + r}}{n}^{ - 1} \binom{{n + s + r}}{s}.
\end{equation}
In particular,
\begin{equation}
\sum_{k = 0}^n {\binom nk\binom{{s + r}}{{k + r}}}  = \binom{{n + s + r}}{s}.
\end{equation}
\end{proposition}

\end{example}

\begin{theorem}\label{thm.ali2l2f}
Let $\{(a_k),(\alpha_k)\}$, $k=0,1,2,\ldots$, be a binomial-transform pair of the first kind. Let $r$ be a non-negative integer. Then $\{(t_k),(\tau_k)\}$, $k=0,1,2,\ldots$, is also a binomial-transform pair of the first kind, where
\begin{equation}
t_k  = \binom{{k + r}}{r}^{ - 1} a_{k + r} 
\end{equation}
and
\begin{equation}
\tau _k  = ( - 1)^r \binom{{k + r}}{r}^{-1}\alpha _{k+r}  - ( - 1)^r \binom{{k + r}}{r}^{-1}\sum_{j = 0}^{r - 1} {( - 1)^j \binom{{k + r}}{j}a_j } .
\end{equation}

\end{theorem}

\begin{proof}
Let $r$ be a non-negative integer. Let $(a_k)$ be a sequence of complex numbers, where
\begin{equation}
a_k  =  
\begin{cases}
 0,&\text{if $k < r$;} \\ 
 x^k ,&\text{if $k \ge r$;} \\ 
\end{cases}
\end{equation}
where $x$ is a complex number.
The inverse transform of $(a_k)$ is computed as
\begin{align*}
\alpha _k  &= \sum_{j = 0}^k {( - 1)^j \binom{{k}}{j}a_j }\\
&= \sum_{j = 0}^k {( - 1)^j \binom{{k}}{j}x^j }  - \sum_{j = 0}^{r - 1} {( - 1)^j \binom{{k}}{j}x^j }\\ 
&= \left( {1 - x} \right)^k  - \sum_{j = 0}^{r - 1} {( - 1)^j \binom{{k}}{j}x^j } .
\end{align*}
By Corollary~\ref{cor.f3kb0ir}, the sequences $(a_k^{'})$ and $(\alpha_k^{'})$ where
\begin{equation}
a_k^{'}= \binom{k + r}{r}^{ - 1} a_{k+r} \text{ and } \alpha_k^{'}=( - 1)^r \binom{k + r}{r}^{ - 1} \alpha _{k+r}
\end{equation}
are a binomial-transform pair of the first kind. Thus,
\begin{align}\label{oftxyh6}
&\sum_{j = 0}^k {( - 1)^j \binom{k}{j}\binom{{j + r}}{r}^{ - 1} x^{j + r} } \nonumber\\
& = ( - 1)^r \binom{{j + r}}{r}^{-1}\left( {1 - x} \right)^{k+r}  - ( - 1)^r \binom{{k + r}}{r}^{ - 1} \sum_{j = 0}^{r - 1} {( - 1)^j \binom{{k + r}}{j}x^j } .
\end{align}
The statement of the theorem now follows upon application of Lemma~\ref{lem.qqdca04} to~\eqref{oftxyh6}.
\end{proof}

\begin{remark}
Identity~\eqref{oftxyh6} is entry (4.13) in Gould's book~\cite[p.47]{gould}.
\end{remark}

\begin{example}
If we consider the anti-self-inverse sequence $(a_k)$ and $(\alpha_k)$, where $a_k=F_k=-\alpha_k$, we see on account of Theorem~\ref{thm.ali2l2f} that $t_k$ and $\tau_k$ are a binomial-transform pair of the first kind, where
\begin{equation}
t_k  = \binom{{k + r}}{r}^{ - 1} 
\end{equation}
and
\begin{equation}
\tau _k  = ( - 1)^{r - 1} \binom{{k + r}}{r}^{ - 1} F_{k + r}  - ( - 1)^r \binom{{k + r}}{r}^{ - 1} \sum_{j = 0}^{r - 1} {( - 1)^j \binom{{k + r}}{r}F_j } .
\end{equation}
Thus,
\begin{align}
&\sum_{k = 0}^n {( - 1)^k \binom{{n}}{k}\binom{{k + r}}{r}^{ - 1} F_{k + r} }\nonumber\\
&\qquad = ( - 1)^{r - 1} \binom{{n + r}}{r}^{ - 1} F_{n + r}  - ( - 1)^r \binom{{n + r}}{r}^{ - 1} \sum_{j = 0}^{r - 1} {( - 1)^j \binom{{n + r}}{j}F_j } ;
\end{align}
with the corresponding result for the Lucas sequence being
\begin{align}
&\sum_{k = 0}^n {( - 1)^k \binom{{n}}{k}\binom{{k + r}}{r}^{ - 1} L_{k + r} }\nonumber\\
&\qquad = ( - 1)^r \binom{{n + r}}{r}^{ - 1} L_{n + r}  - ( - 1)^r \binom{{n + r}}{r}^{ - 1} \sum_{j = 0}^{r - 1} {( - 1)^j \binom{{n + r}}{j}L_j } ;
\end{align}

\end{example}

\begin{proposition}
Let $n$ be a non-negative integer. If $\{(\bar t_k), (\bar\tau_k)\}$ is a binomial-transform pair of the second kind, then
\begin{equation}\label{ps67scn}
\sum_{k = 0}^n {\bar \tau _k }  = \sum_{k = 0}^n {\binom{{n + 1}}{{k + 1}}\bar t_k } .
\end{equation}

\end{proposition}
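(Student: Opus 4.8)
The plan is to expand the left-hand side using the definition of the binomial transform of the second kind, interchange the order of summation, and then collapse the resulting inner sum via the hockey-stick (column-summation) identity for binomial coefficients.

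First I would invoke~\eqref{second}, which tells us that because $\{(\bar t_k),(\bar\tau_k)\}$ is a binomial-transform pair of the second kind, we have $\bar\tau_k=\sum_{j=0}^k\binom kj\bar t_j$ for each $k$. Substituting this into the left-hand side of~\eqref{ps67scn} produces the double sum
\begin{equation*}
\sum_{k=0}^n\bar\tau_k=\sum_{k=0}^n\sum_{j=0}^k\binom kj\bar t_j .
\end{equation*}

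Next I would interchange the order of summation. The index pairs $(k,j)$ appearing here are exactly those with $0\le j\le k\le n$, so holding $j$ fixed lets $k$ range from $j$ to $n$, giving
\begin{equation*}
\sum_{k=0}^n\bar\tau_k=\sum_{j=0}^n\bar t_j\sum_{k=j}^n\binom kj .
\end{equation*}
The final step is to evaluate the inner column sum by the hockey-stick identity $\sum_{k=j}^n\binom kj=\binom{n+1}{j+1}$; after relabelling $j$ as $k$ this is precisely the right-hand side of~\eqref{ps67scn}, completing the argument.

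The only ingredient beyond routine manipulation is the hockey-stick identity, so that is where I would focus any additional justification. If I did not wish to cite it as standard, I would prove it by a short induction on $n$: the base case $n=j$ is trivial, and the inductive step follows from Pascal's rule $\binom{n+1}{j}+\binom{n+1}{j+1}=\binom{n+2}{j+1}$ applied to the telescoping partial sums. I do not anticipate a genuine obstacle here; the swap of summation order is legitimate because the double sum is finite, and the collapse of the inner sum is the one identity doing the real work.
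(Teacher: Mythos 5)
Your proof is correct, but it follows a different route from the paper's. The paper obtains the identity as a one-line specialization of an earlier, more elaborate result: it sets $m=n$ and $p=0$ in identity~\eqref{a2qzajx}, which was itself derived by feeding Trif's inverse-binomial-coefficient identity~\eqref{foz9j01} into the convolution theorem~\eqref{o5suprg}. Your argument instead goes back to first principles: expand $\bar\tau_k=\sum_{j=0}^k\binom kj\bar t_j$ from the definition~\eqref{second}, interchange the (finite) double sum, and collapse the inner column sum $\sum_{k=j}^n\binom kj=\binom{n+1}{j+1}$ by the hockey-stick identity. Both are valid; the paper's derivation showcases the machinery it has built and places the result in the web of identities flowing from Theorem~\ref{thm.njtayqa}, whereas yours is self-contained, more elementary, and makes transparent exactly which classical fact ($\sum_{k=j}^n\binom kj=\binom{n+1}{j+1}$) is doing the work. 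Your verification of the summation swap and the optional induction for the hockey-stick identity are both sound, so there is no gap.
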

In particular, if $x$ is a complex variable, then
\begin{equation}\label{vz664g0}
\sum_{k = 0}^n {(1+x)^k }  = \sum_{k = 0}^n {\binom{{n + 1}}{{k + 1}}x^k } .
\end{equation}
\begin{proof}
Set $m=n$ and $p=0$ in~\eqref{a2qzajx}.
\end{proof}

\begin{remark}
Shifting the summation index, identity~\eqref{ps67scn} can also be written as
\begin{equation}\label{oy8uhm0}
\sum_{k = 1}^n {\bar \tau _{k-1} }  = \sum_{k = 1}^n {\binom nk\bar t_{k-1} } ;
\end{equation}
which for a binomial transform pair of the first kind $\{(t_k),(\tau_k)\}$, $k=0,1,2,\ldots$, also means
\begin{equation}\label{xc556tq}
\sum_{k = 1}^n { \tau _{k-1} }  = \sum_{k = 1}^n {(-1)^{k-1}\binom nk t_{k-1} } ,
\end{equation}
in view of Remark~\ref{rem.r7dv221}.
\end{remark}

\begin{theorem}\label{thm.hq03eji}
If $(t_k)$ and $(\tau_k)$, $k=0,1,2,\dots$, are a binomial-transform pair of the first kind, then so are the sequences $(a_k)$ and $(\alpha_k)$, $k=0,1,2,\dots$, where
\begin{equation*}
a_k=- \sum_{j = 1}^k {\tau _{j - 1} } \text{ and } \alpha_k=(1 - \delta _{k0} )t_{k - 1 + \delta _{k0} }.
\end{equation*}

\end{theorem}
\begin{proof}
An immediate consequence of~\eqref{xc556tq}.
\end{proof}

\begin{example}
Sequences $(L_{k+1}-1)$ and $(-L_{k-1})$, $k=1,2,\ldots$, are a binomial-transform pair of the first kind since, if we take $t_k=L_k=\tau_k$, we have
\begin{equation*}
\sum_{j=1}^k L_{j-1}=L_{k+1}-1.
\end{equation*}

Similarly, $(F_{k+1}-1)$ and $(F_{k-1})$, $k=1,2,\ldots$, are a binomial-transform pair of the first kind.
\end{example}

\begin{theorem}\label{thm.qpev64c}
If $(t_k)$ and $(\tau_k)$, $k=0,1,2,\dots$, are a binomial-transform pair of the first kind, then so are the sequences $(b_k)$ and $(\beta_k)$, $k=0,1,2,\dots$, where
\begin{equation*}
b_k=\sum_{j = 1}^{k - 1 + \delta _{k0} } {t_{j - 1} } \text{ and } \beta_k=\sum_{j = 1}^{k - 1 + \delta _{k0} } {\tau _{j - 1} }; 
\end{equation*}
so that
\begin{equation}\label{vcrpiix}
\sum_{k = 1}^n {( - 1)^k \binom{{n}}{k}\sum_{j = 1}^{k - 1} {t_{j - 1} } }  = \sum_{k = 1}^{n - 1} {\tau _{k - 1} },\quad n\in\mathbb Z^+ .
\end{equation}

\end{theorem}
In particular, if $(t_k)$, $k=0,1,2,\dots$, is a self-inverse sequence, so is the sequence $\left(\sum_{j = 1}^{k - 1 + \delta _{k0} } {t_{j - 1} }\right)$.

\begin{proof}
With Theorem~\ref{thm.hq03eji} in mind, let $\{(a_k),(\alpha_k)\}$, $k=0,1,2,\dots$, be a binomial-transform pair of the first kind, where
\begin{equation}\label{ifevf20}
a_k  =  - \sum_{j = 1}^k {\tau _{j - 1} },\quad\alpha _k  = \left( {1 - \delta _{k0} } \right)t_{k - 1 + \delta _{k0} } ,
\end{equation}
and $\{(t_k),(\tau_k)\}$, $k=0,1,2,\dots$, is a binomial-transform pair of the first kind.

By the same Theorem~\ref{thm.hq03eji}, $ - \sum_{j = 1}^k {\alpha _{j - 1} }$ and $\left( {1 - \delta _{k0} } \right)a_{k - 1 + \delta _{k0} }$ are a binomial-transform pair of the first kind.

Using~\eqref{ifevf20}, we have
\begin{align*}
 - \sum_{j = 1}^k {\alpha _{j - 1} }  &=  - \sum_{j = 1}^k {\left( {1 - \delta _{j1} } \right)t_{j - 2 + \delta _{j1} } } \\
& =  - \sum_{j = 1}^k {t_{j - 2 + \delta _{j1} } }  + \sum_{j = 1}^k {\delta _{j1} t_{j - 2 + \delta _{j1} } } \\
& =  - t_0  - \sum_{j = 2}^k {t_{j - 2} }  + t_0 \\
 &=  - \sum_{j = 2}^k {t_{j - 2} }  =  - \sum_{j = 1}^{k - 1} {t_{j - 1} } .
\end{align*}
Since the left side is an empty sum for $k=0$, in order to preserve equality, we therefore write
\begin{equation*}
- \sum_{j = 1}^k {\alpha _{j - 1} }= - \sum_{j = 1}^{k - 1 + \delta_{k0}} {t_{j - 1} } .
\end{equation*}
From~\eqref{ifevf20}, we also have
\begin{equation*}
\left( {1 - \delta _{k0} } \right)a_{k - 1 + \delta _{k0} }  =  - \left( {1 - \delta _{k0} } \right)\sum_{j = 1}^{k - 1 + \delta _{k0} } {\tau _{j - 1} }  =  - \sum_{j = 1}^{k - 1 + \delta _{k0} } {\tau _{j - 1} } .
\end{equation*}
\end{proof}

\begin{theorem}\label{thm.hn3cm79}
If $(t_k)$ and $(\tau_k)$, $k=0,1,2,\dots$, are a binomial-transform pair of the first kind, then so are the sequences $(a_k)$ and $(\alpha_k)$, $k=0,1,2,\dots$, where
\begin{equation}\label{abwtmhn}
a_k=\frac{1}{{\left( {k + 1} \right)\left( {k + 2} \right)}}\sum\limits_{j = 0}^k {t_j }\text{ and } \alpha_k=\frac{1}{{\left( {k + 1} \right)\left( {k + 2} \right)}}\sum\limits_{j = 0}^k {\tau _j }.
\end{equation}

\end{theorem}
In particular, if $(t_k)$, $k=0,1,2,\dots$, is a self-inverse sequence, so is the sequence
\begin{equation*}
\left(\frac{1}{{\left( {k + 1} \right)\left( {k + 2} \right)}}\sum\limits_{j = 0}^k {t_j }\right).
\end{equation*}

\begin{proof}
By shifting the index, identity~\eqref{vcrpiix} can be written as
\begin{equation*}
\sum_{k = 0}^n {( - 1)^k \binom{{n + 2}}{{k + 2}}\sum_{j = 0}^k {t_j } }  = \sum_{k = 0}^n {\tau _k } ,
\end{equation*}
from which~\eqref{abwtmhn} follows since
\begin{equation*}
\binom{{n + 2}}{{k + 2}} = \frac{{\left( {n + 2} \right)\left( {n + 1} \right)}}{{\left( {k + 2} \right)\left( {k + 1} \right)}}\binom{{n}}{k}.
\end{equation*}
\end{proof}
\begin{example}
The sequence
\begin{equation*}
\left(\frac {{L_{k+2}-1}}{(k+1)(k+2)}\right),\quad k=0,1,2,\ldots
\end{equation*}
is a self-inverse sequence, since $(L_k)$ is a self-inverse sequence and
\begin{equation*}
\sum_{k=0}^n L_k=L_{n+2}-1.
\end{equation*}
\end{example}

\begin{theorem}\label{thm.ph6iklv}
If $(t_k)$ and $(\tau_k)$, $k=0,1,2,\dots$, are a binomial-transform pair of the first kind, then so are the sequences $(a_k)$ and $(\alpha_k)$, $k=0,1,2,\dots$, where
\begin{equation}
a_k=\frac{1}{{k + 1}}\sum\limits_{j = 0}^k {\tau _j } \text{ and } \alpha_k=\frac{{t_k }}{{k + 1}}.
\end{equation}

\end{theorem}

\begin{proof}
Identity~\eqref{ps67scn} written for a binomial-transform pair of the first kind is
\begin{equation*}
\sum_{k = 0}^n {\tau _k }  = \sum_{k = 0}^n {( - 1)^k \binom{{n + 1}}{{k + 1}}t_k } ,
\end{equation*}
which can also be written as
\begin{equation*}
\frac{1}{{n + 1}}\sum_{k = 0}^n {\tau _k }  = \sum_{k = 0}^n {( - 1)^k \binom{{n}}{k}\frac{{t_k }}{{k + 1}}} ,
\end{equation*}
and hence the theorem.
\end{proof}

\begin{theorem}\label{thm.spugtzj}
If $(t_k)$ and $(\tau_k)$, $k=0,1,2,\dots$, are a binomial-transform pair of the first kind, then so are the sequences $(b_k)$ and $(\beta_k)$, $k=0,1,2,\dots$, where
\begin{equation}
b_k= - \frac{1}{{k + 1}}\sum\limits_{j = 1}^k {\tau _{j - 1} } \text{ and }\beta_k=\frac{1}{{k + 1}}\sum\limits_{j = 1}^k {t_{j - 1} } .
\end{equation}

\end{theorem}

In particular, if $(t_k)$, $k=0,1,2,\dots$, is a self-inverse sequence, then the sequence
\begin{equation*}
\left(\frac{1}{{k + 1}}\sum\limits_{j = 1}^k {t_{j - 1} }\right),\quad  k=0,1,2,\dots,
\end{equation*}
is an anti-self-inverse sequence, and vice versa.
\begin{proof}
Let $\{(a_k),(\alpha_k)\}$, $k=0,1,2,\dots$, be a binomial-transform pair of the first kind, where
\begin{equation*}
a_k  =  - \sum_{j = 1}^k {\tau _{j - 1} },\quad\alpha _k  = \left( {1 - \delta _{k0} } \right)t_{k - 1 + \delta _{k0} } ,
\end{equation*}
and $\{(t_k),(\tau_k)\}$, $k=0,1,2,\dots$, are a binomial-transform pair of the first kind.

Then
\begin{equation*}
\frac{1}{{k + 1}}\sum\limits_{j = 0}^k {\alpha _j }  = \frac{1}{{k + 1}}\sum\limits_{j = 0}^k {\left( {1 - \delta _{j0} } \right)t_{j - 1 + \delta _{j0} } }  = \frac{1}{{k + 1}}\sum\limits_{j = 1}^k {t_{j - 1} } 
\end{equation*}
and
\begin{equation*}
\frac{{a_k }}{{k + 1}} =  - \frac{1}{{k + 1}}\sum\limits_{j = 1}^k {\tau _{j - 1} }
\end{equation*}
are a binomial-transform pair of the first kind, by Theorem~\ref{thm.ph6iklv}.
\end{proof}

\begin{theorem}
If $(t_k)$ and $(\tau_k)$, $k=0,1,2,\dots$, are a binomial-transform pair of the first kind, then so are the sequences $(b_k)$ and $(\beta_k)$, $k=0,1,2,\dots$, where
\begin{equation}
b_k=\frac{1}{{k + 1}}\sum\limits_{j = 0}^k {\frac{{t_j }}{{j + 1}}} \text{ and }\beta_k=\frac{1}{{\left( {k + 1} \right)^2 }}\sum\limits_{j = 0}^k {\tau _j }.
\end{equation}

\end{theorem}

\begin{proof}
Let $\{(a_k),(\alpha_k)\}$, $k=0,1,2,\dots$, be a binomial-transform pair of the first kind, where
\begin{equation*}
a_k  = \frac{1}{{k + 1}}\sum\limits_{j = 0}^k {\tau _k },\quad \alpha _k  = \frac{{t_k }}{{k + 1}},
\end{equation*}
and $\{(t_k),(\tau_k)\}$, $k=0,1,2,\dots$, is a binomial-transform pair of the first kind.

Then the sequences $(b_k)$ and $(\beta_k)$, $k=0,1,2,\dots$, where
\begin{equation*}
b_k=\frac{1}{{k + 1}}\sum\limits_{j = 0}^k {\alpha _j }  = \frac{1}{{k + 1}}\sum\limits_{j = 0}^k {\frac{{t_j }}{{j + 1}}}
\end{equation*}
and
\begin{equation*}
\beta_k=\frac{{a_k }}{{k + 1}} = \frac{1}{{\left( {k + 1} \right)^2 }}\sum\limits_{j = 0}^k {\tau _j }
\end{equation*}
are a binomial-transform pair of the first kind, by Theorem~\ref{thm.ph6iklv}.
\end{proof}

\begin{theorem}\label{thm.pumwibe}
Let $m$ be a non-negative integer. If $(t_k)$ and $(\tau_k)$, $k=0,1,2,\dots$, are a binomial-transform pair of the first kind, then so are the sequences $(b_k)$ and $(\beta_k)$, $k=0,1,2,\dots$, where
\begin{equation}
b_k  = \frac{1}{{k + 1}}\sum_{j = 0}^k {t_{j + m} } \text{ and } \beta_k  = \frac{1}{{k + 1}}\sum_{p = 0}^m {( - 1)^p \binom{{m}}{p}\tau _{k + p} } .
\end{equation}

\end{theorem}
\begin{proof}
Let $\{(a_k),(\alpha_k)\}$, $k=0,1,2,\ldots$, be a binomial-transform pair of the first kind, where (see Equation~\eqref{w95ejl1})
\begin{equation*}
a_k  = \sum_{p = 0}^m {( - 1)^p \binom{{m}}{p}\tau _{k + p} } \text{ and }\alpha _k  = t_{k + m} .
\end{equation*}
By Theorem~\ref{thm.ph6iklv}, sequences $(b_k)$ and $(\beta_k)$, $k=0,1,2,\ldots$, given by
\begin{equation*}
b_k  = \frac{1}{{k + 1}}\sum_{j = 0}^k {\alpha _k }  = \frac{1}{{k + 1}}\sum_{j = 0}^k {t_{j + m} },\quad k=0,1,2,\ldots,
\end{equation*}
and
\begin{equation*}
\beta_k  = \frac{{a_k }}{{k + 1}} = \frac{1}{{k + 1}}\sum_{p = 0}^m {( - 1)^p \binom{{m}}{p}\tau _{k + p} } ,\quad k=0,1,2,\ldots,
\end{equation*}
are also a binomial-transform pair of the first kind.
\end{proof}

\begin{example}
By choosing $t_k=F_k$ and $\tau_k=-F_k$ in Theorem~\ref{thm.pumwibe}, we find a new binomial-transform pair of the first kind $\{(b_k),(\beta_k)\}$, $k=0,1,2,\ldots$, where
\begin{equation}
b_k=\frac{F_{m+k+2}-F_{m+1}}{k+1}\text{ and }\beta_k=\frac{(-1)^kF_{m-k}}{k+1},
\end{equation}
since~\cite[p.87]{koshy}:
\begin{equation*}
\sum_{j=0}^k{F_{j+m}}=F_{k+m+2}-F_{m+1}
\end{equation*}
and from~\eqref{q96hz7w}:
\begin{equation*}
\sum_{p=0}^m{(-1)^p\binom mp F_{k+p}}=(-1)^{k-1}F_{m-k}.
\end{equation*}

\end{example}

\begin{theorem}
Let $m$ be a non-negative integer. If $(t_k)$ and $(\tau_k)$, $k=0,1,2,\dots$, are a binomial-transform pair of the first kind, then so are the sequences $(b_k)$ and $(\beta_k)$, $k=0,1,2,\dots$, where
\begin{equation}
b_k  = \frac{1}{{k + m + 1}}\sum_{j = 0}^{k + m} {\tau _j } \text{ and }\beta _k  = \sum_{p = 0}^m {( - 1)^p \binom{{m}}{p}\frac{{t_{k + p} }}{{k + p + 1}}} .
\end{equation}
\end{theorem}
Thus,
\begin{equation}
\sum_{k = 0}^n {\frac{{( - 1)^k }}{{k + m + 1}}\binom{{n}}{k}\sum_{j = 0}^{k + m} {\tau _j } }  = \sum_{k = 0}^m {\frac{{( - 1)^k }}{{k + n + 1}}\binom{{m}}{k}t_{k + n} } .
\end{equation}
\begin{proof}
Let $\{(a_k),(\alpha_k)\}$, $k=0,1,2,\ldots$, be a binomial-transform pair of the first kind, where 
\begin{equation*}
a_k  = \frac{1}{{k + 1}}\sum_{j = 0}^k {\tau _j } \text{ and }\alpha _k  = \frac{{t_k }}{{k + 1}}.
\end{equation*}
By~\eqref{w95ejl1}, so also are $(b_k)$ and $(\beta_k)$, $k=0,1,2,\ldots$, where
\begin{equation*}
b_k=a_{k+m}= \frac{1}{{k + m + 1}}\sum_{j = 0}^{k + m} {\tau _j }
\end{equation*}
and
\begin{equation*}
\beta_k=\sum_{p=0}^m{(-1)^p\binom mp\alpha_{k+p}}=\sum_{p = 0}^m {( - 1)^p \binom{{m}}{p}\frac{{t_{k + p} }}{{k + p + 1}}}.
\end{equation*}
\end{proof}

\begin{theorem}
If $(t_k)$ and $(\tau_k)$, $k=0,1,2,\dots$, are a binomial-transform pair of the first kind, then so are the sequences $(b_k)$ and $(\beta_k)$, $k=0,1,2,\dots$, where
\begin{equation}
b_k  =  - \sum_{j = 1}^k {\frac{{t_{j - 1} }}{j}} \text{ and }\beta _k  = \frac{1}{{k + \delta _{k0} }}\sum_{j = 1}^k {\tau _{j-1} } .
\end{equation}

\end{theorem}

\begin{proof}
Let
\begin{equation*}
a_k  = \frac{1}{{k + 1}}\sum_{j = 0}^k {\tau _j } \text{ and }\alpha _k  = \frac{{t_k }}{{k + 1}},\quad k=0,1,2,\ldots,
\end{equation*}
be a binomial-transform pair of the first kind. By Theorem~\ref{thm.hq03eji}, so also are
\begin{equation*}
 - \sum_{j = 1}^k {\alpha _{j - 1} }= - \sum_{j = 1}^k {\frac{{t_{j - 1} }}{j}}
\end{equation*}
and
\begin{equation*}
\left( {1 - \delta _{k0} } \right)a_{k - 1 + \delta _{k0} } = \frac{1}{{k + \delta _{k0} }}\sum_{j = 1}^k {\tau _{j-1} } .
\end{equation*}
\end{proof}

\begin{theorem}
If $(s_k)$ and $(\sigma_k)$, $k=0,1,2,\dots$, are a binomial-transform pair of the first kind, then so are the sequences $(a_k)$ and $(\alpha_k)$, $k=0,1,2,\dots$, where
\begin{equation}
a_k= - \sum_{j = 1}^k {2^{ - j + 1} s_{j - 1} }\text{ and }\alpha_k= \frac{{2^{ - k + 1} }}{k+\delta_{k0}}\sum_{j = 1}^k {j\binom{{k}}{j}\sigma _{j - 1} },\quad k=0,1,2,\ldots .
\end{equation}
\end{theorem}

\begin{proof}
Consider the binomial-transform pair of the first kind version of~\eqref{pop9ybt}, namely,
\begin{equation}\label{r2y4bpk}
\sum_{k = 0}^n {\binom{{n}}{k}\sigma _k }  = \sum_{k = 0}^n {( - 1)^k \binom{{n}}{k}2^{n - k} s_k } .
\end{equation}
We see that $(b_k)$ and $(\beta_k)$, $k=0,1,2\ldots$, are a binomial transform pair of the first kind, where
\begin{equation}\label{dmpo94c}
b_k=2^{-k}\sum_{j=0}^k{\binom kj\sigma_j}\text{ and } \beta_k=2^{-k}s_k.
\end{equation}
By Theorem~\ref{thm.hq03eji}, $(a_k)$ and $(\alpha_k)$, $k=0,1,2\ldots$, are also a binomial-transform pair of the first kind, where
\begin{equation*}
a_k  =  - \sum_{j = 1}^k {\beta _{j - 1} }\text{ and } \alpha _k  = \left( {1 - \delta _{k0} } \right)b_{k - 1 + \delta _{k0} } .
\end{equation*}
\end{proof}

\begin{theorem}
If $(s_k)$ and $(\sigma_k)$, $k=0,1,2,\dots$, are a binomial-transform pair of the first kind, then so are the sequences $(a_k)$ and $(\alpha_k)$, $k=0,1,2,\dots$, where
\begin{equation}\label{bsp377t}
a_k  = \frac{1}{{k + 1}}\sum_{j = 0}^k {2^{ - j} s_j } \text{ and }\alpha _k  = \frac{{2^{ - k} }}{{k + 1}}\sum_{j = 0}^k {\binom{{k}}{j}\sigma _j } .
\end{equation}
\end{theorem}

\begin{proof}
Let $(b_k)$ and $(\beta_k)$, $k=0,1,2,\ldots$, be the binomial-transform pair of sequences defined in~\eqref{dmpo94c}. By Theorem~\ref{thm.ph6iklv}, $(a_k)$ and $(\alpha_k)$ are also a binomial-transform pair of the first kind, where
\begin{equation*}
a_k  = \frac{1}{{k + 1}}\sum_{j = 0}^k {\beta _j }  = \frac{1}{{k + 1}}\sum_{j = 0}^k {2^{ - j} s_j } 
\end{equation*}
and
\begin{equation*}
\alpha _k  = \frac{{a_k }}{{k + 1}} = \frac{{2^{ - k} }}{{k + 1}}\sum_{j = 0}^k {\binom{{k}}{j}\sigma _j } .
\end{equation*}
\end{proof}

\begin{example}
By choosing $s_k=1$ and $\sigma_k=\delta_{k0}$ in~\eqref{bsp377t}, we find that
\begin{equation*}
\left(\frac{2-2^{-k}}{k+1}\right)\text{ and }\left(\frac{2^{-k}}{k+1}\right),\quad k=0,1,2,\ldots,
\end{equation*}
are a binomial-transform pair of the first kind.
\end{example}

\begin{theorem}
If $(s_k)$ and $(\sigma_k)$, $k=0,1,2,\dots$, are a binomial-transform pair of the first kind, then so are the sequences $(a_k)$ and $(\alpha_k)$, $k=0,1,2,\dots$, where
\begin{equation}
a_k  = \frac{{2^{ - k} }}{{k + 1}}\sum_{j = 0}^k {\sigma _j } \text{ and }\alpha _k  = 2^{ - k} \sum_{j = 0}^k {\binom{{k}}{j}\frac{{s_j }}{{j + 1}}} .
\end{equation}
\end{theorem}

\begin{proof}
Using Theorem~\ref{thm.ph6iklv}, consider the binomial-transform pair of the first kind $(b_k)$ and $(\beta_k)$, $k=0,1,2,\ldots$, where
\begin{equation*}
b_k=\frac{1}{{k + 1}}\sum\limits_{j = 0}^k {\sigma _j } \text{ and } \beta_k=\frac{{s_k }}{{k + 1}}.
\end{equation*}
By~\eqref{dmpo94c},
\begin{equation*}
a_k  = 2^{ - k} b_k \text{ and }\alpha _k  = 2^{ - k} \sum_{j = 0}^k {\binom{{k}}{j}\beta _j } 
\end{equation*}
are also a binomial-transform pair of the first kind.
\end{proof}

\begin{theorem}
If $(s_k)$ and $(\sigma_k)$, $k=0,1,2,\dots$, are a binomial-transform pair of the first kind, then so are the sequences $(a_k)$ and $(\alpha_k)$, $k=0,1,2,\dots$, where
\begin{equation}\label{i4pg5bw}
a _k  = 2^{ - k} \sum_{j = 0}^k {\binom{{k}}{j}2^{ - j} s_j }\text{ and }\alpha_k  = 2^{ - 2k} \sum_{j = 0}^k {\binom{{k}}{j}\sigma _j }  .
\end{equation}

\end{theorem}

\begin{proof}
With~\eqref{dmpo94c} in mind, consider the binomial-transform pair of the first kind $(b_k)$ and $(\beta_k)$, $k=0,1,2,\ldots$, where
\begin{equation*}
b_k= 2^{ - k} \sum_{j = 0}^k {\binom{{k}}{j}\sigma _j }\text{ and } \beta_k  = 2^{ - k} s_k.  
\end{equation*}
By the same~\eqref{dmpo94c}, $a_k$ and $\alpha_k$ are also a binomial-transform pair of the first kind, where
\begin{equation*}
a_k  = 2^{ - k} \sum_{j = 0}^k {\binom{{k}}{j}\beta _j  = 2^{ - k} \sum_{j = 0}^k {\binom{{k}}{j}2^{ - j} s_j } },
\end{equation*}
and
\begin{equation*}
\alpha _k  = 2^{ - k} b_k  = 2^{ - 2k} \sum_{j = 0}^k {\binom{{k}}{j}\sigma _j } .
\end{equation*}
\end{proof}

\begin{example}
By choosing $s_k=1$ and $\sigma_k=\delta_{k0}$ in~\eqref{i4pg5bw}, we obtain
\begin{equation*}
\left(2^{-2k}3^k\right)\text{ and }\left(2^{-2k}\right),\quad k=0,1,2,\ldots,
\end{equation*}
as a binomial-transform pair of the first kind.
\end{example}

\begin{example}
By choosing $s_k=(1+k)^{-1}=\sigma_k$ in~\eqref{i4pg5bw}, we find that the sequences $(a_k)$ and $(\alpha_k)$, $k=0,1,2,\ldots$, where
\begin{equation*}
a_k  = \frac{{2^{1 - k}  - 2^{ - 2k} }}{{k + 1}}\text{ and }\alpha _k  = \frac{{2^{ - 2k} 3^{k + 1}  - 2^{1 - k} }}{{k + 1}},
\end{equation*}
are a binomial-transform pair of the first kind.
\end{example}

\begin{theorem}
If $(s_k)$ and $(\sigma_k)$, $k=0,1,2,\dots$, are a binomial-transform pair of the first kind, then so are the sequences $(a_k)$ and $(\alpha_k)$, $k=0,1,2,\dots$, where
\begin{equation}\label{dzf57nf}
a _k  = -2^{ - k} \sum_{j = 1}^k {\sigma_{j-1} }\text{ and }\alpha_k  = 2^{ - k} \sum_{j = 1}^k {\binom{{k}}{j}s_{j-1} }  .
\end{equation}

\end{theorem}

\begin{proof}
Let $(b_k)$ and $(\beta_k)$ be the binomial-transform pair of the first kind given by
\begin{equation*}
b_k=-\sum_{j=1}^k{\sigma_{j-1}}\text{ and } \beta_k=(1-\delta_{k0})s_{k-1+\delta_{k0}},
\end{equation*}
where $(s_k)$ and $(\sigma_k)$, $k=0,1,2,\ldots$, are a binomial transform-pair of the first kind. By~\eqref{dmpo94c}, it is also true that
\begin{equation*}
a_k=2^{-k}b_k\text{ and }\alpha_k=2^{-k}\sum_{j=0}^k{\binom kj}\beta_j,
\end{equation*}
are also a binomial-transform pair of the first kind.
\end{proof}

\begin{theorem}
Let $m$ be a non-negative integer. If $(s_k)$ and $(\sigma_k)$, $k=0,1,2,\dots$, are a binomial-transform pair of the first kind, then so are the sequences $(a_k)$ and $(\alpha_k)$, $k=0,1,2,\dots$, where
\begin{equation}\label{s3p73qx}
a_k  = 2^{ - k} \sum_{p = 0}^m {( - 1)^p \binom{{m}}{p}\sigma _{p + k} }\text{ and } \alpha _k  = 2^{ - k} \sum_{p = 0}^k {\binom{{k}}{p}s_{p + m} } .
\end{equation}

\end{theorem}

\begin{proof}
Recall (see~\eqref{w95ejl1}) that $(b_k)$ and $(\beta_k)$, $k=0,1,2,\ldots$, are a binomial-transform pair of the first kind, where
\begin{equation*}
b_k=\sum_{k=0}^m{(-1)^p\binom mp\sigma_{k+p}}\text{ and }\beta_k=s_{k+m}.
\end{equation*}
By~\eqref{dmpo94c}, it is also a fact that
\begin{equation*}
a_k=2^{-k}b_k\text{ and }\alpha_k=2^{-k}\sum_{j=0}^k{\binom kj\beta_j}
\end{equation*}
are a binomial-transform pair of the first kind.
\end{proof}

\begin{example}
Choosing $s_k=L_k=\sigma_k$ in~\eqref{s3p73qx} gives the binomial-transform pair of the first kind $(a_k)$ and $(\alpha_k)$, $k=0,1,2,\ldots$, where
\begin{equation*}
a_k=(-1)^k2^{-k}L_{m-k}\text{ and }\alpha_k=2^{-k}L_{m+2k},
\end{equation*}
since (Equation~\eqref{q96hz7w}):
\begin{equation*}
\sum_{p = 0}^m {( - 1)^p \binom{{m}}{p}L_{p + k} }  = ( - 1)^k L_{m - k} 
\end{equation*}
and~\cite[Equation (49)]{vajda}
\begin{equation*}
\sum_{p = 0}^k {\binom{{k}}{p}L_{p + m} }  = L_{m + 2k} .
\end{equation*}
\end{example}

\begin{theorem}
Let $m$ be a non-negative integer. If $(s_k)$ and $(\sigma_k)$, $k=0,1,2,\dots$, are a binomial-transform pair of the first kind, then so are the sequences $(a_k)$ and $(\alpha_k)$, $k=0,1,2,\dots$, where
\begin{equation}
a_k  = 2^{ - k} \sum_{p = 0}^m {( - 1)^p \binom{{m}}{p}2^{ - p} \sigma _{k + p} }\text{ and } \alpha _k  = 2^{ - k - m} \sum_{j = 0}^{k + m} {\binom{{k + m}}{j}s_j } .
\end{equation}
\end{theorem}
\begin{proof}
Let $(b_k)$ and $(\beta_k)$, $k=0,1,2,\ldots$, be a binomial-transform pair of the first kind (see \eqref{dmpo94c}), where
\begin{equation*}
b_k  = 2^{ - k} \sum_{j = 0}^k {\binom{{k}}{j}s_j }\text{ and }\beta _k  = 2^{ - k} \sigma _k,
\end{equation*} 
and $(s_k)$ and $(\sigma_k)$, $k=0,1,2,\ldots$, are a binomial-transform pair of the first kind. By the same~\eqref{dmpo94c}, $(a_k)$ and $(\alpha_k)$, $k=0,1,2,\ldots$, constitute a binomial-transform pair of the first kind, where
\begin{equation*}
a_k  = \sum_{p = 0}^m {( - 1)^p \binom{{m}}{p}\beta _{k + p} }  = 2^{ - k} \sum_{p = 0}^m {( - 1)^p \binom{{m}}{p}2^{ - p} \sigma _{k + p} } 
\end{equation*}
and
\begin{equation*}
\alpha _k  = b_{k + m}  = 2^{ - k - m} \sum_{j = 0}^{k + m} {\binom{{k + m}}{j}s_j } .
\end{equation*}
\end{proof}

\begin{theorem}
Let $m$ be a non-negative integer. If $(s_k)$ and $(\sigma_k)$, $k=0,1,2,\dots$, are a binomial-transform pair of the first kind, then so are the sequences $(a_k)$ and $(\alpha_k)$, $k=0,1,2,\dots$, where
\begin{equation}
a_k  =  - \sum_{j = 1}^k {s_{m + j - 1} } \text{ and }\alpha _k  = \left( {1 - \delta _{k0} } \right)\sum_{p = 0}^m {( - 1)^p \binom{{m}}{p}\sigma _{k - 1 + \delta _{k0}  + p} } .
\end{equation}
\end{theorem}

\begin{proof}
Consider the binomial transform pair of the first kind (Equation~\eqref{w95ejl1}), $\{(a_k), (\alpha_k)\}$, $k=0,1,2,\ldots$,  where
\begin{equation*}
b _k  = \sum_{p = 0}^m {( - 1)^p \binom{{m}}{p}\sigma _{k + p} }\text{ and } \beta_k  = s_{k + m} . 
\end{equation*}
By Theorem~\ref{thm.hq03eji}, $(a_k)$ and $(\alpha_k)$, $k=0,1,2,\ldots$, are also a binomial-transform pair of the first kind, where
\begin{equation*}
a_k=- \sum_{j = 1}^k {\beta _{j - 1} } \text{ and } \alpha_k=(1 - \delta _{k0} )b_{k - 1 + \delta _{k0} }.
\end{equation*}
\end{proof}

\begin{theorem}\label{thm.xl9sjqb}
Let $\{(t_k),(\tau_k)\}$, $k=0,1,2,\ldots$, be a binomial-transform pair of the first kind. Then $\left(kt_k\right)$ and $\left(k(\tau_k-\tau_{k-1})\right)$ are a binomial-transform pair of the first kind for $k=1,2,\ldots$.
\end{theorem}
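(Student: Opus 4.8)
The plan is to verify directly the defining relation~\eqref{first} of a binomial-transform pair of the first kind for the candidate pair $a_k = k\,t_k$ and $\alpha_k = k(\tau_k - \tau_{k-1})$. By~\eqref{first} it suffices to establish that $\alpha_n = \sum_{k=0}^n (-1)^k \binom{n}{k} a_k$ for every non-negative integer $n$; that is,
\begin{equation*}
\sum_{k=0}^n {( - 1)^k \binom{n}{k} k\,t_k } = n\left( {\tau_n - \tau_{n-1}} \right).
\end{equation*}

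First I would drop the vanishing $k=0$ term and apply the absorption identity $k\binom{n}{k} = n\binom{n-1}{k-1}$, which recasts the left-hand side as $n\sum_{k=1}^n (-1)^k \binom{n-1}{k-1} t_k$. Reindexing with $j = k-1$ then produces $-n\sum_{j=0}^{n-1} (-1)^j \binom{n-1}{j} t_{j+1}$, so the entire problem reduces to evaluating the inner sum $\sum_{j=0}^{n-1} (-1)^j \binom{n-1}{j} t_{j+1}$.

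The key step is to recognize this inner sum as an instance of the shift symmetry~\eqref{hl4i644}. Taking the pair $\{(t_k),(\tau_k)\}$ there, setting $m=1$, and replacing $n$ by $n-1$, the right-hand side of~\eqref{hl4i644} collapses to the two-term sum $\tau_{n-1} - \tau_n$. Substituting this back yields $-n(\tau_{n-1} - \tau_n) = n(\tau_n - \tau_{n-1}) = \alpha_n$, which completes the verification.

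I expect the only delicate point to be the bookkeeping at the boundary: the $k=0$ term of $a_k = k\,t_k$ is $0$, and the corresponding value $\alpha_0 = 0$ is forced regardless of how one interprets the undefined symbol $\tau_{-1}$, which is precisely why the statement is restricted to $k \ge 1$. Beyond this indexing care, the argument is a brief manipulation with no genuine obstacle, since the shift identity~\eqref{hl4i644} performs all the heavy lifting.
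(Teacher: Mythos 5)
Your proof is correct, and it takes a genuinely different route from the paper's. The paper proves the same key identity $\sum_{k=0}^n(-1)^k\binom{n}{k}k\,t_k=n(\tau_n-\tau_{n-1})$ by starting from its earlier identity~\eqref{s4jiizc} at $j=1$ (itself obtained from the convolution theorem~\eqref{main1} applied to the pair $s_k=\binom{k}{1}$, $\sigma_k=-\delta_{k1}$), which after reindexing reads $\sum_{k=0}^n(-1)^k\binom{n-1}{k}t_k=\tau_{n-1}$; it then splits $n\binom{n-1}{k}=n\binom{n}{k}-k\binom{n}{k}$ and recognizes $\sum(-1)^k\binom{n}{k}t_k=\tau_n$. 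You instead absorb $k\binom{n}{k}=n\binom{n-1}{k-1}$, reindex, and invoke the shift symmetry~\eqref{hl4i644} with $m=1$ to evaluate $\sum_{j=0}^{n-1}(-1)^j\binom{n-1}{j}t_{j+1}=\tau_{n-1}-\tau_n$. Both arguments reduce the weight $k\binom{n}{k}$ to order-$(n-1)$ binomial coefficients and then quote a previously established transform identity, but the lemmas invoked are different: yours leans on the $m$--$n$ symmetry of shifted transforms, which packages the two terms $\tau_{n-1}-\tau_n$ in one stroke, whereas the paper's decomposition produces the two terms separately and needs no shifted sequence, only the defining relation~\eqref{first} plus~\eqref{s4jiizc}. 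Your closing remark about the $k=0$ boundary and the undefined $\tau_{-1}$ is a sensible observation that the paper leaves implicit.
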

\begin{proof}
Set $j=1$ in~\eqref{s4jiizc} on page~\pageref{s4jiizc} and use the identity
\begin{equation*}
\binom nk=\frac nk\binom{n-1}{k-1}
\end{equation*}
to obtain
\begin{equation*}
n\sum_{k = 0}^n {( - 1)^k \binom{{n}}{k}t_k }  - \sum_{k = 0}^n {( - 1)^k k\binom{{n}}{k}t_k }  = n\tau _{n - 1} ;
\end{equation*}
and hence
\begin{equation}\label{gnvsdip}
\sum_{k = 0}^n {( - 1)^k \binom{{n}}{k}kt_k }  = n\left( {\tau _n  - \tau _{n - 1} } \right).
\end{equation}

\end{proof}

\begin{example}
If on the basis of the recurrence relation of Bernoulli numbers, namely,
\begin{equation*}
\sum_{k=0}^n{(-1)^k\binom nk (-1)^kB_k}=(-1)^nB_n,
\end{equation*}
and use $t_k=(-1)^kB_k=\tau_k$ in~\eqref{gnvsdip}, we obtain, for $n$ a non-negative integer,
\begin{equation}\label{hmg5qf4}
\sum_{k = 0}^n {\binom nkkB_k }  = ( - 1)^n n\left( {B_n  + B_{n - 1} } \right),
\end{equation}
that is
\begin{equation}
\sum_{k = 0}^n {\binom nkkB_k }  
= \begin{cases}
 nB_n,&\text{if $n$ is even} ; \\ 
  - nB_{n - 1},&\text{if $n>1$ is odd};  \\ 
-1/2,&\text{if $n=1$}. 
 \end{cases} 
\end{equation}

\end{example}

\begin{corollary}\label{cor.l9mldgr}
Let $\{(s_k),(\sigma_k)\}$, $k$ an integer, be a binomial-transform pair of the first kind. Let $S_0(k)=\sigma_k$ and $S_m(k)=k(S_{m-1}(k)-S_{m-1}(k-1))$ for every positive integer $m$. Then $S_m(k)$ and $k^ms_k$ are a binomial-transform pair of the first kind; that is
\begin{equation}
\sum_{k = 0}^n {( - 1)^k \binom{{n}}{k}k^m s_k }  = S_m (n),\quad m=0,1,2,\ldots.
\end{equation}

\end{corollary}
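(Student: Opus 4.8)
The plan is to argue by induction on $m$, using the immediately preceding theorem (equivalently, its key identity \eqref{gnvsdip}) as the engine that advances the induction one step at a time.

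For the base case $m=0$, the asserted identity $\sum_{k=0}^n (-1)^k\binom{n}{k}k^0 s_k = S_0(n)$ collapses to $\sum_{k=0}^n (-1)^k \binom{n}{k}s_k = \sigma_n$, which is exactly the defining relation~\eqref{first}. Equivalently, $\{(k^0 s_k),(S_0(k))\}=\{(s_k),(\sigma_k)\}$ is a binomial-transform pair of the first kind by hypothesis, so there is nothing to prove at this stage.

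For the inductive step, I assume that $\{(k^m s_k),(S_m(k))\}$ is a binomial-transform pair of the first kind, i.e. $\sum_{k=0}^n (-1)^k \binom{n}{k}k^m s_k = S_m(n)$. I then invoke the preceding theorem with the pair $(t_k,\tau_k)=(k^m s_k,\,S_m(k))$: since multiplication by $k$ carries a pair $\{(t_k),(\tau_k)\}$ to the pair $\{(kt_k),(k(\tau_k-\tau_{k-1}))\}$, I conclude that $\{(k\cdot k^m s_k),\,(k(S_m(k)-S_m(k-1)))\}$ is again a binomial-transform pair of the first kind. By the recursive definition $S_{m+1}(k)=k(S_m(k)-S_m(k-1))$, this is precisely the statement that $\{(k^{m+1}s_k),(S_{m+1}(k))\}$ is a pair, i.e. $\sum_{k=0}^n (-1)^k \binom{n}{k}k^{m+1}s_k = S_{m+1}(n)$, which closes the induction.

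The argument is essentially bookkeeping, and the only points needing care are the correct identification of roles in~\eqref{gnvsdip} — namely that $(k^m s_k)$ occupies the original ``$s$''-slot while its binomial transform $(S_m(k))$ occupies the ``$\sigma$''-slot — and the verification that the boundary at $k=0$ causes no difficulty, since the leading factor $k$ annihilates the otherwise undefined term $S_m(-1)$ and both sequences vanish there. Thus no genuine obstacle arises beyond matching the recursion $S_m(k)=k(S_{m-1}(k)-S_{m-1}(k-1))$ to the transformation $\tau_k\mapsto k(\tau_k-\tau_{k-1})$ furnished by the preceding theorem.
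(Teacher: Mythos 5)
Your proof is correct and follows exactly the route the paper intends: the corollary is presented as an immediate consequence of the preceding theorem, obtained by iterating the map $\{(t_k),(\tau_k)\}\mapsto\{(kt_k),(k(\tau_k-\tau_{k-1}))\}$, which is precisely your induction on $m$ with base case given by~\eqref{first}. Your added remark that the factor $k$ neutralizes the boundary term at $k=0$ is a sensible piece of care that the paper leaves implicit.
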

In particular,
\begin{equation}
\sum_{k = 0}^n {( - 1)^k \binom{{n}}{k}k^2 s_k }  = n^2 \left( {\sigma _n  - \sigma _{n - 1} } \right) - n(n - 1)\left( {\sigma _{n - 1}  - \sigma _{n - 2} } \right),
\end{equation}
and
\begin{align}
\sum_{k = 0}^n {( - 1)^k \binom{{n}}{k}k^3 s_k }  &= n^3 \left( {\sigma _n  - \sigma _{n - 1} } \right) - n\left( {n - 1} \right)\left( {2n - 1} \right)\left( {\sigma _{n - 1}  - \sigma _{n - 2} } \right)\nonumber\\
&\qquad + n\left( {n - 1} \right)\left( {n - 2} \right)\left( {\sigma _{n - 2}  - \sigma _{n - 3} } \right).
\end{align}

\begin{theorem}\label{thm.ov7pa39}
If $(t_k)$ and $(\tau_k)$, $k=0,1,2,\dots$, are a binomial-transform pair of the first kind, then so are the sequences $(a_k)$ and $(\alpha_k)$, $k=1,2,3,\dots$, where
\begin{equation*}
a_k=\sum_{j = 1}^k {t_j } \text{ and } \alpha_k=\tau_k-\tau_{k-1}.
\end{equation*}
We also have that $(b_k)$ and $(\beta_k)$, $k=1,2,3,\dots$, are a binomial-transform pair of the first kind, where
\begin{equation*}
b_k  = ( - 1)^k \sum_{j = 1}^k {(-1)^jt_j } \text{ and }\beta _k  = 2^k \sum_{j = 1}^k {2^{ - j} \left( {\tau _j  - \tau _{j - 1} } \right)} .
\end{equation*}

Thus,
\begin{equation}\label{z2iq1wi}
\sum_{k = 1}^n {( - 1)^k \binom{{n}}{k}\sum_{j = 1}^k {t_j } }  = \tau _n  - \tau _{n - 1} 
\end{equation}
and
\begin{equation}\label{d3cygnq}
\sum_{k = 1}^n {\binom{{n}}{k}\sum_{j = 1}^k {( - 1)^j t_j } }  = 2^n \sum_{k = 1}^n {2^{ - k} \left( {\tau _k  - \tau _{k - 1} } \right)} .
\end{equation}

\end{theorem}

\begin{proof}
The following identities are known ($s=0$ in~\cite[Cor. 2, 4]{batir21b}):
\begin{equation}\label{vpwct24}
\sum_{k = 1}^n {( - 1)^k \binom{{n}}{k}\sum_{k = 1}^n {t_k } }  =  - \sum_{k = 0}^{n - 1} {( - 1)^k \binom{{n - 1}}{k}t_{k + 1} } ,
\end{equation}
and
\begin{equation}\label{ik67ykl}
\sum_{k = 1}^n {\binom{{n}}{k}\sum_{k = 1}^n {t_k } }  = 2^n \sum_{k = 1}^n {\frac{1}{{2^k }}\sum_{j = 0}^{k - 1} {\binom{{k - 1}}{j}t_{j + 1} } } .
\end{equation}
Write~\eqref{vpwct24} as
\begin{equation*}
\sum_{k = 1}^n {( - 1)^k \binom{{n}}{k}\sum_{k = 1}^n {t_k } }  = \sum_{k = 1}^n {( - 1)^k \binom{{n - 1}}{{k - 1}}t_k }  = \sum_{k = 0}^n {( - 1)^k \frac{k}{n}\binom{{n}}{k}t_k } 
\end{equation*}
and use~\eqref{gnvsdip} to obtain~\eqref{z2iq1wi}. The proof of~\eqref{d3cygnq} is similar.
\end{proof}

\section{Self-inverse sequences}\label{sec.invariant}
In this section, we discuss how to construct invariant or self-inverse sequences and anti-self-inverse sequences from given binomial transform pairs. The results derived here complement those found by previous researchers~\cite{donaghey76,sun01,wang05}.
\begin{theorem}\label{thm.hdq3q7r}
Let $(s_k)$ and $(\sigma_k)$, $k=0,1,2,\ldots$, be a binomial transform pair of the first kind. Then, for $k=0,1,2,\ldots$, the sequence $(a_k)$ given by $a_k=s_k+\sigma_k$ is a self-inverse sequence while the sequence $(b_k)$ where $b_k=s_k-\sigma_k$ is an anti-self-inverse sequence.
\end{theorem}
\begin{proof}
We have
\begin{align*}
\sum_{k = 0}^n {( - 1)^k \binom{{n}}{k}\left( {s_k  + \sigma _k } \right)}  &= \sum_{k = 0}^n {( - 1)^k \binom{{n}}{k}s_k }  + \sum_{k = 0}^n {( - 1)^k \binom{{n}}{k}\sigma _k }\\ 
&=\sigma_n+s_n,
\end{align*}
as claimed. Similarly,
\begin{equation*}
\sum_{k = 0}^n {( - 1)^k \binom{{n}}{k}\left( {s_k  - \sigma _k } \right)}=\sigma_n-s_n=-(s_n-\sigma_n).
\end{equation*}
\end{proof}
In particular, for $k=0,1,2,\ldots$, the sequence $(a_k)$ given by $a_k=x^k+(1-x)^k$ is a self-inverse sequence while the sequence $(b_k)$ where $b_k=x^k-(1-x)^k$ is an anti-self-inverse sequence for every complex number $x$.

\begin{theorem}\label{thm.dcnj368}
If $(t_k)$, $k=0,1,2,\ldots,$ is an anti-self-inverse sequence, then the sequence $(a_k)$, where
\begin{equation}\label{wrspbj4}
a_k=\frac{t_{k+1}}{k+1}
\end{equation}
is a self inverse sequence.

\end{theorem}

\begin{proof}
Clearly, $t_0=0=\tau_0$ for an anti-self-inverse sequence. Since here $(t_k)$ is anti-self-inverse by hypothesis, we have $\tau_k=-t_k$ and hence~\eqref{wrspbj4} when we invoke Theorems~\ref{thm.u5nhiva} and~\ref{thm.hdq3q7r}.
\end{proof}

\begin{remark}
Theorem~\ref{thm.dcnj368} is known; see, for example, Wang~\cite{wang05}.
\end{remark}

\begin{theorem}
Let $(s_k)$ and $(\sigma_k)$, $k=0,1,2,\ldots$, be a binomial transform pair of the first kind. Then, for $k=0,1,2,\ldots$, the sequence $(a_k)$ given by 
\begin{equation}
a_k=\frac1{k+1}\left(s_{k+1}-\sigma_{k+1}\right)
\end{equation}
is a self-inverse sequence.
\end{theorem}
\begin{proof}
This is a consequence of Theorems~\ref{thm.hdq3q7r} and~\ref{thm.dcnj368}.
\end{proof}
\begin{theorem}
Let $(s_k)$ and $(\sigma_k)$, $k=0,1,2,\ldots$, be a binomial transform pair of the first kind. Then, for $k=0,1,2,\ldots$, the sequence $(a_k)$ given by 
\begin{equation}
a_k=H_{k-1+\delta_{k0}}
\end{equation}
is a self-inverse sequence while the sequence $(b_k)$, where
\begin{equation}
b_k  = \frac{ {kH_k  + 1 - \delta_{k0}} }{{k + \delta _{k0} }}
\end{equation}
is an anti-self-inverse sequence.
\end{theorem}
\begin{proof}
Since
\begin{equation*}
\sum_{k = 0}^n {( - 1)^k \binom{{n}}{k}H_k } 
=\begin{cases}
  - 1/n,&\text{$n\ne 0$}; \\ 
 0,&\text{$n=0$}; 
 \end{cases} 
\end{equation*}
we have that, for $k=0,1,2,\ldots$, sequences $(s_k)$ and $(\sigma_k)$ are a binomial-transform pair of the first kind, where
\begin{equation}\label{idh7u98}
s_k=H_k\text{ and }\sigma_k=-\frac{1-\delta_{k0}}{k+\delta_{k0}}.
\end{equation}
By Theorem~\ref{thm.hdq3q7r}, for $k=0,1,2,\ldots$, we have that $(a_k)$ is a self-inverse sequence while $(b_k)$ is an anti-self-inverse sequence, where
\begin{align*}
a_k  = s_k  + \sigma _k  = H_k  - \frac{{1 - \delta _{k0} }}{{k + \delta _{k0} }}
& =  \begin{cases}
 H_k  - 1/k,&\text{$k\ne 0$} ;\\ 
 0,&\text{$k=0$} \\ 
 \end{cases} \\
& =  \begin{cases}
 H_{k-1},&\text{$k\ne 0$}; \\ 
 0,&\text{$k=0$}; \\ 
 \end{cases} \\
& = H_{k - 1 + \delta _{k0} } ;
\end{align*}
and
\begin{align*}
a_k  = s_k  - \sigma _k  = H_k  + \frac{{1 - \delta _{k0} }}{{k + \delta _{k0} }}
& =  \begin{cases}
 H_k  + 1/k,&\text{$k\ne 0$} ;\\ 
 0,&\text{$k=0$;} \\ 
 \end{cases} \\
& =  \begin{cases}
 \frac{kH_k+1}k,&\text{$k\ne 0$}; \\ 
 0,&\text{$k=0$}; \\ 
 \end{cases} \\
& =  \frac{{1 - \delta _{k0} }}{{k + \delta _{k0} }}\left( {kH_k  + 1} \right) .
\end{align*}
\end{proof}
\begin{remark}
The self-inverse sequence~$(a_k)$ and the anti-self-inverse sequence~$(b_k)$ are given more briefly by
\begin{equation*}
a_k=H_{k-1}\text{ and }b_k  = \frac{\left( {kH_k  + 1} \right)}k,\quad k=1,2,3,\ldots
\end{equation*}
\end{remark}

\begin{theorem}
For $k=1,2,3,\ldots$, the sequence $(s_k)$ is an anti-self-inverse sequence, where
\begin{equation}
s_k=(-1)^{k-1}kB_{k-1};
\end{equation}
so that
\begin{equation}
\sum_{k = 1}^n {\binom{{n}}{k}kB_{k - 1} }  = ( - 1)^{n - 1} nB_{n - 1} .
\end{equation}
\end{theorem}
\begin{proof}
Follows from~\eqref{hmg5qf4} and Theorem~\ref{thm.hdq3q7r}.
\end{proof}

Using Theorem~\ref{thm.hdq3q7r}, self-inverse sequences and anti-self-inverse sequences can be deduced from each of the binomial-transform pairs established in Section~\ref{sec.relations}. We give some examples.
\begin{theorem}
Let $(s_k)$ and $(\sigma_k)$, $k=0,1,2,\ldots$, be a binomial transform pair of the first kind. Then, for $k=0,1,2,\ldots$, the sequence $(a_k)$ given by 
\begin{equation}
a_k  = \sum\limits_{j = 1}^k {\sigma _{j - 1} }  - \left( {1 - \delta _{k0} } \right)s_{k - 1 + \delta _{k0} }
\end{equation}
is a self-inverse sequence while the sequence $(b_k)$, where
\begin{equation}
b_k  = \sum\limits_{j = 1}^k {\sigma _{j - 1} }  + \left( {1 - \delta _{k0} } \right)s_{k - 1 + \delta _{k0} }
\end{equation}
is an anti-self-inverse sequence.
\end{theorem}
\begin{proof}
Follows from Theorem~\ref{thm.hq03eji} and Theorem~\ref{thm.hdq3q7r}.
\end{proof}

\begin{corollary}\label{cor.c31jd2c}
If $(s_k)$, $k=0,1,2,\ldots$, is a self-inverse sequence, then the sequence $(a_k)$, where
\begin{equation}
a_k=\sum_{j=1}^{k-1}s_{j-1},\quad k=1,2,3,\ldots,
\end{equation}
is a self-inverse sequence while the sequence $(b_k)$, where
\begin{equation}
b_k=\sum_{j=1}^{k-1}s_{j-1}+2s_{k-1},\quad k=1,2,3,\ldots,
\end{equation}
is an anti-self-inverse sequence. Similarly, if $(s_k)$, $k=0,1,2,\ldots$, is an anti-self-inverse sequence, then the sequence $(b_k)$, where
\begin{equation}
b_k=\sum_{j=1}^{k-1}s_{j-1},\quad k=1,2,3,\ldots,
\end{equation}
is an anti-self-inverse sequence while the sequence $(a_k)$, where
\begin{equation}
a_k=\sum_{j=1}^{k-1}s_{j-1}+2s_{k-1},\quad k=1,2,3,\ldots,
\end{equation}
is a self-inverse sequence.
\end{corollary}

Corollary~\ref{cor.c31jd2c} is useful because it allows one to derive a new self-inverse sequence and a new anti-self-inverse sequence from a known self-inverse sequence or anti-self-inverse sequence. See also Corollary~\ref{cor.ego8coh}.

\begin{example}
If we choose $s_k=H_{k-1+\delta_{k0}}$, $k=0,1,2,\ldots$ in Corollary~\ref{cor.c31jd2c} and use also~\eqref{qqzbh28}, we find that $(a_k)$ is a self-inverse sequence, where
\begin{equation}
a_k=(k-2)(H_{k-2}-1),\quad k=2,3,4,\ldots,
\end{equation}
and $(b_k)$ is an anti-self-inverse sequence, where
\begin{equation}
b_k=k(H_{k-2}-1)+2,\quad k=2,3,4,\ldots
\end{equation}
\end{example}

\begin{theorem}\label{thm.uix4p1u}
Let $(s_k)$ and $(\sigma_k)$, $k=0,1,2,\ldots$, be a binomial-transform pair of the first kind. Then, for $k=1,2,3,\ldots$, the sequence $(a_k)$ given by 
\begin{equation}
a_k  =\sum_{j=1}^{k-1}{\left(s_{j-1}+\sigma_{j-1}\right)},
\end{equation}
is a self-inverse sequence while the sequence $(b_k)$, where
\begin{equation}
b_k  =\sum_{j=1}^{k-1}{\left(s_{j-1}-\sigma_{j-1}\right)},
\end{equation}
is an anti-self-inverse sequence.
\end{theorem}
\begin{proof}
A consequence of Theorem~\ref{thm.qpev64c}, using Theorem~\ref{thm.hdq3q7r}.
\end{proof}
\begin{example}
The choice $s_k=x^k$ and $\sigma_k=(1-x)^k$ in Theorem~\ref{thm.uix4p1u} gives the self-inverse sequence $(a_k)$, where
\begin{equation}\label{q6q4k98}
a_k=\frac{1-x^{k-1}}{1-x}+\frac{1-(1-x)^{k-1}}x,\quad k=1,2,3,\ldots,
\end{equation}
and the anti-self-inverse sequence $(b_k)$, where
\begin{equation}\label{r9po5xl}
b_k=\frac{1-x^{k-1}}{1-x}-\frac{1-(1-x)^{k-1}}x,\quad k=1,2,3,\ldots,
\end{equation}
for every complex number $x$.

\end{example}
The next result comes from Theorem~\ref{thm.hn3cm79}.
\begin{theorem}\label{thm.n6j6oyj}
Let $(s_k)$ and $(\sigma_k)$, $k=0,1,2,\ldots$, be a binomial transform pair of the first kind. Then, for $k=0,1,2,3,\ldots$, we have a self-inverse sequence $(a_k)$ given by
\begin{equation}
a_k  = \frac{1}{{\left( {k + 1} \right)\left( {k + 2} \right)}}\sum_{j = 0}^k {\left( {s_j  + \sigma _j } \right)} ,
\end{equation}
and an anti-self-inverse sequence $(b_k)$ given by
\begin{equation}
b_k  = \frac{1}{{\left( {k + 1} \right)\left( {k + 2} \right)}}\sum_{j = 0}^k {\left( {s_j  - \sigma _j } \right)} .
\end{equation}

\end{theorem}

\begin{example}
Using~\eqref{idh7u98} in Theorem~\ref{thm.n6j6oyj}, we establish, for $k=0,1,2,\ldots$, the self-inverse sequence $(a_k)$, where
\begin{equation}
a_k=\frac{k(H_k-1)}{(k+1)(k+2)},
\end{equation}
and the anti-self-inverse sequence $(b_k)$, where
\begin{equation}
b_k=\frac{(k+2)H_k-k}{(k+1)(k+2)}.
\end{equation}
\end{example}

The next result is from Theorem~\ref{thm.spugtzj}.
\begin{theorem}\label{thm.czsuc0u}
Let $(s_k)$ and $(\sigma_k)$, $k=0,1,2,\ldots$, be a binomial transform pair of the first kind. Then, for $k=0,1,2,3,\ldots$, we have a self-inverse sequence $(a_k)$ given by
\begin{equation}
a_k  = \frac1{k+1}\sum_{j = 1}^k {\left( {s_{j-1}  - \sigma _{j-1} } \right)} ,
\end{equation}
and an anti-self-inverse sequence $(b_k)$ given by
\begin{equation}
b_k  = \frac1{k+1}\sum_{j = 1}^k {\left( {s_{j-1}  + \sigma _{j-1} } \right)} .
\end{equation}

\end{theorem}

\begin{corollary}\label{cor.ego8coh}
Let $(s_k)$, $k=0,1,2,\ldots$, be a sequence of complex numbers. Let $(w_k)$ be the sequence given by
\begin{equation}\label{puv0y71}
w_k  = \frac1{k+1}\sum_{j = 0}^{k-1} s_j,\quad k=0,1,2,\ldots 
\end{equation}
If $(s_k)$ is a self-inverse sequence, then $(w_k)$ is an anti-self-inverse sequence while if $(s_k)$ is an anti-self-inverse sequence, then $(w_k)$ is a self-inverse sequence.
\end{corollary}
The implication of Corollary~\ref{cor.ego8coh} is that it is always possible to construct an anti-self-inverse sequence from a given self-inverse sequence and vice versa.
\begin{example}
By choosing $s_k=1/(k+1)$ in~\eqref{puv0y71}, we deduce that the sequence $(w_k)$, where
\begin{equation}
w_k=\frac {H_k}{k+1},\quad k=0,1,2,\ldots,
\end{equation}
is an anti-self-inverse sequence. Similarly, $s_k=H_{k-1+\delta_{k0}}$ yields $(w_k)$, where
\begin{equation}
w_k=\frac{k-1}{k+1}(H_{k-1}-1),\quad k=1,2,3,\ldots,
\end{equation}
as an anti-self-inverse sequence. The choice $s_k=L_k$ in~\eqref{puv0y71} establishes the sequence $(w_k)$ given by
\begin{equation}
w_k=\frac{L_{k+1}-1}{k+1},\quad k=0,1,2,\ldots,
\end{equation}
as an anti-self-inverse sequence. Taking $s_k=F_k$ in~\eqref{puv0y71} produces the sequence $(w_k)$ given by
\begin{equation}\label{yjevokt}
w_k=\frac{F_{k+1}-1}{k+1},\quad k=0,1,2,\ldots,
\end{equation}
as a self-inverse sequence.
\end{example}
\begin{remark}
Since the sequence $(a_k)$, $k=0,1,2,\ldots$, where $a_k=1/(k+1)$ is a self-inverse sequence, $w_k$ given in~\eqref{yjevokt} can simply be replaced with
\begin{equation}
w_k=\frac{F_{k+1}}{k+1},\quad k=0,1,2,\ldots.
\end{equation}
\end{remark}
The next result is an immediate consequence of identity~\eqref{dmpo94c}.
\begin{theorem}
If $(s_k)$, $k=0,1,2,\ldots$, is a self-inverse sequence, then the sequence $(w_k)$, where
\begin{equation}
w_k=2^{-k}\sum_{j=0}^{k-1}\binom kjs_j,
\end{equation}
is an anti-self-inverse sequence, and vice versa.
\end{theorem}

\begin{theorem}\label{thm.j9v7pea}
Let $j$ be an arbitrary non-negative integer. For $k=0,1,2,\ldots$, we have a self-inverse sequence $(a_k)$, where
\begin{equation}\label{acqx937}
a_k=2^{-k}\binom k{j}\left(1+(-1)^j\right),
\end{equation}
and an anti-self-inverse sequence $(b_k)$ given by
\begin{equation}\label{ff4eehj}
b_k==2^{-k}\binom k{j}\left(1-(-1)^j\right).
\end{equation}
\end{theorem}

\begin{proof}
On account of~\eqref{siusv41} on page~\pageref{siusv41}, use
\begin{equation*}
s_k=\binom kj \text{ and } \sigma_k=(-1)^j\delta_{kj},
\end{equation*}
in~\eqref{r2y4bpk} to obtain
\begin{equation}\label{xi5wm2k}
\sum_{k = 0}^n {( - 1)^k \binom{{n}}{k}2^{ - k} \binom{{k}}{j}}  = ( - 1)^j 2^{ - n} \binom{{n}}{j},
\end{equation}
from which~\eqref{acqx937} and~\eqref{ff4eehj} follow, in view of Theorem~\ref{thm.hdq3q7r}.
\end{proof}

\begin{remark}
Since $j$ is an arbitrary integer, the self-inverse sequence $(a_k)$ and the anti-self-inverse sequence $(b_k)$ in Theorem~\ref{thm.j9v7pea} are given more briefly by
\begin{equation}\label{v2huu8h}
a_k=2^{-k}\binom k{2j},\quad b_k=2^{-k}\binom k{2j-1}.
\end{equation}
\end{remark}

Theorem~\ref{thm.xl9sjqb} yields the result stated in the next theorem.
\begin{theorem}
Let $(s_k)$ and $(\sigma_k)$, $k=0,1,2,\ldots$, be a binomial transform pair of the first kind. Then, for $k=1,2,3,\ldots$, we have a self-inverse sequence $(a_k)$ given by
\begin{equation}\label{a48y0a0}
a_k  = k\left(s_k+\sigma_k-\sigma_{k-1}\right) ,
\end{equation}
and an anti-self-inverse sequence $(b_k)$ given by
\begin{equation}\label{r2cb2lo}
b_k  = k\left(s_k-\sigma_k+\sigma_{k-1}\right) .
\end{equation}

\end{theorem}

\begin{corollary}\label{cor.noh1qvl}
If $(s_k)$, $k=1,2,3,\ldots,$ is a self-inverse sequence, then the sequence $(b_k)$ given by
\begin{equation}\label{h4mq163}
b_k=ks_{k-1}
\end{equation}
is an anti-self-inverse sequence.  Similarly, if $(s_k)$, $k=1,2,3,\ldots$, is an anti-self-inverse sequence, then the sequence $(a_k)$ given by
\begin{equation}\label{ydcjlo0}
a_k=ks_{k-1}.
\end{equation}
is a self-inverse sequence.

\end{corollary}
\begin{example}
Choosing $s_k=L_k=\sigma_k$, for $k=0,1,2,\ldots$, in~\eqref{a48y0a0} and~\eqref{r2cb2lo} yields the self-inverse sequence $(a_k)$, where
\begin{equation}
a_k=kF_{k-1},
\end{equation}
and an anti-self-inverse sequence, $(b_k)$, given by
\begin{equation}
b_k=kL_{k-1}.
\end{equation}
The choice $s_k=1/(k+1)=\sigma_k$ in~\eqref{a48y0a0} produces the self-inverse sequence $(a_k)$, where
\begin{equation}
a_k=\frac{k-1}{k+1},\quad k=1,2,3,\ldots
\end{equation}
With $s_k=H_{k-1}$, $k=1,2,3,\ldots$, in~\eqref{a48y0a0} we obtain the anti-self-inverse sequence $(b_k)$, where
\begin{equation}
b_k=kH_{k-2},\quad k=2,3,4,\ldots
\end{equation}
\end{example}

\begin{theorem}
Let $m$, $n$ and $r$ be non-negative integers. Let $(w_k)$, $k=m,m+1,m+2,\ldots$, be a sequence of complex numbers. If $(w_k)$ is a self-inverse sequence, then
\begin{equation}
\sum_{k = r+m}^n {( - 1)^k \binom{{n}}{k}\binom krw_{k - r} }  = ( - 1)^r \binom nrw_{n - r}, 
\end{equation}
while if $(w_k)$ is an anti-self-inverse sequence, then
\begin{equation}
\sum_{k = r+m}^n {( - 1)^k \binom{{n}}{k}\binom krw_{k - r} }  = ( - 1)^{r-1} \binom nrw_{n - r}. 
\end{equation}
Thus, if $(w_k)$ is a self-inverse sequence, then the sequences $(t_k)$ and $(\tau_k)$, $k=r+m,r+m+1,r+m+2,\ldots$, where
\begin{equation}\label{johjnpc}
t_k=\binom krw_{k - r}\text{ and }\tau_k=(-1)^r\binom krw_{k - r},
\end{equation}
are a binomial-transform pair of the first kind. Similarly, if $(w_k)$ is an anti-self-inverse sequence, then the sequences $(t_k)$ and $(\tau_k)$, where
\begin{equation}\label{x9agnym}
t_k=\binom krw_{k - r}\text{ and }\tau_k=(-1)^{r-1}\binom krw_{k - r},
\end{equation}
are a binomial-transform pair of the first kind.

\end{theorem}
\begin{proof}
Repeated application of~\eqref{h4mq163} and~\eqref{ydcjlo0} to the sequence $(w_k)$.
\end{proof}

\begin{example}
From the Fibonacci sequence $(F_k)$, $k=0,1,2,\ldots$, and~\eqref{x9agnym}, we find that the sequences $(t_k)$ and $(\tau_k)$, $k=r,r+1,r+2,\ldots$, where
\begin{equation}
t_k=\binom krF_{k-r}\text{ and }(-1)^{r-1}\binom krF_{k-r},
\end{equation}
are a binomial-transform pair of the first kind.

The harmonic number self-inverse sequence $H_{k-1}$, $k=1,2,3,\ldots$, in conjunction with~\eqref{johjnpc} produces the sequences $(t_k)$ and $(\tau_k)$, $k=r+1,r+2,r+3,\ldots$, where
\begin{equation}
t_k=\binom krH_{k-r-1}\text{ and }\tau_k=(-1)^r\binom krH_{k-r-1},
\end{equation}
as a binomial-transform pair of the first kind.

From the self-inverse sequence $(a_k)$ given in~\eqref{v2huu8h} and the result stated in~\eqref{johjnpc}, we find that the sequences $(t_k)$ and $(\tau_k)$, $k=r,r+1,r+2,\ldots$, given by
\begin{equation}
t_k=\binom{{k}}{r}2^{ - k} \binom{{k - r}}{{2j}}\text{ and }\tau_k=( - 1)^r \binom{{k}}{r}2^{ - k} \binom{{k - r}}{{2j}},
\end{equation}
constitute a binomial-transform pair of the first kind.
\end{example}

\section{Various extensions}
The results in this section extend some of the binomial convolution identities and other identities derived in Sections~\ref{sec.first}--\ref{sec.mixed}.

The result stated in the next theorem extends~\eqref{pop9ybt}.
\begin{theorem}
Let $j$ and $n$ be non-negative integers such that $j\le n$. Let $\{(s_k),(\sigma_k)\}$, $k=0,1,2,\ldots$, be a binomial-transform pair of the first kind. Then
\begin{equation}\label{b0c9iwa}
\sum_{k = 0}^n {( - 1)^k \binom{n - j}k2^{n - k} s_k }  = 2^j \sum_{k = 0}^n {\binom{n - j}k\sigma _k } .
\end{equation}
\end{theorem}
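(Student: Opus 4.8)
The plan is to use the linear-operator machinery of the first lemma. For the pair $\{(s_k),(\sigma_k)\}$ I would introduce the operator $\mathcal L_x$ with $\mathcal L_x(x^k)=s_k$; by that lemma $\mathcal L_x\bigl((1-x)^k\bigr)=\sigma_k$ for every non-negative integer $k$. The whole strategy is to recognize both sides of~\eqref{b0c9iwa} as $2^j$ times $\mathcal L_x$ applied to the \emph{same} polynomial in $x$, namely $(2-x)^{n-j}$, so that equality becomes immediate.

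First I would rewrite the left-hand side. Applying $\mathcal L_x$ term by term (legitimate by linearity) gives
\begin{equation*}
\sum_{k=0}^n(-1)^k\binom{n-j}{k}2^{n-k}s_k=\mathcal L_x\!\left(\sum_{k=0}^n(-1)^k\binom{n-j}{k}2^{n-k}x^k\right).
\end{equation*}
Because $\binom{n-j}{k}=0$ for $k>n-j$, the inner sum truncates at $k=n-j$, and the binomial theorem yields
\begin{equation*}
\sum_{k=0}^{n-j}(-1)^k\binom{n-j}{k}2^{n-k}x^k=2^n\sum_{k=0}^{n-j}\binom{n-j}{k}\left(-\frac x2\right)^k=2^n\Bigl(1-\tfrac x2\Bigr)^{n-j}=2^j(2-x)^{n-j}.
\end{equation*}
Hence the left-hand side equals $2^j\,\mathcal L_x\bigl((2-x)^{n-j}\bigr)$.

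Next I would treat the right-hand side the same way. Using $\sigma_k=\mathcal L_x\bigl((1-x)^k\bigr)$, linearity, and the same truncation at $k=n-j$,
\begin{equation*}
2^j\sum_{k=0}^n\binom{n-j}{k}\sigma_k=2^j\,\mathcal L_x\!\left(\sum_{k=0}^{n-j}\binom{n-j}{k}(1-x)^k\right)=2^j\,\mathcal L_x\bigl((1+(1-x))^{n-j}\bigr)=2^j\,\mathcal L_x\bigl((2-x)^{n-j}\bigr).
\end{equation*}
Comparing the two displays establishes~\eqref{b0c9iwa}. The hypothesis $j\le n$ is exactly what guarantees $n-j\ge 0$, so that these binomial expansions are meaningful. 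There is no genuine obstacle here; the only point requiring care is to carry out the truncation of each sum at $k=n-j$ before collapsing it with the binomial theorem, and to track the factors of two so that both sides reduce to the common quantity $2^j\,\mathcal L_x\bigl((2-x)^{n-j}\bigr)$.
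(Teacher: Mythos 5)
Your proof is correct, but it takes a different route from the paper's. The paper obtains \eqref{b0c9iwa} as a specialization of the convolution identity \eqref{main1}: it first rewrites \eqref{s4jiizc} as $\sum_{k=0}^n(-1)^k\binom nk\binom kj2^k=(-1)^n\binom nj2^j$, reads off the auxiliary binomial-transform pair $t_k=\binom kj2^k$, $\tau_k=(-1)^k\binom kj2^j$, and substitutes it into \eqref{main1}; the stated form then emerges after the absorption identity $\binom nk\binom{n-k}{j}=\binom nj\binom{n-j}{k}$ and cancellation of the common factor $\binom nj$ (which is where $j\le n$ is needed there). You instead bypass \eqref{main1} entirely and apply the operator $\mathcal L_x$ directly, showing that both sides of \eqref{b0c9iwa} are $2^j\,\mathcal L_x\bigl((2-x)^{n-j}\bigr)$ --- the left side via the expansion of $(2-x)^{n-j}$ in powers of $x$, the right side via its expansion in powers of $1-x$. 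Your argument is more self-contained and makes the role of the hypothesis $j\le n$ completely transparent (it guarantees the exponent $n-j$ is a non-negative integer and that the sums truncate correctly), whereas the paper's derivation illustrates its general programme of manufacturing new identities by feeding explicit pairs into Theorem~\ref{thm.main1}. Both are sound; yours is arguably the cleaner proof of this particular statement.
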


\begin{proof}
By writing~\eqref{s4jiizc} in the equivalent form
\begin{equation*}
\sum_{k = 0}^n {( - 1)^k \binom{{n}}{k}\binom{{k}}{j}2^k }  = ( - 1)^n \binom{{n}}{j}2^j ,
\end{equation*}
we see that $\binom kj2^k$ and $(-1)^k\binom kj2^j$ are a binomial pair of the first kind. Using
\begin{equation}
t_k=\binom kj2^k \text{ and } \tau_k=(-1)^k\binom kj2^j,
\end{equation}
in~\eqref{main1} gives~\eqref{b0c9iwa}.
\end{proof}

\begin{theorem}
Let $j$ and $n$ be non-negative integers. Let $\{(s_k), (\sigma_k)\}$, $k=0,1,2,\ldots$, be a binomial-transform pair of the first kind. Then
\begin{equation}\label{ajkcgco}
\sum_{k = 0}^n {( - 1)^{n - k} \binom{{n}}{k}\binom{{n - k}}{j}2^k s_k }  = \sum_{k = 0}^n {( - 1)^{j - k} \binom{{n}}{k}\binom{{n - k}}{j}2^k \sigma _k } .
\end{equation}

\end{theorem}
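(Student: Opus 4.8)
The plan is to exhibit an explicit binomial-transform pair of the first kind and feed it into the master identity~\eqref{main1}, exactly as in the proof of~\eqref{b0c9iwa}. Since the factor $2^k$ sits on $s_k$ in~\eqref{ajkcgco}, writing $2^k=2^n2^{-(n-k)}$ shows that the sequence I should pair with $\{(s_k),(\sigma_k)\}$ must carry a $2^{-k}$ weight. Concretely, I claim that
\begin{equation*}
t_k=\binom kj2^{-k}\quad\text{and}\quad\tau_k=(-1)^j\binom kj2^{-k}
\end{equation*}
form a binomial-transform pair of the first kind.

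To verify the claim I would compute the transform $\sum_{i=0}^k(-1)^i\binom ki t_i=\sum_{i=0}^k(-1)^i\binom ki\binom ij2^{-i}$. Applying the subset-of-a-subset identity $\binom ki\binom ij=\binom kj\binom{k-j}{i-j}$ and reindexing by $\ell=i-j$ turns this into
\begin{equation*}
(-1)^j2^{-j}\binom kj\sum_{\ell=0}^{k-j}\binom{k-j}{\ell}\left(-\tfrac12\right)^{\ell},
\end{equation*}
and the binomial theorem collapses the inner sum to $(1-\tfrac12)^{k-j}=2^{-(k-j)}$. The powers of two then combine to give $(-1)^j\binom kj2^{-k}=\tau_k$, which is precisely the required self-relation, so the pair is legitimate.

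With the pair in hand, substitution into~\eqref{main1} is routine bookkeeping. On the left $t_{n-k}=\binom{n-k}{j}2^{-(n-k)}=2^{-n}2^k\binom{n-k}{j}$, and on the right $\tau_{n-k}=(-1)^j2^{-n}2^k\binom{n-k}{j}$; the common factor $2^{-n}$ cancels throughout, while $(-1)^k(-1)^j=(-1)^{j-k}$ on the right reproduces exactly the sign in~\eqref{ajkcgco}. I expect the only genuine work to be the evaluation of the alternating sum defining $\tau_k$; everything after that is matching powers of two and signs. An equally short alternative is the operator route: replace $s_k$ by $x^k$ and $\sigma_k$ by $(1-x)^k$ in~\eqref{ajkcgco}, check that both sides reduce via the same subset identity to $\binom nj(-1)^n(1-2x)^{n-j}$, and then apply the operator $\mathcal L_x$.
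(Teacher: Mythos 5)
Your proof is correct and follows essentially the same route as the paper: both identify the auxiliary pair $t_k=2^{-k}\binom kj$, $\tau_k=(-1)^j2^{-k}\binom kj$ and substitute it into~\eqref{main1}, with identical sign and power-of-two bookkeeping. The only difference is that you verify this pair directly via the subset-of-a-subset identity and the binomial theorem, whereas the paper obtains it by specializing its earlier identity~\eqref{r2y4bpk} with the pair $\binom kj$, $(-1)^j\delta_{kj}$; your verification is self-contained and equally valid.
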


\begin{proof}
From~\eqref{xi5wm2k}, identify the binomial-transform pair of the first kind:
\begin{equation*}
t_k  = 2^{ - k} \binom{{k}}{j}\text{ and }\tau _k  = ( - 1)^j 2^{ - k} \binom{{k}}{j},
\end{equation*}
and plug these into~\eqref{main1} to obtain~\eqref{ajkcgco}.
\end{proof}

\begin{corollary}
Let $j$ and $n$ be non-negative integers having different parity. If $(s_k)$ is a self-inverse sequence, that is $\sigma_k=s_k$, $k=0,1,2,\ldots$, then
\begin{equation}
\sum_{k = 0}^n {( - 1)^k \binom{{n}}{k}\binom{{n - k}}{j}2^k s_k }  = 0.
\end{equation}
In particular, if $n$ is an odd integer and $(s_k)$ is a self-inverse sequence, then
\begin{equation}
\sum_{k = 0}^n {( - 1)^k \binom nk2^k s_k }  = 0.
\end{equation}

\end{corollary}

\begin{corollary}
Let $j$ and $n$ be non-negative integers having the same parity. If $(s_k)$ is an anti-self-inverse sequence, that is $\sigma_k=-s_k$, $k=0,1,2,\ldots$, then
\begin{equation}
\sum_{k = 0}^n {( - 1)^k \binom{{n}}{k}\binom{{n - k}}{j}2^k s_k }  = 0.
\end{equation}
In particular, if $n$ is an even integer and $(s_k)$ is an anti-self-inverse sequence, then
\begin{equation}
\sum_{k = 0}^n {( - 1)^k \binom nk2^k s_k }  = 0.
\end{equation}

\end{corollary}

\begin{theorem}
Let $n$ be a non-negative integer. If $\{(\bar s_k), (\bar\sigma_k)\}$ and $\{(\bar t_k), (\bar\tau_k)\}$, $k=0,1,2,\ldots$, are binomial-transform pairs of the second kind, then
\begin{equation}
\sum_{k = 0}^n {\binom{{n}}{k}2^{ - k} \bar s_k \bar \tau _{n - k} }  = \sum_{k = 0}^n {\binom{{n}}{k}2^{ - k} \bar t_{n - k} \sum_{j = 0}^k {\binom{{k}}{j}\bar \sigma _j } } .
\end{equation}

\end{theorem}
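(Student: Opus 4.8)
The plan is to follow the operator-based strategy used in the proofs of Theorems~\ref{thm.slg7tm1} and~\ref{thm.njtayqa}. First I would introduce two linear operators $\mathcal M_x$ and $\mathcal M_y$ determined by $\mathcal M_x(x^j)=\bar s_j$ and $\mathcal M_y(y^j)=\bar t_j$ for every non-negative integer $j$. Since $\{(\bar s_k),(\bar\sigma_k)\}$ and $\{(\bar t_k),(\bar\tau_k)\}$ are binomial-transform pairs of the second kind, the lemma preceding Theorem~\ref{thm.slg7tm1} guarantees that these operators automatically satisfy $\mathcal M_x((1+x)^j)=\bar\sigma_j$ and $\mathcal M_y((1+y)^j)=\bar\tau_j$. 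The starting point is the polynomial identity
\begin{equation*}
\sum_{k=0}^n \binom nk 2^{-k} x^k (1+y)^{n-k} = \sum_{k=0}^n \binom nk 2^{-k}(2+x)^k y^{n-k},
\end{equation*}
both sides of which equal $\left(1+\tfrac x2+y\right)^n$ by the binomial theorem, so the equality is immediate.

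Next I would apply $\mathcal M_y$ to both sides, treating $x$ as a constant. On the left this replaces $(1+y)^{n-k}$ by $\bar\tau_{n-k}$, and on the right it replaces $y^{n-k}$ by $\bar t_{n-k}$, yielding
\begin{equation*}
\sum_{k=0}^n \binom nk 2^{-k} x^k \bar\tau_{n-k} = \sum_{k=0}^n \binom nk 2^{-k}(2+x)^k \bar t_{n-k}.
\end{equation*}
I would then apply $\mathcal M_x$. On the left, $\mathcal M_x(x^k)=\bar s_k$ produces directly the left-hand side of the claimed identity.

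The crucial step—and the only one needing a small maneuver—is evaluating $\mathcal M_x((2+x)^k)$ on the right. Writing $2+x=(1+x)+1$ and expanding by the binomial theorem gives $(2+x)^k=\sum_{j=0}^k\binom kj(1+x)^j$, so by linearity together with $\mathcal M_x((1+x)^j)=\bar\sigma_j$ one obtains $\mathcal M_x\bigl((2+x)^k\bigr)=\sum_{j=0}^k\binom kj\bar\sigma_j$. Substituting this into the right-hand side yields exactly the nested sum in the statement, completing the proof. The main obstacle is recognizing the decomposition $2+x=(1+x)+1$, which is precisely what converts the factor $(2+x)^k$ into the inner binomial transform $\sum_{j=0}^k\binom kj\bar\sigma_j$; once this is in place, the remainder is a routine composition of the two operators exactly as in Theorem~\ref{thm.slg7tm1}.
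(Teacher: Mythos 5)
Your proof is correct, but it takes a different route from the paper's. The paper dispatches this theorem in two lines by composing earlier results: it reads off from \eqref{pop9ybt} that $\bar a_k = 2^{-k}\bar s_k$ and $\bar\alpha_k = 2^{-k}\sum_{j=0}^k\binom kj\bar\sigma_j$ form a binomial-transform pair of the second kind, and then substitutes this pair into \eqref{o5suprg} (Theorem~\ref{thm.njtayqa}). You instead give a self-contained, one-shot operator argument from the bespoke polynomial identity $\sum_k\binom nk 2^{-k}x^k(1+y)^{n-k}=\sum_k\binom nk 2^{-k}(2+x)^k y^{n-k}$, with the decomposition $2+x=(1+x)+1$ doing the work of converting $(2+x)^k$ into the iterated binomial transform $\sum_j\binom kj\bar\sigma_j$. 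The two arguments encode the same underlying fact --- your evaluation of $\mathcal M_x\bigl((2+x)^k\bigr)$ is exactly the content of \eqref{pop9ybt} --- but your version does not presuppose \eqref{pop9ybt} or \eqref{o5suprg} and shows explicitly which single polynomial identity generates the theorem, whereas the paper's version is shorter and illustrates its general strategy of manufacturing new transform pairs from old ones. Both are valid; all steps in your argument (the binomial-theorem verification that both sides equal $(1+\tfrac x2+y)^n$, and the successive applications of $\mathcal M_y$ and $\mathcal M_x$) check out.
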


\begin{proof}
From~\eqref{pop9ybt}, we can identity the following binomial-transform pair of the second kind:
\begin{equation*}
\bar a_k  = 2^{ - k} \bar s_k \text{ and } \bar\alpha _k  = 2^{ - k}\sum_{j = 0}^k {\binom{{k}}{j}\bar \sigma _j }. 
\end{equation*}
Replace $\bar s_k$ with $\bar a_k$ and $\bar\sigma_k$ with $\bar\alpha_k$ in~\eqref{o5suprg}. This completes the proof.
\end{proof}

\begin{theorem}
Let $n$ be a non-negative integer. If $\{(s_k), (\sigma_k)\}$ and $\{(t_k), (\tau_k)\}$, $k=0,1,2,\ldots$, are binomial-transform pairs of the first kind, then
\begin{equation}
\sum_{k = 1}^n {( - 1)^{n - k - 1} \binom{{n}}{k}s_{k - 1} t_{n - k} }  = \sum_{k = 1}^n {( - 1)^k \binom{{n}}{k}\tau _{n - k} \sum_{j = 1}^k {\sigma _{j - 1} } } .
\end{equation}
\end{theorem}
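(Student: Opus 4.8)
The plan is to manufacture an auxiliary binomial-transform pair of the first kind out of $\{(s_k),(\sigma_k)\}$ by means of Theorem~\ref{thm.hq03eji}, and then feed that pair into the basic convolution identity~\eqref{main1} together with $\{(t_k),(\tau_k)\}$. Applying Theorem~\ref{thm.hq03eji} to $\{(s_k),(\sigma_k)\}$ (in the role of its $\{(t_k),(\tau_k)\}$) shows that the sequences
\[
a_k = -\sum_{j=1}^k \sigma_{j-1}, \qquad \alpha_k = (1-\delta_{k0})\,s_{k-1+\delta_{k0}},
\]
form a binomial-transform pair of the first kind. One reads off that $a_0=0$ (an empty sum), $\alpha_0=0$, and $\alpha_k=s_{k-1}$ for $k\ge 1$; these vanishing boundary terms will let every summation below start at $k=1$.

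Next I would exploit the involutive character of the first-kind transform recorded in the equivalence~\eqref{first}: since $\{(a_k),(\alpha_k)\}$ is a pair, so is the reversed pair $\{(\alpha_k),(a_k)\}$. This swap is precisely what is needed to place the partial sum $\sum_{j=1}^k\sigma_{j-1}$ on the \emph{transform} side, matching the right-hand member of the target. I then apply~\eqref{main1} with $\{(\alpha_k),(a_k)\}$ as the first pair and $\{(t_k),(\tau_k)\}$ as the second pair, which gives
\[
\sum_{k=0}^n (-1)^{n-k}\binom{n}{k}\alpha_k\, t_{n-k} = \sum_{k=0}^n (-1)^k\binom{n}{k} a_k\, \tau_{n-k}.
\]

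Finally I would simplify both sides. Because $\alpha_0=a_0=0$, each sum runs effectively from $k=1$; substituting $\alpha_k=s_{k-1}$ and $a_k=-\sum_{j=1}^k\sigma_{j-1}$ turns the display into
\[
\sum_{k=1}^n (-1)^{n-k}\binom{n}{k} s_{k-1}\, t_{n-k} = -\sum_{k=1}^n (-1)^k\binom{n}{k}\tau_{n-k}\sum_{j=1}^k\sigma_{j-1}.
\]
Multiplying through by $-1$ and using $-(-1)^{n-k}=(-1)^{n-k-1}$ produces exactly the stated identity. The only real care needed is the sign bookkeeping—coordinating the explicit minus sign carried by $a_k$ with the parity shift from $(-1)^{n-k}$ to $(-1)^{n-k-1}$—together with verifying that the $k=0$ contributions drop out so that both summations legitimately begin at $k=1$; everything else is a direct substitution.
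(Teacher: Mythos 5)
Your proof is correct and follows essentially the same route as the paper: both invoke Theorem~\ref{thm.hq03eji} to produce the auxiliary pair built from $(1-\delta_{k0})s_{k-1+\delta_{k0}}$ and $-\sum_{j=1}^k\sigma_{j-1}$, and then substitute it into~\eqref{main1}. The only cosmetic difference is that you make the swap of roles (justified by the symmetry of~\eqref{first}) explicit, whereas the paper simply writes the pair in the convenient order from the start.
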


\begin{proof}
In view of the result stated in~Theorem~\ref{thm.hq03eji}, let
\begin{equation*}
a_k=(1 - \delta _{k0} )s_{k - 1 + \delta _{k0} } \text{ and }\alpha_k= - \sum_{j = 1}^k {\sigma _{j - 1} },\quad k=0,1,2,\ldots .
\end{equation*}
Now, replace $s_k$ with $a_k$ and $\sigma_k$ with $\alpha_k$ in~\eqref{main1}.
\end{proof}
We bring this section to a close by giving another generalization of~\eqref{main1} based on the result from Corollary~\ref{cor.l9mldgr}.
\begin{theorem}
Let $n$ be a non-negative integer. Let $\{(s_k), (\sigma_k)\}$ and $\{(t_k), (\tau_k)\}$, $k=0,1,2,\ldots$, are binomial-transform pairs of the first kind. Let $S_0(k)=\sigma_k$ and $S_m(k)=k(S_{m-1}(k)-S_{m-1}(k-1))$ for every positive integer $m$. Then
\begin{equation}
\sum_{k = 0}^n {( - 1)^{n - k} \binom{{n}}{k}k^m s_k t_{n - k} }  = \sum_{k = 0}^n {( - 1)^k \binom{{n}}{k}S_m (k)\tau _{n - k} } .
\end{equation}
\end{theorem}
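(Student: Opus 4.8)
The plan is to recognize that the two sequences appearing on the two sides, namely $(k^m s_k)$ and $(S_m(k))$, already constitute a binomial-transform pair of the first kind, so that the whole identity drops out of a single application of the master identity~\eqref{main1}.

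First I would invoke Corollary~\ref{cor.l9mldgr}, which asserts precisely that $\{(k^m s_k),(S_m(k))\}$, $k=0,1,2,\ldots$, is a binomial-transform pair of the first kind: writing $a_k=k^m s_k$ for the original sequence and $\alpha_k=S_m(k)$ for its transform, the corollary gives $\sum_{k=0}^n(-1)^k\binom nk a_k=\alpha_n$ for every non-negative integer $n$, which is exactly the defining relation~\eqref{first} for such a pair. Thus no further verification that $(a_k)$ and $(\alpha_k)$ are a transform pair is needed; it is the content of Corollary~\ref{cor.l9mldgr}.

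Next I would substitute this pair into~\eqref{main1} in place of $\{(s_k),(\sigma_k)\}$, retaining $\{(t_k),(\tau_k)\}$ as the second pair. Applying~\eqref{main1} to $\{(a_k),(\alpha_k)\}$ and $\{(t_k),(\tau_k)\}$ produces
\begin{equation*}
\sum_{k=0}^n(-1)^{n-k}\binom nk a_k t_{n-k}=\sum_{k=0}^n(-1)^k\binom nk\alpha_k\tau_{n-k},
\end{equation*}
and restoring $a_k=k^m s_k$ and $\alpha_k=S_m(k)$ yields the claimed identity verbatim.

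There is no real obstacle here, since the entire combinatorial content has been packaged into Corollary~\ref{cor.l9mldgr}; this theorem is the anticipated payoff of that corollary, exactly parallel to the way each earlier theorem in this section pairs a freshly constructed transform pair with~\eqref{main1}. The only point that requires care is the bookkeeping of which sequence plays the original ($s$) role and which plays the transform ($\sigma$) role, so that the sign factors $(-1)^{n-k}$ and $(-1)^k$ land on the correct sides; this matching is pinned down by the direction of the transform in Corollary~\ref{cor.l9mldgr}, where $k^m s_k$ is the original sequence and $S_m(k)$ is its transform, in agreement with the placement of $k^m s_k$ against $(-1)^{n-k}$ and of $S_m(k)$ against $(-1)^k$ in the statement.
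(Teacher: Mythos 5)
Your proposal matches the paper's proof exactly: the paper likewise cites Corollary~\ref{cor.l9mldgr} to identify $\{(k^m s_k),(S_m(k))\}$ as a binomial-transform pair of the first kind and then substitutes it for $\{(s_k),(\sigma_k)\}$ in~\eqref{main1}. Your extra attention to which sequence plays the original versus the transform role is sound and consistent with the paper's sign placement.
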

In particular,
\begin{equation}
\sum_{k = 1}^n {( - 1)^{n - k} \binom{{n}}{k}k s_k t_{n - k} }  = \sum_{k = 1}^n {( - 1)^k \binom{{n}}{k}k(\sigma_k-\sigma_{k-1})\tau _{n - k} } ,
\end{equation}
with the special value
\begin{equation}
\sum_{k = 1}^n {( - 1)^{n - k - 1} \binom nkt_{n - k} }  = \sum_{k = 1}^n {( - 1)^k \binom nk\tau _{n - k} } ,
\end{equation}
obtained with $s_k=-1/k$ and $\sigma_k=H_k$.
\begin{proof}
Replace $s_k$ with $k^ms_k$ and $\sigma_k$ with $S_m(k)$ in~\eqref{main1}.
\end{proof}
\section{Polynomial identities involving a binomial transform pair}

We have already encountered the identities stated in the next theorem; they are variations on~\eqref{eq.renl1hy} and~\eqref{eq.cr8jcr4}.

\begin{theorem}\label{thm.xzlyuwd}
If $n$ is a non-negative integer and $y$ is a complex variable, then
\begin{align}
\sum_{k = 0}^n { \binom{{n}}{k}s_{n - k}y^k  }  = \sum_{k = 0}^n {( - 1)^{n-k} \binom{{n}}{k}\sigma _{n - k}\left( {1 + y} \right)^k }\label{eq.t182qp7},\\
\sum_{k = 0}^n {( - 1)^k \binom{{n}}{k}\bar s_{n - k}y^k  }  = \sum_{k = 0}^n {( - 1)^k \binom{{n}}{k}\bar\sigma _{n - k}\left( {1 + y} \right)^k } \label{eq.l6e3qzc}.
\end{align}
\end{theorem}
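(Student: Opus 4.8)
The plan is to derive both identities directly from the intermediate relations already established inside the proofs of Theorems~\ref{thm.main1} and~\ref{thm.slg7tm1}, rather than running a fresh argument from scratch; indeed equations~\eqref{eq.renl1hy} and~\eqref{eq.cr8jcr4} are essentially the polynomial identities we want, up to a relabeling of the sequences and, in the first-kind case, a single sign substitution.

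First I would dispatch~\eqref{eq.l6e3qzc}, which is the cheaper of the two. Relabeling the second-kind pair $\{(\bar t_k),(\bar\tau_k)\}$ appearing in~\eqref{eq.cr8jcr4} as $\{(\bar s_k),(\bar\sigma_k)\}$ turns~\eqref{eq.cr8jcr4} verbatim into~\eqref{eq.l6e3qzc}, so no further work is needed in the second-kind case.

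For~\eqref{eq.t182qp7} I would start from~\eqref{eq.renl1hy}, written for the first-kind pair $\{(s_k),(\sigma_k)\}$, namely
\begin{equation*}
\sum_{k = 0}^n {( - 1)^{n - k} \binom{{n}}{k}y^k s_{n - k}  }  = \sum_{k = 0}^n {( - 1)^k \binom{{n}}{k}\left( {1 - y} \right)^k\sigma _{n - k} },
\end{equation*}
and perform the substitution $y\mapsto -y$. On the left this collapses $(-1)^{n-k}(-y)^k$ to $(-1)^n y^k$, so a global factor $(-1)^n$ pulls out of the sum; on the right $(1-y)^k$ becomes $(1+y)^k$. Multiplying both sides by $(-1)^n$ then produces exactly~\eqref{eq.t182qp7}.

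The only real obstacle is the sign bookkeeping in this substitution step: one must track carefully that $(-1)^{n-k}(-y)^k$ simplifies to $(-1)^n y^k$ and that the surviving global factor $(-1)^n$ is cleared correctly. If a fully self-contained derivation is preferred instead, I would apply the operator $\mathcal L_x$ of the first-kind Lemma to the polynomial identity $\sum_{k=0}^n\binom nk y^k x^{n-k}=\sum_{k=0}^n(-1)^{n-k}\binom nk(1+y)^k(1-x)^{n-k}$ (both sides equal $(x+y)^n$) to obtain~\eqref{eq.t182qp7}, and apply the operator $\mathcal M_x$ of the second-kind Lemma to the identity $\sum_{k=0}^n(-1)^k\binom nk y^k x^{n-k}=\sum_{k=0}^n(-1)^k\binom nk(1+y)^k(1+x)^{n-k}$ (both sides equal $(x-y)^n$) to obtain~\eqref{eq.l6e3qzc}; in each case the Lemma supplies $\mathcal L_x(x^{n-k})=s_{n-k}$, $\mathcal L_x((1-x)^{n-k})=\sigma_{n-k}$ and $\mathcal M_x(x^{n-k})=\bar s_{n-k}$, $\mathcal M_x((1+x)^{n-k})=\bar\sigma_{n-k}$, so the two displays convert termwise into the claimed identities.
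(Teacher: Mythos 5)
Your proposal is correct and follows essentially the same route as the paper, which offers no separate argument beyond remarking that the two identities are ``variations on~\eqref{eq.renl1hy} and~\eqref{eq.cr8jcr4}''; you simply make the variation explicit (relabeling for~\eqref{eq.l6e3qzc}, and the substitution $y\mapsto -y$ followed by clearing the global $(-1)^n$ for~\eqref{eq.t182qp7}), and your sign bookkeeping checks out. The self-contained alternative via $\mathcal L_x$ and $\mathcal M_x$ acting on the two $(x\pm y)^n$ expansions is also valid and matches the operator technique used throughout the paper.
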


The implication of Theorem~\ref{thm.xzlyuwd} is that every binomial-transform pair has associated with it a polynomial identity.

\begin{remark}
Identity~\eqref{eq.l6e3qzc} is equivalent to Boyadzhiev's~\cite[Corollary 1]{boyadzhiev16}.
\end{remark}

\section{Binomial transform identities associated with polynomial identities of a certain type}
Polynomial identities of the following form abound in the literature:
\begin{equation}\label{poly}
\sum_{k = 0}^n {f(k)t^{p(k)} }  = \sum_{k = 0}^m {g(k)\left( {1 - t} \right)^{q(k)} } ;
\end{equation}
where $m$ and $n$ are non-negative integers, $t$ is a complex variable, $p(k)$ and $q(k)$ are sequences of integers, and $f(k)$ and $g(k)$ are sequences of complex numbers.

The binomial identities stated in Theorems~\ref{thm.qs3j4nk} and~\ref{thm.ru7y30f} are readily derived using the operators $\mathcal L_y$ and $\mathcal M_y$.
\begin{theorem}\label{thm.qs3j4nk}
Let a polynomial identity have the form stated in~\eqref{poly}. If $\{(s_k), (\sigma_k)\}$ is a binomial-transform pair of the first kind, then
\begin{equation}
\sum_{k = 0}^n {f(k)s_{p(k)} }  = \sum_{k = 0}^m {g(k)\sigma _{q(k)} } .
\end{equation}

\end{theorem}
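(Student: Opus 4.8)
The plan is to apply the linear operator $\mathcal L_t$ of the first kind directly to the polynomial identity~\eqref{poly}. Given the binomial-transform pair $\{(s_k),(\sigma_k)\}$, I would define $\mathcal L_t$ by $\mathcal L_t(t^j)=s_j$ for every non-negative integer $j$ and extend it linearly to all polynomials in $t$. By the lemma establishing that $\mathcal L_x\bigl((1-x)^j\bigr)=\alpha_j$ for any first-kind pair $\{(a_k),(\alpha_k)\}$, this same operator also satisfies $\mathcal L_t\bigl((1-t)^j\bigr)=\sigma_j$. These two properties are the only facts needed.

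First I would record that, since~\eqref{poly} holds for every complex value of $t$, its two sides are literally the same polynomial in $t$, so $\mathcal L_t$ returns the same value on either side. Applying $\mathcal L_t$ to the left-hand side and using linearity together with $\mathcal L_t\bigl(t^{p(k)}\bigr)=s_{p(k)}$ gives
\begin{equation*}
\mathcal L_t\left(\sum_{k=0}^n f(k)t^{p(k)}\right)=\sum_{k=0}^n f(k)\,\mathcal L_t\bigl(t^{p(k)}\bigr)=\sum_{k=0}^n f(k)s_{p(k)}.
\end{equation*}
Applying $\mathcal L_t$ to the right-hand side and invoking $\mathcal L_t\bigl((1-t)^{q(k)}\bigr)=\sigma_{q(k)}$ yields
\begin{equation*}
\mathcal L_t\left(\sum_{k=0}^m g(k)(1-t)^{q(k)}\right)=\sum_{k=0}^m g(k)\,\mathcal L_t\bigl((1-t)^{q(k)}\bigr)=\sum_{k=0}^m g(k)\sigma_{q(k)}.
\end{equation*}
Equating these two outputs produces the claimed identity.

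The only point requiring care --- and the place where the real content sits --- is the justification that $\mathcal L_t$ may be applied legitimately to both sides. One must observe that $\mathcal L_t$ is well defined on polynomials because every polynomial has a unique expansion in the monomial basis $\{t^j\}$, and that the exponents $p(k)$ and $q(k)$ are non-negative integers, so that $t^{p(k)}$ and $(1-t)^{q(k)}$ are genuine polynomials and the symbols $s_{p(k)}$, $\sigma_{q(k)}$ are defined. The hypothesis that~\eqref{poly} is an \emph{identity} in $t$, rather than an equality at isolated points, is exactly what forces the two sides to coincide as polynomials and hence to have equal image under the linear functional $\mathcal L_t$; this is the step that cannot be skipped. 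The symmetric companion statement (Theorem~\ref{thm.ru7y30f}) would follow identically, replacing $\mathcal L_t$ by the second-kind operator $\mathcal M_t$ and using $\mathcal M_t\bigl((1+t)^j\bigr)=\bar\sigma_j$ on an identity written with a factor $(1+t)^{q(k)}$.
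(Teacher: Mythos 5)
Your proposal is correct and is exactly the argument the paper intends: the paper's entire proof is the one-line remark that Theorems~\ref{thm.qs3j4nk} and~\ref{thm.ru7y30f} ``are readily derived using the operators $\mathcal L_y$ and $\mathcal M_y$,'' and you have simply written out that application of $\mathcal L_t$ to both sides of~\eqref{poly}, using the lemma $\mathcal L_t\bigl((1-t)^j\bigr)=\sigma_j$. Your added care about the exponents being non-negative and the two sides agreeing as polynomials is a reasonable filling-in of details the paper leaves implicit.
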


\begin{theorem}\label{thm.ru7y30f}
Let a polynomial identity have the form stated in~\eqref{poly}. If $\{(\bar s_k), (\bar\sigma_k)\}$ is a binomial-transform pair of the second kind, then
\begin{equation}
\sum_{k = 0}^n {( - 1)^{p(k)} f(k)\bar s_{p(k)} }  = \sum_{k = 0}^m {g(k)\bar \sigma _{q(k)} } .
\end{equation}

\end{theorem}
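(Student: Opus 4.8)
The plan is to mimic the proof of Theorem~\ref{thm.qs3j4nk}, but with the second-kind operator replacing the first-kind one. Recall from the lemma preceding Theorem~\ref{thm.slg7tm1} that if one defines a linear operator $\mathcal M_y$ by $\mathcal M_y(y^j)=\bar s_j$ for every non-negative integer $j$, then automatically $\mathcal M_y\big((1+y)^j\big)=\bar\sigma_j$. The essential point is that this operator is adapted to powers of $(1+y)$, not of $(1-y)$, whereas the given polynomial identity~\eqref{poly} is written in terms of $(1-t)$. I would therefore first convert $(1-t)$ into $(1+y)$ by a change of variable before applying the operator.

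Concretely, I would substitute $t=-y$ in~\eqref{poly}. Since $t^{p(k)}=(-1)^{p(k)}y^{p(k)}$ and $1-t=1+y$, this turns~\eqref{poly} into the polynomial identity
\begin{equation*}
\sum_{k=0}^n (-1)^{p(k)} f(k)\,y^{p(k)} = \sum_{k=0}^m g(k)\,(1+y)^{q(k)},
\end{equation*}
valid for all complex $y$. Because the coefficients $f(k)$ and $g(k)$ are scalars independent of $y$, applying $\mathcal M_y$ to both sides commutes with the finite sums and the scalars; using $\mathcal M_y\big(y^{p(k)}\big)=\bar s_{p(k)}$ on the left and $\mathcal M_y\big((1+y)^{q(k)}\big)=\bar\sigma_{q(k)}$ on the right then yields exactly the claimed identity $\sum_{k=0}^n(-1)^{p(k)}f(k)\bar s_{p(k)}=\sum_{k=0}^m g(k)\bar\sigma_{q(k)}$.

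There is no substantial obstacle here; the only point requiring care is well-definedness. The operator $\mathcal M_y$ is defined on powers $y^j$ with $j$ a non-negative integer, so one must ensure that every exponent $p(k)$ and $q(k)$ occurring in~\eqref{poly} is a non-negative integer, which is the standing hypothesis on the sequences $p$ and $q$; then $y^{p(k)}$ and $(1+y)^{q(k)}$ are genuine polynomials to which $\mathcal M_y$ applies term by term. The contrast with Theorem~\ref{thm.qs3j4nk} is instructive: there the first-kind operator $\mathcal L_t$ already matches $(1-t)$, so no substitution and hence no sign factor arise, whereas the mismatch between $(1-t)$ and the second-kind operator's natural argument $(1+y)$ is precisely what produces the factor $(-1)^{p(k)}$ in the present statement.
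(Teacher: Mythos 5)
Your proof is correct and is exactly the argument the paper intends: the paper merely remarks that Theorems~\ref{thm.qs3j4nk} and~\ref{thm.ru7y30f} "are readily derived using the operators $\mathcal L_y$ and $\mathcal M_y$," and the substitution $t=-y$ followed by application of $\mathcal M_y$ is precisely the conversion device described in the paper's Section 9. Your observation that the mismatch between $(1-t)$ and the second-kind operator's natural argument $(1+y)$ is the source of the factor $(-1)^{p(k)}$ correctly identifies the only point of substance.
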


\begin{lemma}{Sun~\cite[Lemma 3.1]{sun03}}
If $m$, $n$ and $r$ are non-negative integers and $t$ is a complex variable, then
\begin{equation}\label{lpj4ylb}
\sum_{k = 0}^n {( - 1)^{k - r} \binom{{n}}{k}\binom{{k + m}}{r}t^{k + m - r} }  = \sum_{k = 0}^m {(-1)^k\binom{{m}}{k}\binom{{k + n}}{r}\left( {1 - t} \right)^{n + k - r} } .
\end{equation}
\end{lemma}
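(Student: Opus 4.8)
The plan is to recognize both sides of~\eqref{lpj4ylb} as one and the same $r$-th derivative of the single polynomial $t^m(1-t)^n$, up to a common constant. The key observation is the elementary differentiation rule $\frac{d^r}{dt^r}t^N = r!\binom{N}{r}t^{N-r}$, valid for every non-negative integer $N$ (both sides vanishing when $N<r$, in agreement with the convention $\binom{N}{r}=0$ there). In particular $\binom{k+m}{r}t^{k+m-r} = \frac{1}{r!}\frac{d^r}{dt^r}t^{k+m}$, so each summand on the left carries a hidden derivative that I can pull out of the finite sum.

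First I would treat the left-hand side. Since $\frac{d^r}{dt^r}$ is linear and $(-1)^{k-r}=(-1)^r(-1)^k$, extracting the derivative and collapsing the inner sum by the binomial theorem gives
\begin{equation*}
\sum_{k = 0}^n ( - 1)^{k - r} \binom{n}{k}\binom{k + m}{r}t^{k + m - r}=\frac{(-1)^r}{r!}\frac{d^r}{dt^r}\left[t^m\sum_{k=0}^n\binom{n}{k}(-t)^k\right]=\frac{(-1)^r}{r!}\frac{d^r}{dt^r}\left[t^m(1-t)^n\right].
\end{equation*}
Next I would handle the right-hand side by the mirror computation, now using $\frac{d^r}{dt^r}(1-t)^N=(-1)^r r!\binom{N}{r}(1-t)^{N-r}$, so that $\binom{k+n}{r}(1-t)^{n+k-r}=\frac{(-1)^r}{r!}\frac{d^r}{dt^r}(1-t)^{n+k}$. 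Factoring out $(1-t)^n$ and applying the binomial theorem to $\sum_{k=0}^m\binom{m}{k}(-(1-t))^k=(1-(1-t))^m=t^m$ yields
\begin{equation*}
\sum_{k = 0}^m (-1)^k\binom{m}{k}\binom{k + n}{r}( 1 - t)^{n + k - r}=\frac{(-1)^r}{r!}\frac{d^r}{dt^r}\left[(1-t)^n\,t^m\right],
\end{equation*}
which is identical to the expression produced for the left-hand side. The two sides therefore coincide as polynomials in $t$, and the lemma follows.

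I expect no serious obstacle here; the only point deserving a word of care is the degenerate range $N<r$, where one must confirm that the vanishing of the derivative and the vanishing of $\binom{N}{r}$ match up, so that truncating the summation ranges at $k\le n$ and $k\le m$ introduces no spurious terms. Should one wish to avoid calculus altogether, an alternative is to expand both sides in powers of $t$ and reduce the coefficient comparison to a Chu--Vandermonde convolution; but the derivative route above is shorter and renders the $t\leftrightarrow 1-t$ symmetry between the two sides completely transparent.
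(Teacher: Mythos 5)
Your proof is correct, but note that the paper itself offers no proof of this lemma: it is imported verbatim from Sun~\cite[Lemma 3.1]{sun03}, so there is no internal argument to compare against. Your derivative computation checks out in every detail: the sign bookkeeping works because $(-1)^{k-r}=(-1)^r(-1)^k$ and $(-1)^{2r}=1$, the rule $\frac{d^r}{dt^r}(1-t)^N=(-1)^r r!\binom{N}{r}(1-t)^{N-r}$ is applied correctly, and the degenerate terms with $k+m<r$ (resp.\ $k+n<r$) are genuinely zero on both sides of the correspondence, so the interchange of $\frac{d^r}{dt^r}$ with the finite sums is harmless and both sides collapse to $\frac{(-1)^r}{r!}\frac{d^r}{dt^r}\bigl[t^m(1-t)^n\bigr]$. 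It is worth observing that the lemma also falls out of the paper's own machinery: the identity $\sum_{i=0}^{j}(-1)^i\binom{j}{i}\binom{i}{r}t^{i-r}=(-1)^r\binom{j}{r}(1-t)^{j-r}$ (a consequence of $\binom{j}{i}\binom{i}{r}=\binom{j}{r}\binom{j-r}{i-r}$ and the binomial theorem) exhibits $s_j=\binom{j}{r}t^{j-r}$ and $\sigma_j=(-1)^r\binom{j}{r}(1-t)^{j-r}$ as a binomial-transform pair of the first kind, and substituting this pair into the paper's symmetry identity~\eqref{hl4i644} reproduces~\eqref{lpj4ylb} after multiplying through by $(-1)^r$. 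That route stays entirely within the paper's operator framework, whereas yours is self-contained and makes the $t\leftrightarrow 1-t$ symmetry transparent; either is a legitimate replacement for the external citation.
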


\begin{example}
In~\eqref{lpj4ylb} we can identify
\begin{equation}
f(k) = ( - 1)^{k - r} \binom{{n}}{k}\binom{{k + m}}{r},\quad g(k) = \binom{{m}}{k}\binom{{k + n}}{r},
\end{equation}
and 
\begin{equation}
p(k) = k + m - r \mbox{ and } q(k) = n + k - r.
\end{equation}
Using these in Theorem~\ref{thm.qs3j4nk} gives
\begin{equation}\label{k0gxz3k}
\sum_{k = 0}^n {( - 1)^{k - r} \binom{{n}}{k}\binom{{k + m}}{r}s_{k + m - r} }  = \sum_{k = 0}^m {(-1)^k\binom{{m}}{k}\binom{{k + n}}{r}\sigma_{n + k - r} },
\end{equation}
for every binomial-transform pair $\{(s_k),(\sigma_k)\}$ of the first kind.
\end{example}

\begin{remark}
Identity~\eqref{k0gxz3k} is Chen's Theorem 3.2~\cite{chen07}.
\end{remark}

\section{Conversion between binomial transform identities}
If a binomial transform identity involves a binomial-transform pair of the first kind, it is straightforward to convert it to an identity involving a binomial-transform pair of the second kind; and vice versa. An identity involving $\{ s(k),\sigma(k)\}$, $k=0,1,2,\dots$, can be converted to that involving $\{ \bar s(k),\bar\sigma(k)\}$ by choosing $s(k)=x^k$ and $\sigma(k)=(1-x)^k$, replacing $x$ with $-x$ and then operating on the resulting identity with $\mathcal M_x$. Similarly, an identity involving $\{ \bar s(k),\bar\sigma(k)\}$ can be converted to that involving $\{ s(k),\sigma(k)\}$ by choosing $\bar s(k)=x^k$ and $\bar\sigma(k)=(1+x)^k$, replacing $x$ with $-x$ and then operating on the resulting identity with $\mathcal L_x$.

\begin{example}
Let $\{(\bar t_k),(\bar\tau_k)\}$, $k=0,1,2,\ldots$,  be a binomial-transform pair of the second kind. Chen's main result~\cite[Theorem 3.1]{chen07} (after  Gould and Quaintance's simplification~\cite[Equation (10)]{gould14}) for non-negative integers $m$, $n$ and $s$ is
\begin{align}\label{eq.ollq3g3}
&\sum_{k = 0}^m {\binom{{m}}{k}\binom{{n + k + s}}{s}^{ - 1} \bar t_{n + k + s} }\nonumber\\
&\qquad= \sum_{k = 0}^n {( - 1)^{n - k} \binom{{n}}{k}\binom{{m + k + s}}{s}^{ - 1} \bar \tau _{m + k + s} }\nonumber\\
&\qquad\qquad  + \sum_{k = 0}^{s - 1} {\frac{{( - 1)^{n + s - k} s}}{{m + n + s - k}}\binom{{s - 1}}{k}\binom{{m + n + s - k - 1}}{n}^{ - 1} \bar \tau _k } .
\end{align}
In order to convert~\eqref{eq.ollq3g3} to an identity for the binomial-transform pair of the first kind, $\{(t_k),(\tau_k)\}$, $k=0,1,2,\ldots$, we choose $\bar t_k=x^k$ and $\bar\tau_k=(1+x)^k$ and replace $x$ with $-x$ to obtain
\begin{align*}
&\sum_{k = 0}^m {(-1)^k\binom{{m}}{k}\binom{{n + k + s}}{s}^{ - 1} x^{n + k + s} }\nonumber\\
&\qquad= (-1)^s\sum_{k = 0}^n {( - 1)^k \binom{{n}}{k}\binom{{m + k + s}}{s}^{ - 1} (1 - x)^{m + k + s} }\nonumber\\
&\qquad\qquad  + \sum_{k = 0}^{s - 1} {\frac{{( - 1)^k s}}{{m + n + s - k}}\binom{{s - 1}}{k}\binom{{m + n + s - k - 1}}{n}^{ - 1} (1 - x)^k } .
\end{align*}
Operating on the above equation with the linear operator $\mathcal L_x$ now gives
\begin{align}\label{eq.coax3mm}
&\sum_{k = 0}^m {(-1)^k\binom{{m}}{k}\binom{{n + k + s}}{s}^{ - 1} t_{n + k + s} }\nonumber\\
&\qquad= (-1)^s\sum_{k = 0}^n {( - 1)^k \binom{{n}}{k}\binom{{m + k + s}}{s}^{ - 1} \tau_{m + k + s} }\nonumber\\
&\qquad\qquad  + \sum_{k = 0}^{s - 1} {\frac{{( - 1)^k s}}{{m + n + s - k}}\binom{{s - 1}}{k}\binom{{m + n + s - k - 1}}{n}^{ - 1} \tau_k } .
\end{align}

\end{example}

\begin{example}
Another example, Chen~\cite[Theorem 3.2]{chen07} is
\begin{equation}
\sum_{k = 0}^m {\binom{{m}}{k}\binom{{n + k}}{s}\bar t_{n + k - s} }  = \sum_{k = 0}^n {( - 1)^{n - k}\binom{{n}}{k}\binom{{m + k}}{s} \bar \tau _{m + k - s} } ;
\end{equation}
whose corresponding version for a binomial-transform pair of the first kind is
\begin{equation}
\sum_{k = 0}^m {(-1)^{s-k}\binom{{m}}{k}\binom{{n + k}}{s} t_{n + k - s} }  = \sum_{k = 0}^n {( - 1)^k\binom{{n}}{k}\binom{{m + k}}{s} \tau _{m + k - s} } .
\end{equation}

\end{example}

\begin{example}
Gould and Quaintance~\cite[Theorem 3]{gould14} gave the following identity involving a binomial transform pair of the second kind:
\begin{align}
&\sum_{k = 0}^s {\binom{{s}}{k}\binom{{m + n + s - k}}{m}^{ - 1} \frac{{\bar t_k }}{{m + n + s + 1 - k}}}\nonumber \\
&\qquad = \sum_{k = 0}^s {\binom{{s}}{k}\binom{{m + n + s - k}}{n}^{ - 1} \frac{{( - 1)^{s - k}\bar \tau_k }}{{m + n + s + 1 - k}}} ,
\end{align}
which is valid for every non-negative integer $s$ and all complex numbers $m$ and $n$ excluding the set of negative integers. 

The corresponding result for a binomial-transform pair of the first kind is
\begin{align}
&\sum_{k = 0}^s {(-1)^k\binom{{s}}{k}\binom{{m + n + s - k}}{m}^{ - 1} \frac{{t_k }}{{m + n + s + 1 - k}}}\nonumber \\
&\qquad = \sum_{k = 0}^s {\binom{{s}}{k}\binom{{m + n + s - k}}{n}^{ - 1} \frac{{( - 1)^{s - k}\tau_k }}{{m + n + s + 1 - k}}} .
\end{align}
\end{example}

\begin{remark}\label{rem.r7dv221}
In view of the foregoing, an identity involving a binomial-transform pair of the first kind $\{s_k,\sigma_k\}$, \mbox{$k=0,1,2,\ldots$}, can always be converted to a binomial-transform pair identity of the second kind $\{\bar s_k,\bar\sigma_k\}$ by doing 
\begin{equation*}
s_k\to(-1)^k\bar s_k,\quad\sigma_k\to\bar\sigma_k.
\end{equation*}
Similarly, conversion of a second kind transform pair identity to a first kind transform pair is achieved through
\begin{equation*}
\bar s_k\to(-1)^ks_k,\quad\bar\sigma_k\to\sigma_k.
\end{equation*}
\end{remark}

\section{A table of binomial-transform pairs}\label{sec.table}
Many identities involving binomial transforms are known in the literature; ready examples can be found in the books by Gould~\cite{gould} and Boyadzhiev~\cite{boyadzhiev16} and in the numerous references therein. In this section we add to the growing list of binomial-transform pairs.

\begin{lemma}\label{lem.j5d7kdb}
If $r$ is a non-negative integer, then
\begin{equation}\label{hiigp9e}
\int_0^1 {\left( {1 + x} \right)^r \ln \left( {1 + x} \right)dx}  = \frac{{1 - 2^{r + 1} }}{{\left( {r + 1} \right)^2 }} + \frac{{2^{r + 1} }}{{r + 1}}\ln 2
\end{equation}
and
\begin{equation}\label{mhlgo8i}
\int_0^1 {x^r \ln \left( {1 + x} \right)dx}
=\frac1{r+1} 
\begin{cases}
 H_{r + 1}  - H_{(r + 1)/2} ,&\text{if $r$ is odd;} \\ 
 H_{r + 1}  - 2\,O_{(r + 2)/2}  + 2\ln 2,&\text{if $r$ is even.} \\ 
 \end{cases} 
\end{equation}
\end{lemma}
\begin{proof}
Identity~\eqref{hiigp9e} is obvious. A simple change of variable in the known integral
\begin{equation}
\int_0^1 {x^r \ln \left( {1 - x} \right)dx} =  - \frac{{H_{r + 1} }}{{r + 1}}
\end{equation}
gives
\begin{equation}
\int_0^1 {x^r \ln \left( {1 + x} \right)dx}  = \frac{{H_{r + 1}  - H_{(r + 1)/2} }}{{r + 1}},\quad r\in\mathbb C\setminus\mathbb Z^{-},
\end{equation}
of which~\eqref{mhlgo8i} is a special case, since
\begin{equation*}
H_{k+1/2}=2\,O_{k+1}-2\ln 2.
\end{equation*}
\end{proof}

\begin{lemma}\label{btransform}
Let $(a_k)$ and $(\alpha_k)$, $k=0,1,2,\ldots$, be a binomial-transform pair of the first kind. Let $\mathcal N_k$ be the linear operator defined by
\begin{equation*}
\mathcal N_k (a_k ) = \sum_{j = 0}^k {( - 1)^j \binom{{k}}{j}a_j }  = \alpha _k .
\end{equation*}
Then
\begin{align}
\mathcal N_k (\alpha _k ) = a_k,\\ 
\mathcal N_k^2 (\alpha _k ) = \alpha _k, \\
\mathcal N_k^2 (a _k ) = a _k, 
\end{align}
with the inverse relations
\begin{equation}
\mathcal N_k^{-1} (a _k ) = \alpha _k
\end{equation}
and
\begin{equation}
\mathcal N_k^{-1} (\alpha _k ) = a _k .
\end{equation}

\end{lemma}

\begin{proposition}
If $n$ is a non-negative integer, then
\begin{equation}\label{t7montf}
\sum_{k = 0}^n {( - 1)^k \binom{{n}}{k}\frac{{2^k }}{{\left( {k + 1} \right)^2 }}}  = \frac{{O_{\left\lceil {\left( {n + 1} \right)/2} \right\rceil } }}{{n + 1}}
\end{equation}
and
\begin{align}\label{xr1j6uv}
\sum_{k = 0}^n {( - 1)^k \binom{{n}}{k}\frac{{2^k }}{{\left( {k + 1} \right)^3 }}}  &= \frac{1}{4}\frac{{H_{n + 1}^2  + H_{n + 1}^{(2)} }}{{n + 1}} - \frac{1}{8}\frac{{H_{\left\lceil {n/2} \right\rceil }^2  + H_{\left\lceil {n/2} \right\rceil }^{(2)} }}{{n + 1}} + \frac{1}{2}\frac{{O_{\left\lceil {(n + 1)/2} \right\rceil }^2  + O_{\left\lceil {(n + 1)/2} \right\rceil }^{(2)} }}{{n + 1}}\nonumber\\
&\qquad - \frac{1}{2}\frac{1}{{n + 1}}\sum_{k = 0}^n {( - 1)^k \frac{{H_{k + 1} }}{{k + 1}}} .
\end{align}
\end{proposition}

\begin{proof}
Write~\eqref{vz664g0} as
\begin{equation*}
\sum_{k = 0}^n {( - 1)^k \binom{{n + 1}}{{k + 1}}\left( {1 + x} \right)^k }  = \sum_{k = 0}^{\left\lfloor {n/2} \right\rfloor } {x^{2k} }  - \sum_{k = 1}^{\left\lceil {n/2} \right\rceil } {x^{2k - 1} } ,
\end{equation*}
and termwise integrate from $0$ to $1$, using Lemma~\ref{lem.j5d7kdb}, to obtain
\begin{align*}
&\sum_{k = 0}^n {( - 1)^k \binom{{n + 1}}{{k + 1}}\frac{{1 - 2^{k + 1} }}{{\left( {k + 1} \right)^2 }}}  + \ln 2\sum_{k = 0}^n {( - 1)^k \binom{{n + 1}}{{k + 1}}\frac{{2^{k + 1} }}{{k + 1}}} \\
&\qquad = \sum_{k = 0}^{\left\lfloor {n/2} \right\rfloor } {\frac{{H_{2k + 1}  - 2O_{k + 1}  + 2\ln 2}}{{2k + 1}}}  - \sum_{k = 1}^{\left\lceil {n/2} \right\rceil } {\frac{{H_{2k}  - H_k }}{{2k}}} 
\end{align*}
Comparing rational and irrational parts leads to
\begin{equation}\label{uet8p9x}
\sum_{k = 0}^n {( - 1)^k \binom{{n + 1}}{{k + 1}}\frac{{2^k }}{{k + 1}}}  = \sum_{k = 0}^{\left\lfloor {n/2} \right\rfloor } {\frac{1}{{2k + 1}}}  = O_{\left\lfloor {(n + 2)/2} \right\rfloor } 
\end{equation}
and
\begin{equation}\label{v5ozf0f}
\sum_{k = 0}^n {( - 1)^k \binom{{n + 1}}{{k + 1}}\frac{{1 - 2^{k + 1} }}{{\left( {k + 1} \right)^2 }}}  = \sum_{k = 0}^{\left\lfloor {n/2} \right\rfloor } {\frac{{H_{2k + 1}  - 2O_{k + 1} }}{{2k + 1}}}  - \sum_{k = 1}^{\left\lceil {n/2} \right\rceil } {\frac{{H_{2k}  - H_k }}{{2k}}} .
\end{equation}
Identity~\eqref{uet8p9x} immediately gives~\eqref{t7montf} since
\begin{equation}
\sum_{k = 0}^n {( - 1)^k \binom{{n}}{k}\frac{1}{{\left( {k + 1} \right)^3 }}}  = \frac{1}{2}\frac{{H_{n + 1}^2  + H_{n + 1}^{(2)} }}{{n + 1}},
\end{equation}
\begin{equation}
\sum_{k = 1}^n {\frac{{H_k }}{k}} = \frac{1}{2}\left( {H_n^2  + H_n^{(2)} } \right),
\end{equation}
and
\begin{equation}
\sum\limits_{k = 1}^n {\frac{{O_k }}{{2k - 1}}}  = \frac{1}{2}\left( {O_n^2  + O_n^{(2)} } \right),
\end{equation}
identity~\eqref{v5ozf0f} reduces to
\begin{align*}
&\frac{1}{2}\left( {H_{n + 1}^2  + H_{n + 1}^{(2)} } \right) - 2(n + 1)\sum_{k = 0}^n {( - 1)^k \binom{{n}}{k}\frac{{2^k }}{{\left( {k + 1} \right)^3 }}} \\
&\qquad= \sum\limits_{k = 0}^n {( - 1)^k \frac{{H_{k + 1} }}{{k + 1}}}  + \frac{1}{4}\left( {H_{\left\lceil {n/2} \right\rceil }^2  + H_{\left\lceil {n/2} \right\rceil }^{(2)} } \right) - O_{\left\lceil {(n + 1)/2} \right\rceil }^2  - O_{\left\lceil {(n + 1)/2} \right\rceil }^{(2)},
\end{align*}
and hence~\eqref{xr1j6uv}.
\end{proof}

\begin{proposition}
If $n$ is a non-negative integer, then
\begin{align}
\sum_{k = 1}^n {( - 1)^{k - 1} \binom{{n}}{k}\frac{{2^k }}{{k^2 }}}  &= \frac{1}{2}\left( {H_n^2  + H_n^{(2)} } \right) - \frac{1}{4}\left( {H_{\left\lfloor {n/2} \right\rfloor }^2  + H_{\left\lfloor {n/2} \right\rfloor }^{(2)} } \right) + O_{\left\lceil {n/2} \right\rceil }^2  + O_{\left\lceil {n/2} \right\rceil }^{(2)}\nonumber\\
&\qquad  - \sum_{k = 1}^n {( - 1)^{k-1} \frac{{H_k }}{k}} .
\end{align}
\end{proposition}
\begin{proof}
Application of Theorem~\ref{thm.v41e7ah} to~\eqref{xr1j6uv}.
\end{proof}
\begin{proposition}
If $n$ is a non-negative integer, then
\begin{equation}\label{knuth}
\sum_{k = 0}^n {( - 1)^k \binom{{n}}{k}2^{ - k} \binom{{2k}}{k}}
 =  \begin{cases}
 2^{ - n} \binom{{n}}{n/2}, &\text{if $n$ is even;}\\ 
 0,&\text{if $n$ is odd;} \\ 
 \end{cases} 
\end{equation}

\begin{equation}\label{n2x6xs0}
\sum_{k = 0}^n {( - 1)^k \binom{{n}}{k}\frac{{2^k }}{{k + 1}}} 
=\begin{cases}
 \dfrac1{n + 1},&\text{if $n$ is even;}  \\ 
 0,&\text{if $n$ is odd;}  \\ 
 \end{cases} 
\end{equation}
and
\begin{equation}\label{pn5otn6}
\sum_{k = 0}^n {( - 1)^k \binom{{n}}{k}2^{ - k} C_{k + 1} } 
=\begin{cases}
 2^{ - n} C_{n/2},&\text{if $n$ is even;}  \\ 
 0,&\text{if $n$ is odd.} \\ 
 \end{cases}
\end{equation}
\end{proposition}

\begin{proof}
The following identity is known~\cite{adegoke1}:
\begin{equation}\label{hdj69wz}
\sum_{k = 0}^n {( - 1)^k \binom{{n}}{k}2^{ - k} \binom{{2k + v}}{{\left( {2k + v} \right)/2}}\binom{{k + v}}{{v/2}}^{ - 1} }  
=  \begin{cases}
 2^{ - n} \binom{{n}}{{n/2}}\binom{{\left( {n + v} \right)/2}}{{v/2}}^{ - 1},&\text{if $n$ is even;}  \\ 
 0,&\text{if $n$ is odd;} \\ 
 \end{cases} 
\end{equation}
where $v$ is a real number.

Identities~\eqref{knuth},~\eqref{n2x6xs0} and~\eqref{pn5otn6} are respective evaluations of~\eqref{hdj69wz} at $v=0$, $v=1$ and $v=2$. Note that
\begin{align}
\binom{{2k + 1}}{k} = \frac{{2k + 1}}{{k + 1}}\binom{{2k}}{k} &= \left( {2k + 1} \right)C_k ,\label{fwus8x3}\\
\binom{{2k + 1}}{{\left( {2k + 1} \right)/2}} &= \binom{{2k + 1}}{k}^{ - 1} \frac{{2^{2(2k + 1)} }}{{\pi (k + 1)}},\\
\binom{{k + 1}}{{1/2}} = \binom{{2\left( {k + 1} \right)}}{{k + 1}}^{ - 1} \frac{{2^{2(k + 1) + 1} }}{\pi } &= \frac{{2^{2(k + 1) + 1} }}{{2\pi (2k + 1)C_k }},
\end{align}
\begin{equation}\label{b9ic6lb}
\binom{{n/2 + 1/2}}{{1/2}} = \binom{{n/2 + 1/2}}{{n/2}} = \frac{{n + 1}}{{2^n }}\binom{{n}}{{n/2}},
\end{equation}
and
\begin{equation}\label{e58lg2y}
\binom{{2(k + 1)}}{{k + 1}} = 2(2k + 1)C_k .
\end{equation}
\end{proof}

\begin{remark}
Generalized binomial coefficient identities~\eqref{fwus8x3}--\eqref{b9ic6lb} and similar ones can be derived from their definition~\eqref{y89d722} on page~\pageref{y89d722}; many of these are listed toward the end of Gould's book~\cite{gould}.
\end{remark}

\begin{remark}
Identity~\eqref{pn5otn6} was first discovered by Suleiman and Sury~\cite{sulei23}.
\end{remark}

\begin{proposition}
If $n$ is a non-negative integer, then
\begin{equation}\label{ey4kpm3}
\sum_{k = 0}^n {( - 1)^k \binom{{n}}{k}2^{ - k} \binom{{2k}}{k}\left( {H_k  - O_k } \right)}
 = \begin{cases}
 2^{ - n - 1} \binom{{n}}{{n/2}}H_{n/2},&\text{if $n$ is even;}  \\ 
 0,&\text{if $n$ is odd;} \\ 
 \end{cases} 
\end{equation}
\begin{equation}\label{lfr23eo}
\sum_{k = 0}^n {( - 1)^k \binom{{n}}{k}\frac{{2^k H_k }}{{k + 1}}}
=\begin{cases}
 0,&\text{if $n$ is even;} \\ 
  - \dfrac{{2O_{(n + 1)/2} }}{{n + 1}},&\text{if $n$ is odd;} \\ 
 \end{cases} 
\end{equation}
and
\begin{equation}\label{bv4ikdf}
\sum_{k = 0}^n {( - 1)^k \binom{{n}}{k}2^{ - k} C_{k + 1} \left( {O_{k + 1}  - H_{k + 2} } \right)}
=\begin{cases}
  - 2^{ - n - 1} C_{n/2} H_{(n + 2)/2},&\text{if $n$ is even;}  \\ 
 0,&\text{if $n$ is odd.} \\ 
 \end{cases}
\end{equation}

\end{proposition}
\begin{proof}
Differentiate~\eqref{hdj69wz} with respect to $v$ to obtain
\begin{align}\label{b8yv715}
&\sum_{k = 0}^n {( - 1)^k \binom{{n}}{k}2^{ - k} \binom{{2k + v}}{{(2k + v)/2}}\binom{{k + v}}{{v/2}}^{ - 1} \left( {2\left( {H_{2k + v}  - H_{k + v} } \right) + H_{v/2}  - H_{k + v/2} } \right)}\nonumber\\ 
&\qquad =  \begin{cases}
  - 2^{ - n} \binom{{n}}{{n/2}}\binom{{(n + v)/2}}{{v/2}}^{ - 1} \left( {H_{(n + v)/2}  - H_{v/2} } \right) ,&\text{if $n$ is even;}\\ 
 0,&\text{if $n$ is odd.} 
 \end{cases}
\end{align}
Setting $v=0$ in~\eqref{b8yv715} gives~\eqref{ey4kpm3}. To prove~\eqref{lfr23eo}, set $v=1$ in~\eqref{b8yv715} and use identities~\eqref{fwus8x3}--\eqref{b9ic6lb} and the fact that
\begin{equation*}
H_{1/2}  - H_{k + 1/2}  = 2 - 2\ln 2 - (2O_{k + 1}  - 2\ln 2) = 2(1 - O_{k + 1} ),
\end{equation*}
to obtain
\begin{align*}
&\sum_{k = 0}^n {( - 1)^k \binom{{n}}{k}\frac{2^k}{k+1} \left(H_{2k + 1}  - H_{k + 1}  - O_{k + 1} \right)} + \sum_{k = 0}^n {( - 1)^k \binom{{n}}{k}\frac{{2^k }}{{k + 1}}}\\  
&\qquad=  \begin{cases}
 \dfrac{{1 - O_{(n + 2)/2} }}{{n + 1}},&\text{if $n$ is even;} \\ 
 0,&\text{if $n$ is odd;} \\ 
 \end{cases}
\end{align*}
which, on account of~\eqref{n2x6xs0} and the fact that
\begin{equation*}
H_{2k+1}-O_{k+1}-H_{k+1}=\frac12H_k-H_{k+1}=-\frac12H_k-\frac 1{k+1}
\end{equation*}
simplifies to
\begin{equation*}
\sum_{k = 0}^n {( - 1)^k \binom{{n}}{k}\frac{{2^k }}{{k + 1}}\left( { - \frac{{H_k }}{2} - \frac{1}{{k + 1}}} \right)} 
 =\begin{cases}
 \dfrac{{ - O_{(n + 2)/2} }}{{n + 1}},&\text{if $n$ is even;} \\ 
 0,&\text{if $n$ is odd;} \\ 
 \end{cases}
\end{equation*}
that is
\begin{equation*}
\sum_{k = 0}^n {( - 1)^k \binom{{n}}{k}\frac{{2^{k - 1} H_k }}{{k + 1}}}  + \sum_{k = 0}^n {( - 1)^k \binom{{n}}{k}\frac{{2^k }}{{\left( {k + 1} \right)^2 }}} 
=\begin{cases}
 \dfrac{{O_{(n + 2)/2} }}{{n + 1}},&\text{if $n$ is even;} \\ 
 0,&\text{if $n$ is odd;} 
 \end{cases}
\end{equation*}
from which, together with~\eqref{t7montf}, we finally obtain~\eqref{lfr23eo}. Identity~\eqref{bv4ikdf} follows from setting $v=2$ in~\eqref{b8yv715}.
\end{proof}
\begin{remark}
Identity~\eqref{ey4kpm3} corresponds to a simplification of~\cite[Equation (8)]{rathie22}. Identity~\eqref{lfr23eo} was first derived by Rathie and Campbell~\cite{rathie22}.
\end{remark}
\begin{proposition}
If $n$ is a non-negative integer, then
\begin{align}
&\sum_{k = 0}^n {( - 1)^k \binom{{n}}{k}\frac{{2k + 1}}{{k + 1}}\,2^{ - k} C_k \left( {H_{k + 1}  - O_{k + 1} } \right)}\\  
&\qquad=\begin{cases}
 0,&\text{if $n$ is even;} \\ 
  - 2^{ - n - 1} \binom{{n}}{{(n - 1)/2}}\dfrac{H_{(n + 1)/2}}{n+1},&\text{if $n$ is odd;}  \\ 
 \end{cases}
\end{align}
and
\begin{align}
\sum_{k = 0}^n {( - 1)^k \binom{{n}}{k}\frac{{\left( {2k + 1} \right)\left( {2k + 3} \right)}}{{\left( {k + 1} \right)\left( {k + 2} \right)^2 }}\,2^{ - k} C_k \left( {H_{k + 2}  - O_{k + 2} } \right)}\nonumber\\
=\begin{cases}
 2^{ - n - 2} \binom{{n + 1}}{{n/2}}\dfrac{{H_{(n + 2)/2} }}{{\left( {n + 1} \right)\left( {n + 2} \right)}} ,&\text{if $n$ is even;}\\ 
 0,&\text{if $n$ is odd.} \\ 
 \end{cases}
\end{align}
\end{proposition}
\begin{proof}
Application of Theorem~\ref{thm.u5nhiva} to~\eqref{ey4kpm3}. We used~\eqref{e58lg2y} and the Catalan number recurrence relation
\begin{equation*}
C_{n + 1}  = \frac{{2\left( {2n + 1} \right)}}{{n + 2}}\,C_n.
\end{equation*}
\end{proof}
The next results come from the Knuth-Boyadzhiev identity~\cite{boyadzhiev09}:
\begin{equation}\label{kb}
\sum_{k = 0}^n {( - 1)^k \binom{{n}}{k}H_k x^k }  = \left( {1 - x} \right)^n H_n  - \sum_{k = 1}^n {\frac{{\left( {1 - x} \right)^{n - k} }}{k}} 
\end{equation}
and the following identity due to Bat\i r and Sofo~\cite{batir21}:
\begin{equation}\label{batir23}
\sum_{k = 0}^n {( - 1)^k \binom{{n}}{k}\bar H_k x^k }  = \sum_{k = 1}^n {\frac{{\left( {1 - x} \right)^{n - k} }}{k}}  - \sum_{k = 1}^n {\frac{{\left( {1 - x} \right)^{n - k} \left( {1 + x} \right)^k }}{k}} ,
\end{equation}
where
\begin{equation*}
\bar H_n=\sum_{k=1}^n{\frac{(-1)^{k-1}}k} 
\end{equation*}
is the $n^{th}$ alternating harmonic number.

It is known that (see, for example, Frontczak~\cite{frontczak21}):
\begin{equation}\label{sev5sny}
\bar H_n  = 2O_{\left\lceil {n/2} \right\rceil }  - H_n,
\end{equation}
and also (Sofo~\cite{sofo15}):
\begin{equation}\label{vtufsdp}
\bar H_n=H_n-H_{\lfloor n/2\rfloor},
\end{equation}
so that
\begin{equation}\label{qxhpp6g}
2\bar H_n=2O_{\lceil n/2\rceil}-H_{\lfloor n/2\rfloor}.
\end{equation}
From~\eqref{kb} and \eqref{batir23}, using~\eqref{sev5sny},~\eqref{vtufsdp}, and~\eqref{qxhpp6g}, we find
\begin{equation}\label{comp1}
2\sum_{k = 0}^n {( - 1)^k \binom{{n}}{k}O_{\left\lceil {k/2} \right\rceil } x^k }  = \left( {1 - x} \right)^n H_n  - \sum_{k = 1}^n {\frac{{\left( {1 - x} \right)^{n - k} \left( {1 + x} \right)^k }}{k}}
\end{equation}
and
\begin{equation}\label{comp2}
\sum_{k = 0}^n {( - 1)^k \binom{{n}}{k}H_{\left\lfloor {k/2} \right\rfloor } x^k }  = \left( {1 - x} \right)^n H_n  - 2\sum_{k = 1}^n {\frac{{\left( {1 - x} \right)^{n - k} }}{k}}  + \sum_{k = 1}^n {\frac{{\left( {1 - x} \right)^{n - k} \left( {1 + x} \right)^k }}{k}} .
\end{equation}

\begin{proposition}
If $n$ is a non-negative integer, then
\begin{align}
2\sum_{k = 0}^n {( - 1)^k \binom{{n}}{k}O_{\left\lceil {k/2} \right\rceil } }  =  - \frac{{2^n }}{n},\label{kuotslw}\\
2\sum_{k = 0}^n {\binom{{n}}{k}O_{\left\lceil {k/2} \right\rceil } }  = 2^n H_n\label{n6bsu21} ,
\end{align}
and
\begin{equation}\label{efxtjr4}
\sum_{k = 1}^n {( - 1)^k \binom{{n}}{k}2^{k - 1} H_{k - 1} }  
=\begin{cases}
 2\,O_{n/2},&\text{if $n$ is even;}  \\ 
 0,&\text{if $n$ is odd.} 
 \end{cases}
\end{equation}
\end{proposition}
\begin{proof}
Using $x=1$ and $x=-1$ in~\eqref{comp1}, in turn, produces~\eqref{kuotslw} and~\eqref{n6bsu21}. Identity~\eqref{efxtjr4} is the result of adding~\eqref{kuotslw} and~\eqref{n6bsu21}.
\end{proof}

\begin{proposition}
If $n$ is a non-negative integer, then
\begin{equation}\label{watbah3}
\sum_{k = 0}^n {\binom{{n}}{k}\frac{{O_{\left\lceil {(k + 1)/2} \right\rceil } }}{{k + 1}}}  = \frac{{2^n H_{n + 1} }}{{n + 1}},
\end{equation}
and
\begin{equation}\label{zphhcbp}
\sum_{k = 0}^n {( - 1)^k \binom{{n}}{k}\frac{{2^k H_{k + 1} }}{{\left( {k + 1} \right)\left( {k + 2} \right)}}}  
=\begin{cases}
 \dfrac{{O_{(n + 2)/2} }}{{\left( {n + 1} \right)\left( {n + 2} \right)}} ,&\text{if $n$ is even;}\\ 
 0,&\text{if $n$ is odd.} \\ 
 \end{cases}
\end{equation}

\end{proposition}
\begin{proof}
Identity~\eqref{watbah3} is obtained from~\eqref{n6bsu21} by the application of Theorem~\ref{thm.u5nhiva}. Identity~\eqref{zphhcbp} follows from~\eqref{efxtjr4} by the same theorem.
\end{proof}
\begin{remark}
Identity~\eqref{lfr23eo} can also be derived from~\eqref{efxtjr4} by the application of Theorem~\ref{thm.u5nhiva}.
\end{remark}
\begin{proposition}
If $n$ is a non-negative integer, then
\begin{equation}\label{kjbgzo0}
\sum_{k = 0}^n {( - 1)^k \binom{{n}}{k}2^k H_{k + 1} }  = ( - 1)^n \left( {O_{\left\lfloor {(n + 1)/2} \right\rfloor }  + O_{\left\lceil {(n + 1)/2} \right\rceil } } \right)
\end{equation}
and
\begin{equation}\label{hv15wkp}
\sum_{k = 0}^n {( - 1)^k \binom{{n}}{k}k2^k H_k }  = ( - 1)^n 2n\left( {O_{\left\lfloor {n/2} \right\rfloor }  + O_{\left\lceil {n/2} \right\rceil } } \right).
\end{equation}
\end{proposition}

\begin{proof}
The binomial inversion of~\eqref{n6bsu21} gives
\begin{equation}\label{ruulewa}
\sum_{k = 0}^n {( - 1)^k \binom{{n}}{k}2^k H_k }  = ( - 1)^n 2O_{\left\lceil {n/2} \right\rceil }.
\end{equation}
Addition of~\eqref{n2x6xs0} and~\eqref{ruulewa} produces
\begin{align*}
\sum_{k = 0}^n {( - 1)^k \binom{{n}}{k}2^k H_{k + 1} }  
&=  \begin{cases}
 2O_{n/2}  + \dfrac{1}{{n + 1}},&\text{if $n$ is even;} \\ 
  - 2O_{(n + 1)/2},&\text{if $n$ is odd;}  \\ 
 \end{cases}\\
&=  \begin{cases}
 O_{n/2}  + O_{(n + 2)/2},&\text{if $n$ is even;}  \\ 
  - 2O_{(n + 1)/2},&\text{if $n$ is odd;}  \\ 
 \end{cases}
\end{align*}
from which~\eqref{kjbgzo0} follows. Identity~\eqref{hv15wkp} is obtained from~\eqref{kjbgzo0} by application of Theorem~\ref{thm.v41e7ah}.
\end{proof}
\begin{proposition}
If $n$ is a non-negative integer, then
\begin{align}
2\sum_{k = 0}^n {( - 1)^k \binom{{n}}{k}kO_{\left\lfloor {k/2} \right\rfloor } }  = \frac{{n2^{n - 1} }}{{n - 1}},\quad n\ne 1\label{t0ppyv0},\\
2\sum_{k = 0}^n {\binom{{n}}{k}kO_{\left\lfloor {k/2} \right\rfloor } }  = n2^{n - 1} H_{n - 1},\quad n\ne 1\label{pocicgb},
\end{align}
and
\begin{equation}\label{adnwkvc}
\sum_{k = 2}^n {( - 1)^k \binom{{n}}{k}k2^{k - 1} H_{k - 2} }
=\begin{cases}
 0,&\text{if $n$ is even;} \\ 
  - 4nO_{(n - 1)/2},&\text{if $n$ is odd.}  
 \end{cases}
\end{equation}
\end{proposition}
\begin{proof}
Identities~\eqref{t0ppyv0} and~\eqref{pocicgb} are obtained from~\eqref{kuotslw} and~\eqref{n6bsu21} by the application of Theorem~\ref{thm.v41e7ah}. Subtracting~\eqref{t0ppyv0} from~\eqref{pocicgb} gives
\begin{equation*}
\sum_{k = 0}^n {\binom{{n}}{k}kO_{\left\lfloor {k/2} \right\rfloor } \left( {1 - ( - 1)^k } \right)}  = n2^{n - 1} H_{n - 2} ,\quad n\ge 2,
\end{equation*}
from which~\eqref{adnwkvc} follows by binomial inversion.
\end{proof}

\begin{proposition}
If $m$ and $n$ are non-negative integers, then
\begin{align}
2\sum_{k = m}^n {(-1)^k\binom{{n}}{k}\binom{{k}}{m}O_{\left\lceil {k/2} \right\rceil } }  &= \frac{{( - 1)^{m+1}}}{{m + 1}}2^{n - m}\binom nm\sum_{k = 0}^m {( - 1)^k \binom{{k + n}}{n}\binom{{k + n}}{{m + 1}}^{ - 1} },\quad n>m,\label{vi6tqmr} \\
2\sum_{k = m}^n {\binom{{n}}{k}\binom{{k}}{m}O_{\left\lceil {k/2} \right\rceil } }  &= 2^{n - m} \binom{{n}}{m}H_n  + 2^{n - m} \binom{{n}}{m}\sum_{k = 1}^m {\frac{{( - 1)^{k - 1} }}{k}\binom{{m}}{k}\binom{{n}}{k}^{ - 1} }\label{k98363g}, 
\end{align}
and
\begin{equation}\label{pvb13l9}
\sum_{k = 1}^n {\frac{{( - 1)^k }}{k}\binom{{n}}{k}\binom{{m}}{k}^{ - 1} }  = H_m  - 2^{n - m + 1} \binom{{m}}{n}^{ - 1} \sum_{k = 1}^m {\binom{{m}}{k}\binom{{k}}{n}O_{\left\lceil {k/2} \right\rceil } },\quad m\ge n .
\end{equation}
\end{proposition}
In particular,
\begin{equation}
2\sum_{k = 0}^n {( - 1)^k \binom{{n}}{k}kO_{\left\lceil {k/2} \right\rceil } }  =  - 2^{n - 1} \frac{{n - 2}}{{n - 1}}
\end{equation}
and
\begin{equation}
2\sum_{k = 0}^n {\binom{{n}}{k}kO_{\left\lceil {k/2} \right\rceil } }  = 2^{n - 1} \left( {nH_n  + 1} \right).
\end{equation}
\begin{proof}
Differentiating~\eqref{comp1} $m$ times with respect to $x$ gives
\begin{align*}
&2\sum_{k = 0}^n {\binom{{n}}{k}\binom{{k}}{m}O_{\left\lceil {k/2} \right\rceil } x^{k - m} }\\
&\qquad= \left( {1 + x} \right)^{n - m} \binom{{n}}{m}H_n \\
&\qquad\qquad - \sum_{k = 1}^n {\frac{1}{k}\sum_{j = 0}^m {( - 1)^j \binom{{k}}{j}\binom{{n - k}}{{m - j}}\left( {1 - x} \right)^{k - j} \left( {1 + x} \right)^{n - k - m + j} } } 
\end{align*}
Identities~\eqref{vi6tqmr} and~\eqref{k98363g} are obtained by setting $x=-1$ and $x=1$, in turn, in the above expression. We used
\begin{equation*}
\binom{{k - m}}{{n - m}}\binom{{k}}{m} = \binom{{k}}{n}\binom{{n}}{m}.
\end{equation*}
Identity~\eqref{pvb13l9} is a rearrangement of~\eqref{k98363g} with an exchange of $m$ and $n$.
\end{proof}
\begin{proposition}
If $n$ is a non-negative integer, then
\begin{align}
\sum_{k = 0}^n {( - 1)^k \binom{{n}}{k}H_{\left\lfloor {k/2} \right\rfloor } }  &= \frac{{2^n  - 2}}{n}\label{bab1uf0},\\
\sum_{k = 0}^n {\binom{{n}}{k}H_{\left\lfloor {k/2} \right\rfloor } } & = 2^n H_n  - 2^{n + 1} \sum_{k = 1}^n {\frac{1}{{k2^k }}} 
\end{align}
and
\begin{equation}
\sum_{k = 0}^n {(-1)^k\binom{{n}}{k}2^k H_{\left\lfloor {k/2} \right\rfloor } }  = ( - 1)^n 3H_n  - ( - 1)^n 2H_{\left\lfloor {n/2} \right\rfloor }  + \sum_{k = 1}^n {( - 1)^{n - k} \frac{{3^k }}{k}} .
\end{equation}
\end{proposition}
\begin{proof}
Set $x=1$, $x=-1$ and $x=2$, in turn, in~\eqref{comp2}.
\end{proof}
\begin{proposition}
If $n$ is a non-negative integer, then
\begin{equation}
\sum_{k = 0}^n {( - 1)^k \binom{{n}}{k}\frac{{H_{\left\lceil {k/2} \right\rceil } }}{{k + 1}}}  = \frac{{2 - 2^{n + 1} }}{{\left( {n + 1} \right)^2 }}
\end{equation}
and
\begin{equation}
\sum_{k = 0}^n {( - 1)^k \binom{{n}}{k}\frac{{H_{\left\lceil {(k + 1)/2} \right\rceil } }}{{\left( {k + 1} \right)\left( {k + 2} \right)}}}  =  - \frac{{2 - 2^{n + 2} }}{{\left( {n + 1} \right)\left( {n + 2} \right)^2 }}.
\end{equation}
\end{proposition}
\begin{proof}
Application of Theorem~\ref{thm.u5nhiva} to~\eqref{bab1uf0}.
\end{proof}
\begin{proposition}
If $n$ is a non-negative integer, then
\begin{equation}
H_n  = \frac{1}{2}H_{\left\lfloor {n/2} \right\rfloor }  + O_{\left\lceil {n/2} \right\rceil }.
\end{equation}
\end{proposition}
\begin{proof}
Act on~\eqref{bab1uf0} with $\mathcal N_n$; making use of~\eqref{idh7u98} and~\eqref{kuotslw}.
\end{proof}
\begin{proposition}
If $n$ is a non-negative integer, then
\begin{align}
\sum_{k = 1}^n {( - 1)^k \binom{{n}}{k}\frac{{C_{k - 1} }}{{2^{2k} }}}  &= \frac{1}{2} - \frac{{2n + 1}}{{2^{2n + 1} }}\binom{{2n}}{n}\label{bdaobq2},\\
\sum_{k = 0}^n {( - 1)^k \binom{{n}}{k}\frac{{2^{ - 2k} C_k }}{{k + 1}}}  &= \left( {2n + 1} \right)\left( {2n + 3} \right)\frac{{2^{ - 2n} C_n }}{{n + 1}} - \frac{2}{{n + 1}}\label{ali4ba2},\\
\sum_{k = 1}^n {\binom{{n}}{k}\frac{{C_{k - 1} }}{{2^{2k} }}}  &= \frac{1}{2} - \frac{1}{{2^{2n + 1} }}\binom{{2n}}{n} + 2^{ - 2n} \binom{{2n}}{n}\sum_{k = 1}^{\left\lfloor {n/2} \right\rfloor } {2^{2k} C_{k - 1} \binom{{n}}{{2k}}\binom{{4k}}{{2k}}^{ - 1} }\label{nhdk9ch},
\end{align}
and
\begin{align}\label{m8z59d8}
\sum_{k = 0}^n {\binom{{n}}{k}\frac{{C_k }}{{2^{2k}(k+1) }}}  &=\frac{{2\left( {2n + 1} \right)}}{{2^{2n} \left( {n + 1} \right)}}C_n \sum_{k = 1}^{\left\lceil {n/2} \right\rceil } {2^{2k} C_{k - 1} \binom{{n + 1}}{{2k}}\binom{{4k}}{{2k}}^{ - 1} }\nonumber\\
&\qquad+ \frac{2}{{n + 1}} - \frac{{2n + 1}}{{2^{2n} \left( {n + 1} \right)}}C_n .
\end{align}
\end{proposition}

\begin{proof}
Write~\eqref{b0s5n16} as
\begin{equation*}
\sum_{k = 1}^n {\binom{{n}}{k}\binom{{x}}{{k + z}}}  = \binom{{n + x}}{{n + z}} - \binom xz
\end{equation*}
and set $z=0$ and $x=1/2$ to obtain
\begin{equation*}
\sum_{k = 1}^n {\binom{{n}}{k}\binom{{1/2}}{k}}  = \binom{{n + 1/2}}{n} - 1
\end{equation*}
and hence~\eqref{bdaobq2} since
\begin{equation*}
\binom{{1/2}}{s} = \frac{{( - 1)^{s - 1} }}{{2^{2s - 1} }}C_{s - 1} 
\end{equation*}
and
\begin{equation*}
\binom{{s + 1/2}}{s} = \frac{{2s + 1}}{{2^{2s} }}\binom{{2s}}{s}.
\end{equation*}
Setting $y=n$ and $x=1/2$ in the following identity~\cite[Equation (7.14), p.35]{quaintance10}:
\begin{equation*}
\sum_{k = 0}^n {( - 1)^k \binom{{x}}{k}\binom{{y}}{{n - k}}}  = \sum_{k = 0}^{\left\lfloor {n/2} \right\rfloor } {( - 1)^k \binom{x}{k}\binom{{y - x}}{{n - 2k}}} ,
\end{equation*}
where $x$ and $y$ are complex numbers, gives
\begin{equation*}
\sum_{k = 1}^n {\binom{{n}}{k}\frac{{C_{k - 1} }}{{2^{2k} }}}  = \frac{1}{2} - \frac{1}{2}\binom{{n - 1/2}}{n} + \sum_{k = 1}^{\left\lfloor {n/2} \right\rfloor } {\frac{{C_{k - 1} }}{{2^{2k} }}\binom{{n - 1/2}}{{n - 2k}}} ,
\end{equation*}
and hence~\eqref{nhdk9ch} since
\begin{equation*}
\binom{{s - 1/2}}{s} = 2^{ - 2s} \binom{{2s}}{s}
\end{equation*}
and
\begin{equation*}
\binom{{r - 1/2}}{{r - 2s}} = 2^{4s - 2r} \binom{{2r}}{r}\binom{{r}}{{2s}}\binom{{4s}}{{2s}}^{ - 1} .
\end{equation*}
Identities~\eqref{ali4ba2} and~\eqref{m8z59d8} follow from~\eqref{bdaobq2} and~\eqref{nhdk9ch} by application of Theorem~\ref{thm.u5nhiva}.
\end{proof}

\begin{proposition}
If $n$ is a non-negative integer, then
\begin{align}
\sum_{k = 0}^n {( - 1)^k \binom{{n}}{k}\frac{{C_k }}{{2^{2k} }}}  &= \frac{{2n + 1}}{{2^{2n} }}C_n ,\label{jxih6gu}\\
\sum_{k = 0}^n {( - 1)^k \binom{{n}}{k}\frac{{kC_k }}{{2^{2k} }}}  &= -\frac{{nC_n }}{{2^{2n} }},\label{por6dir}\\
\sum_{k = 0}^n {( - 1)^k \binom{{n}}{k}\frac{{C_{k + 1} }}{{2^{2k} }}}  &= \frac{{C_{n + 1} }}{{2^{2n} }},\label{k5ply5k}
\end{align}
and
\begin{equation}\label{kkkr7kf}
\sum_{k = 0}^n {( - 1)^k \binom{{n}}{k}2^{ - 2k} \binom{{2k}}{k}}  = 2^{ - 2n} \binom{{2n}}{n}.
\end{equation}
\end{proposition}

\begin{proof}
Set $x=1/2$ and $z=1$ in~\eqref{b0s5n16} to obtain~\eqref{jxih6gu} since
\begin{equation*}
\binom{{1/2}}{{s + 1}} = \frac{{( - 1)^s }}{{2^{2s + 1} }}C_s 
\end{equation*}
and
\begin{equation*}
\binom{{s + 1/2}}{{s + 1}} = \frac{{2s + 1}}{{2^{2s + 1} }}C_s .
\end{equation*}
Identities~\eqref{por6dir} and~\eqref{kkkr7kf} follow from~\eqref{jxih6gu} by Theorem~\ref{thm.hdq3q7r}. Identity~\eqref{k5ply5k} is obtained from~\eqref{por6dir} by Theorem~\ref{thm.u5nhiva}.
\end{proof}

\begin{remark}
Identity~\eqref{k5ply5k} was also reported in~\cite{donaghey76}.
\end{remark}
\begin{lemma}
If $x$ and $z$ are complex numbers that are not negative integers and $n$ is a non-negative integer, then
\begin{equation}\label{eeutcar}
\sum_{k = 0}^n {\binom{{n}}{k}\binom{{x}}{{k + z}}H_{x - k - z} }  = \binom{{n + x}}{{n + z}}\left( {H_x  - H_{n + x}  + H_{x - z} } \right)
\end{equation}
and
\begin{equation}\label{ulehqvz}
\sum_{k = 0}^n {\binom{{n}}{k}\binom{{x}}{{k + z}}H_{k + z} }  = \binom{{n + x}}{{n + z}}\left( {H_x  - H_{n + x}  + H_{n + z} } \right).
\end{equation}
\end{lemma}
\begin{proof}
Differentiate~\eqref{b0s5n16} with respect to $x$ to obtain
\begin{equation}\label{jnuo5az}
\sum_{k = 0}^n {\binom{{n}}{k}\binom{{x}}{{k + z}}\left( {H_x  - H_{x - k - z} } \right)}  = \binom{{n + x}}{{n + z}}\left( {H_{n + x}  - H_{x - z} } \right)
\end{equation}
and hence~\eqref{eeutcar} after using~\eqref{b0s5n16} again. Differentiating~\eqref{b0s5n16} with respect to $z$ gives
\begin{equation*}
\sum_{k = 0}^n {\binom{{n}}{k}\binom{{x}}{{k + z}}\left( {H_{x - k - z}  - H_{k + z} } \right)}  = \binom{{n + x}}{{n + z}}\left( {H_{x - z}  - H_{n + z} } \right),
\end{equation*}
which in conjunction with~\eqref{eeutcar} gives~\eqref{ulehqvz}.
\end{proof}

\begin{proposition}
If $n$ is a non-negative integer, then
\begin{equation}\label{tmw4arp}
\sum_{k = 0}^n {( - 1)^k \binom{{n}}{k}2^{ - 2k} \binom{{2k}}{k}O_k }  =  - 2^{ - 2n} \binom{{2n}}{n}O_n 
\end{equation}
and
\begin{equation}\label{i97oolq}
\sum_{k = 0}^n {( - 1)^k \binom{{n}}{k}2^{ - 2k} \frac{{\left( {2k + 1} \right)}}{{k + 1}}C_k O_{k + 1} }  = 2^{ - 2n} \frac{{\left( {2n + 1} \right)}}{{n + 1}}C_n O_{n + 1} .
\end{equation}
\end{proposition}

\begin{proof}
Set $x=-1/2$ and $z=0$ in~\eqref{jnuo5az} to obtain~\eqref{tmw4arp}. Identity~\eqref{i97oolq} is obtained from~\eqref{tmw4arp} by applying Theorem~\ref{thm.u5nhiva}.
\end{proof}
\begin{remark}
Identity~\eqref{tmw4arp} was also derived in~\cite{rathie22}.
\end{remark}
\begin{proposition}
If $n$ is a non-negative integer, then
\begin{align}
\sum_{k = 0}^n {( - 1)^k \binom{{n}}{k}\frac{{C_k O_k }}{{2^{2k} }}}  = 2n\frac{{C_n }}{{2^{2n} }} - \left( {2n + 1} \right)\frac{{C_n O_n }}{{2^{2n} }},\label{p975h0e}\\
\sum_{k = 0}^n {( - 1)^k \binom{{n}}{k}\frac{{kC_k O_k }}{{2^{2k} }}}  = \frac{{nC_n }}{{2^{2n} }}\left( {O_n  - 2} \right),\label{zxuhe9e}\\
\sum_{k = 0}^n {( - 1)^k \binom{{n}}{k}\frac{{C_{k + 1} O_{k + 1} }}{{2^{2k} \left( {k + 1} \right)}}}  = \left( {2n + 3} \right)\frac{{C_{n + 1} O_{n + 1} }}{{2^{2n} \left( {n + 1} \right)}} - \frac{{2C_{n + 1} }}{{2^{2n} }}\label{k7apiin},
\end{align}
and
\begin{equation}\label{blzgasq}
\sum_{k = 0}^n {( - 1)^k \binom{{n}}{k}\frac{{C_{k + 1} O_{k + 1} }}{{2^{2k} }}}  = \frac{{C_{n + 1} }}{{2^{2n} }}\left( {2 - O_{n + 1} } \right).
\end{equation}
\end{proposition}

\begin{proof}
Set $z=1$ and $x=1/2$ in~\eqref{jnuo5az} to obtain~\eqref{p975h0e} after some straightforward manipulation. Identity~\eqref{zxuhe9e} follows from~\eqref{p975h0e} by the application of Theorem~\ref{thm.xl9sjqb}. Identities~\eqref{k7apiin} and~\eqref{blzgasq} come respectively from~\eqref{p975h0e} and~\eqref{zxuhe9e} by Theorem~\ref{thm.u5nhiva}.
\end{proof}

\begin{proposition}
If $m$ and $n$ are positive integers such that $m\ge n$, then
\begin{equation}
\sum_{k = 0}^n {\binom{{n}}{k}\binom{{m}}{k}\binom{{2\left( {m - k} \right)}}{{m - k}}^{ - 1} \binom{{2k}}{k}^{ - 1} }  = \frac{{m }}{{m + n}}2^{2n}\binom{{m + n}}{n}\binom{{2m}}{m}^{ - 1} \binom{{2n}}{n}^{ - 1} 
\end{equation}
and
\begin{align}
&\sum_{k = 0}^n {\binom{{n}}{k}\binom{{m}}{k}\binom{{2\left( {m - k} \right)}}{{m - k}}^{ - 1} \binom{{2k}}{k}^{ - 1} O_k }\nonumber\\
&\qquad  = \frac{m}{{m + n}}2^{2n - 1} \binom{{m + n}}{n}\binom{{2m}}{m}^{ - 1} \binom{{2n}}{n}^{ - 1} \left( {H_{m - 1}  - H_{n + m - 1}  + 2O_n } \right).
\end{align}
\end{proposition}

\begin{proof}
Set $x=m-1$ and $z=-1/2$ in~\eqref{ulehqvz} and equate the respective rational and irrational parts of the resulting identity.
\end{proof}

\begin{proposition}
If $n$ is a non-negative integer, then
\begin{equation}\label{y633vtc}
\sum_{k = 0}^n {( - 1)^k \binom{{n}}{k}\frac{{C_k H_{k + 1} }}{{2^{2k} }}}  = \frac{{2n + 1}}{{2^{2n} }}C_n \left( {H_{n + 1}  - 2O_{n + 1}  + 2} \right)
\end{equation}
and
\begin{equation}\label{lkvngwd}
\sum_{k = 0}^n {( - 1)^k \binom{{n}}{k}\left( {2k + 1} \right)\frac{{C_k H_{k + 1} }}{{2^{2k} }}}  = \frac{1}{{2^{2n} }}C_n \left( {H_{n + 1}  - 2O_n } \right).
\end{equation}
\end{proposition}
\begin{proof}
Using $x=1/2$ and $z=1$ in~\eqref{ulehqvz} gives~\eqref{y633vtc} while $x=-1/2$ and $z=1$ yields~\eqref{lkvngwd}.
\end{proof}

\begin{proposition}
If $n$ is a non-negative iinteger, then
\begin{equation}\label{y5i4f4n}
\sum_{k = 0}^n {( - 1)^k \binom{{n}}{k}\frac{{2^{2k} }}{{C_k }}}  = \frac{3}{{\left( {2n - 1} \right)\left( {2n - 3} \right)}}
\end{equation}
and
\begin{equation}\label{gbhdfrg}
\sum_{k = 0}^n {( - 1)^k \binom{{n}}{k}\frac{{2^{2k} }}{{\left( {2k + 1} \right)C_k }}}  =  - \frac{1}{{\left( {2n + 1} \right)\left( {2n - 1} \right)}}.
\end{equation}
\end{proposition}

\begin{proof}
Consider the following identity~\cite{adegoke1}:
\begin{equation}\label{qtctp70}
\sum_{k = 0}^n {( - 1)^k \binom{{n}}{k}\binom{{k + u + v + 1}}{{u + 1}}^{ - 1} }  = \frac{{u + 1}}{{v + 1}}\binom{{n + u + v + 1}}{{v + 1}}^{ - 1} .
\end{equation}
Setting $v=1$ and $u=-5/2$ gives~\eqref{y5i4f4n} while $v=1$, $u=-3/2$ yields~\eqref{gbhdfrg} since
\begin{equation*}
\binom{{k - 1/2}}{{k + 1}} =  - \frac{{C_k }}{{2^{2k + 1} }}
\end{equation*}
and
\begin{equation*}
\binom{{k + 1/2}}{{k + 1}} = \frac{{2k + 1}}{{2^{2k + 1} }}C_k .
\end{equation*}
\end{proof}

\begin{proposition}
If $n$ is a non-negative integer, then
\begin{align}
\sum_{k = 0}^n {( - 1)^k \binom{{n}}{k}2^{2k} \binom{{2k}}{k}^{ - 1} }  &=  - \frac{1}{{2n - 1}},\label{pfkhcjx}\\
\sum_{k = 0}^n {( - 1)^k \binom{{n}}{k}\frac{{2^{2k} }}{{2k + 1}}\binom{{2k}}{k}^{ - 1} }  &= \frac{1}{{2n + 1}}\label{t3p282d},
\end{align}
and
\begin{equation}\label{u81fnua}
\sum_{k = 0}^n {( - 1)^k \binom{{n}}{k}\frac{1}{{k + 1}}}  = \frac{1}{{n + 1}}.
\end{equation}

\end{proposition}
\begin{proof}
Set $v=0$ and write $u-1$ for $u$ in~\eqref{qtctp70} to get
\begin{equation}\label{cbfpx5e}
\sum_{k = 0}^n {( - 1)^k \binom{{n}}{k}\binom{{k + u}}{k}^{ - 1} }  = \frac{u}{{n + u}}.
\end{equation}
Evaluate~\eqref{cbfpx5e} at $u=-1/2$,~$1/2$,~$1$ to obain~\eqref{pfkhcjx},~\eqref{t3p282d} and~\eqref{u81fnua}.
\end{proof}
\begin{lemma}
If $u$ and $v$ are complex numbers that are not negative integers, then
\begin{align}\label{em7996p}
\sum_{k = 0}^n {( - 1)^k \binom{{n}}{k}\binom{{k + v}}{u}^{ - 1} H_{k + v} }  &= \frac{u}{{v - u + 1}}\binom{{v + n}}{{v - u + 1}}^{ - 1} \left( {H_u  - H_{n + u - 1}  + H_{n + v} } \right)\nonumber\\
&\qquad - \frac{1}{{v - u + 1}}\,\binom{{v + n}}{{v - u + 1}}^{ - 1} 
\end{align}
and
\begin{align}\label{x7ia91s}
\sum_{k = 0}^n {( - 1)^k \binom{{n}}{k}\binom{{k + u + v}}{u}^{ - 1} } H_{k+v} &= \frac{u}{{v + 1}}\binom{{n + u + v}}{{v + 1}}^{ - 1} \left( {H_u  + H_{v + 1}  - H_{n + u - 1} } \right)\nonumber\\
&\qquad - \frac{{u + v + 1}}{{\left( {v + 1} \right)^2 }}\,\binom{{n + u + v}}{{v + 1}}^{ - 1} .
\end{align}

\end{lemma}

\begin{proof}
Differentiate~\eqref{qtctp70} with respect to $u$ and use~\eqref{qtctp70} again to obtain
\begin{align}\label{knccfjh}
&\sum_{k = 0}^n {( - 1)^k \binom{{n}}{k}\binom{{k + u + v + 1}}{{u + 1}}^{ - 1} H_{k + u + v + 1} }\nonumber \\
&\qquad= \frac{{u + 1}}{{v + 1}}\binom{{n + u + v + 1}}{{v + 1}}^{ - 1} \left( {H_{u + 1}  - H_{n + u}  + H_{n + u + v + 1} } \right)\nonumber\\
&\qquad\qquad - \frac{1}{{v + 1}}\binom{{n + u + v + 1}}{{v + 1}}^{ - 1}  .
\end{align}
Now replace $u$ with $u-1$ and $v$ with $v-u$; this gives~\eqref{em7996p}. Next differentiate~\eqref{qtctp70} to obtain
\begin{align}\label{f3uy3vj}
&\sum_{k = 0}^n {( - 1)^k \binom{{n}}{k}\binom{{k + u + v + 1}}{{u + 1}}^{ - 1} \left( {H_{k + u + v + 1}  - H_{k + v} } \right)}\nonumber\\
&\qquad  = \frac{{u + 1}}{{\left( {v + 1} \right)^2 }}\binom{{n + u + v + 1}}{{v + 1}}^{ - 1}  + \frac{{u + 1}}{{v + 1}}\binom{{n + u + v + 1}}{{v + 1}}^{ - 1} \left( {H_{n + u + v + 1}  - H_{v + 1} } \right).
\end{align}
Identity~\eqref{x7ia91s} now follows from~\eqref{knccfjh} and~\eqref{f3uy3vj} after replacing $u$ with $u-1$.
\end{proof}

\begin{proposition}
If $n$ is a non-negative integer, then
\begin{align}
\sum_{k = 0}^n {( - 1)^k \binom{{n}}{k}2^{ - 2k} \binom{{2k}}{k}H_k }  &= 2^{ - 2n} \binom{{2n}}{n}\left( {H_n  - 2O_n } \right)\label{vrl8z12},\\
\sum_{k = 0}^n {( - 1)^k \binom{{n}}{k}2^{ - 2k} \binom{{2k}}{k}\left( {H_k  - O_k } \right)}  &= 2^{ - 2n} \binom{{2n}}{n}\left( {H_n  - O_n } \right)\label{mdn83mz},
\end{align}
and
\begin{equation}\label{pqgy1pp}
\sum_{k = 0}^n {( - 1)^k \binom{{n}}{k}2^{ - 2k} \frac{{2k + 1}}{{k + 1}}C_k \left( {H_{k + 1}  - O_{k + 1} } \right)}  =  - 2^{ - 2n} \frac{{2n + 1}}{{n + 1}}C_n \left( {H_{n + 1}  - O_{n + 1} } \right).
\end{equation}
\end{proposition}

\begin{proof}
Set $v=0$, $u=1/2$ in~\eqref{em7996p} to obtain~\eqref{vrl8z12}. Identity~\eqref{mdn83mz} follows from~\eqref{vrl8z12} by applying Theorem~\ref{thm.hdq3q7r}. Identity~\eqref{pqgy1pp} is obtained from~\eqref{mdn83mz} through the application of Theorem~\ref{thm.u5nhiva}.
\end{proof}

\begin{proposition}
If $n$ is a non-negative integer, then
\begin{equation}\label{al1u5c8}
\sum_{k = 0}^n {( - 1)^k \binom{{n}}{k}2^{2k} \frac{{O_k }}{{C_k }}}  = \frac{{4n\left( {4n - 5} \right)}}{{\left( {2n - 1} \right)^2 \left( {2n - 3} \right)^2 }}
\end{equation}
and
\begin{equation}\label{vje1o4r}
\sum_{k = 0}^n {( - 1)^k \binom{{n}}{k}\frac{{2^{2k} }}{{k + 1}}\frac{{O_{k + 1} }}{{C_{k + 1} }}}  =  - \frac{{\left( {4n - 1} \right)}}{{\left( {2n + 1} \right)^2 \left( {2n - 1} \right)^2 }}.
\end{equation}
\end{proposition}
\begin{proof}
Set $v=-1/2$, $u=-3/2$ in~\eqref{em7996p} to obtain~\eqref{al1u5c8}, from which identity~\eqref{vje1o4r} then follows by Theorem~\ref{thm.u5nhiva}.
\end{proof}

\begin{proposition}
If $n$ is a non-negative integer, then
\begin{equation}\label{bwaf516}
\sum_{k = 0}^n {( - 1)^k \binom{{n}}{k}2^{2k} \binom{{2k}}{k}^{ - 1} H_k }  = \frac{{2\left( {O_{n - 1}  - 1} \right)}}{{2n - 1}}
\end{equation}
and
\begin{equation}\label{nnbxet1}
\sum_{k = 0}^n {( - 1)^k \binom{{n}}{k}2^{2k} \binom{{2k}}{k}^{ - 1} \frac{{H_{k + 1} }}{{2k + 1}}}  =  - \frac{{\left( {O_n  - 1} \right)}}{{\left( {n + 1} \right)\left( {2n + 1} \right)}}.
\end{equation}
\end{proposition}
\begin{proof}
Put $v=0$, $u=-1/2$ in~\eqref{x7ia91s}; this gives~\eqref{bwaf516} from which~\eqref{nnbxet1} follows by Theorem~\ref{thm.u5nhiva}.
\end{proof}

\begin{proposition}
If $n$ is a non-negative integer and $u$ is a complex number, then
\begin{equation}
\sum_{k = 0}^n {( - 1)^k \binom{{n}}{k}\binom{{2k}}{k}\binom{{2\left( {k + u} \right)}}{{k + u}}^{ - 1} \binom{{k + u}}{u}^{ - 1} }  = \frac{u\,2^{2n}}{{n + u}} \binom{{2\left( {n + u} \right)}}{{n + u}}^{-1}
\end{equation}
and
\begin{align}
&\sum_{k = 0}^n {( - 1)^k \binom{{n}}{k}\binom{{2k}}{k}\binom{{2\left( {k + u} \right)}}{{k + u}}^{ - 1} \binom{{k + u}}{u}^{ - 1} O_k }\nonumber\\
&\qquad  = \frac{{u\,2^{2n - 1} }}{{n + u}}\binom{{2\left( {n + u} \right)}}{{n + u}}^{ - 1} \left( {H_u  - H_{n + u - 1} } \right) - \frac{{2^{2n - 1} }}{{n + u}}\binom{{2\left( {n + u} \right)}}{{n + u}}^{ - 1} .
\end{align}
\end{proposition}

\begin{proof}
Set $v=-1/2$ in~\eqref{x7ia91s} and use
\begin{equation*}
\binom{{k + u - 1/2}}{u} = \binom{{2\left( {k + u} \right)}}{{k + u}}\binom{{k + u}}{u}2^{ - 2u} \binom{{2k}}{k}^{ - 1} .
\end{equation*}
Now equate respective rational and irrational parts in the resulting expression.
\end{proof}

\begin{proposition}
If $n$ is a non-negative integer, then
\begin{equation}\label{fh1t4t8}
\sum_{k = 0}^n {( - 1)^k \binom{{n}}{k}2^{2k} \frac{{H_k }}{{C_k }}}  = \frac{{2n + 8 - 6O_n }}{{\left( {2n - 1} \right)\left( {2n - 3} \right)}} + \frac{{24\left( {n - 1} \right)}}{{\left( {2n - 1} \right)^2 \left( {2n - 3} \right)^2 }}
\end{equation}
and
\begin{equation}\label{qt6wbom}
\sum_{k = 0}^n {( - 1)^k \binom{{n}}{k}\frac{{2^{2k} }}{{k + 1}}\frac{{H_{k + 1} }}{{C_{k + 1} }}}  = \frac{{3O_{n + 1}  - n - 5}}{{2\left(n+1\right)\left( {2n - 1} \right)\left( {2n + 1} \right)}} - \frac{{6n}}{{\left( {n + 1} \right)\left( {2n - 1} \right)^2 \left( {2n + 1} \right)^2 }}.
\end{equation}
\end{proposition}
\begin{proof}
Set $v=1$ and $u=-3/2$ in~\eqref{x7ia91s} to obtain~\eqref{fh1t4t8} from which~\eqref{qt6wbom} follows by Theorem~\ref{thm.u5nhiva}.
\end{proof}

\begin{proposition}
If $n$ is a non-negative integer and $r$ is a complex number that is not a negative integer, then
\begin{equation}\label{ww5fa8g}
\sum_{k = 0}^n {( - 1)^k \binom{{n}}{k}\binom{{k}}{{k + r}}}  
= \begin{cases}
 \dfrac{{\sin (\pi r)}}{{\pi \left( {r + n} \right)}},&\text{if $r\ne 0$;} \\ 
 0,&\text{if $r= 0$, $n\ne 0$;}  
 \end{cases}
\end{equation}
and
\begin{equation}\label{mdaqu57}
\sum_{k = 0}^n {( - 1)^k \binom{{n}}{k}\binom{{k}}{{k + r}}H_{k + r} }  
= \begin{cases}
 \dfrac{{\sin (\pi r)}}{{\pi \left( {r + n} \right)}}\left( {H_{r - 1}  + \dfrac{1}{{r + n}}} \right),&\text{if $r\ne 0$;} \\ 
 -1/n,&\text{if $r=0$, $n\ne 0$.}  
 \end{cases}
\end{equation}
\end{proposition}
\begin{proof}
Write the identity~\cite[Entry 1.5]{gould}:
\begin{equation}\label{lvdleh1}
\sum_{k = 0}^n {( - 1)^k \binom{{p}}{k}}  = ( - 1)^n \binom{{p - 1}}{n}
\end{equation}
as
\begin{equation*}
\sum_{k = 0}^n {( - 1)^k \binom{{p}}{{n - k}}}  = \binom{{p - 1}}{n}
\end{equation*}
and set $p=r+n$ to obtain
\begin{equation}
\sum_{k = 0}^n {( - 1)^k \binom{{r + n}}{{r + k}}}  = \binom{{r + n - 1}}{n},
\end{equation}
from which identity~\eqref{ww5fa8g} then follows upon using the identity
\begin{equation*}
\binom{{n+r}}{{k+r}}\binom{{n}}{{n + r}} = \binom{{n}}{k}\binom{{k}}{{k + r}},
\end{equation*}
and the fact that
\begin{equation*}
\binom{{a}}{b}\binom{{b}}{a} 
=\begin{cases}
 \dfrac{{\sin (\pi \left( {b - a} \right))}}{{\pi \left( {b - a} \right)}},&\text{if $b\ne a$;} \\ 
 1,&\text{if $b=a$.} \\ 
 \end{cases}
\end{equation*}
Identity~\eqref{mdaqu57} is obtained by differentiating~\eqref{ww5fa8g} with respect to $r$.
\end{proof}

\begin{proposition}\label{prop.ij2foq8}
If $n$ is a positive integer, then
\begin{align}
\sum_{k = 0}^n {( - 1)^k \binom{{n}}{k}H_k^2 }  &= \frac{{H_n }}{n} - \frac{2}{{n^2 }}\label{w9y5cwi},\\
\sum_{k = 0}^n {( - 1)^k \binom{{n}}{k}O_k }  &=  - \frac{{2^{2n - 1} }}{n}\binom{{2n}}{n}^{ - 1}\label{tj0vcug},
\end{align}
and
\begin{equation}
\sum_{k = 0}^n {( - 1)^k \binom{{n}}{k}O_k^2 }  = \frac{{2^{2n - 1} }}{n}\binom{{2n}}{n}^{ - 1} \left( {H_{n - 1}  - O_n } \right)\label{pj6sue2} .
\end{equation}

\end{proposition}

\begin{proof}
Setting $\ell=0$ in the identity~\cite{sofo14}:
\begin{equation}\label{aqvpl55}
\sum_{k = 0}^n {( - 1)^k \binom nkH_{k + \ell}^2 }  = \frac1n\binom{n + \ell}\ell^{ - 1} \left( {2H_{n - 1}  - H_{n + l}  - H_l } \right),\quad \ell\in\mathbb C\setminus\mathbb Z^{-},
\end{equation}
gives~\eqref{w9y5cwi}. Identities~\eqref{tj0vcug} and~\eqref{pj6sue2} are obtained by setting $\ell=-1/2$ in~\eqref{aqvpl55} and equating respective rational and irrational parts in the resulting expression.
\end{proof}
\begin{proposition}
If $n$ is a non-negative integer, then
\begin{align}
\sum_{k = 0}^n {( - 1)^k \binom{{n}}{k}\frac{{H_{k + 1}^2 }}{{k + 1}}}  &=  - \frac{{H_{n + 1} }}{{\left( {n + 1} \right)^2 }} + \frac{2}{{\left( {n + 1} \right)^3 }},\\
\sum_{k = 0}^n {( - 1)^k \binom{{n}}{k}\frac{{O_{k + 1} }}{{k + 1}}}  &= \frac{{2^{2n} }}{{\left( {n + 1} \right)\left( {2n + 1} \right)}}\binom{{2n}}{n}^{ - 1} ,
\end{align}
and
\begin{equation}
\sum_{k = 0}^n {( - 1)^k \binom{{n}}{k}\frac{{O_{k + 1}^2 }}{{k + 1}}}  =  - \frac{{2^{2n} }}{{\left( {n + 1} \right)\left( {2n + 1} \right)}}\binom{{2n}}{n}^{ - 1} \left( {H_n  - O_{n + 1} } \right).
\end{equation}
\end{proposition}
\begin{proof}
These identities follow from those in Proposition~\ref{prop.ij2foq8} by the application of Theorem~\ref{thm.u5nhiva}.
\end{proof}
\begin{proposition}
If $n$ is a non-negative integer and $m$ and $p$ are complex numbers, then
\begin{equation}\label{w7spap8}
\sum_{k = 0}^n {\binom{{n}}{k}\binom{{p}}{{k + m}}^{ - 1} }  = \frac{{p + 1}}{{p + 1 - n}}\binom{{p - n}}{m}^{ - 1} 
\end{equation}
and
\begin{align}\label{bxekmi9}
\sum_{k = 0}^n {\binom{{n}}{k}\binom{{p}}{{k + m}}^{ - 1} H_{k + m} }  &= \frac{{p + 1}}{{p + 1 - n}}\binom{{p - n}}{m}^{ - 1} \left( {H_p  - H_{p - n}  + H_m } \right)\nonumber\\
&\qquad - \frac{n}{{\left( {p +1- n} \right)^2 }}\binom{{p - n}}{m}^{ - 1} .
\end{align}
\end{proposition}
\begin{proof}
Identity~\eqref{w7spap8} is the same as~\eqref{foz9j01} on page~\pageref{foz9j01}, with $m$ and $p$ interchanged. From~\eqref{cd4s9t9} and~\eqref{mvuo316} we find
\begin{align*}
\sum_{k = 0}^n {\binom{{n}}{k}\binom{{m}}{{p + n - k}}^{ - 1} H_{m - p - n + k} }  &= \frac{{m + 1}}{{m + 1 - n}}\binom{{m - n}}{p}^{ - 1} \left( {H_m  - H_{m - n}  + H_{m - n - p} } \right)\\
&\qquad - \frac{n}{{\left( {m - n + 1} \right)^2 }}\binom{{m - n}}{p}^{ - 1} ,
\end{align*}
from which~\eqref{bxekmi9} follows after first replacing $m$ with $m+p+n$, and then replacing $p$ with $p-m-n$.
\end{proof}
\begin{proposition}
If $m$ is a non-negative integer and $n$ is a positive integer, then
\begin{align}
\sum_{k = 0}^n {( - 1)^k \binom{{n}}{k}B_m (k + 1)}  &= ( - 1)^n (n - 1)!m\braces{{ m}}{n}\label{czm1sgx},\\
\sum_{k = 0}^n {( - 1)^k \binom{{n}}{k}\frac{{B_m (k + 1)}}{{k + 1}}}  &= \frac{{B_m }}{{n + 1}} + \frac{{( - 1)^n n!}}{{n + 1}}m\braces{{ m - 1}}{n}\label{fsl6xqx},
\end{align}
and
\begin{equation}\label{kbx6493}
\sum_{k = 0}^n {( - 1)^k \binom{{n}}{k}\frac{{B_m (k + 2)}}{{\left( {k + 1} \right)\left( {k + 2} \right)}}}  = \frac{{B_m }}{{n + 2}} + \frac{{( - 1)^n n!}}{{n + 2}}m\braces{{ m - 1}}{{n + 1}}.
\end{equation}
\end{proposition}

\begin{proof}
By~\eqref{ib2djg2} we identify the binomial-transform pair $(t_k)$ and $(\tau_k)$, $k=0,1,2,\ldots$, where
\begin{equation}\label{ykzec1x}
t_k=k^m\text{ and }\tau_k=(-1)^kk!\braces mk.
\end{equation}
Now, consider the identity~\cite{apostol08}:
\begin{equation}
\sum_{j = 1}^k {j^m }  = \frac{{B_{m + 1} (k + 1) - B_{m + 1} }}{{m + 1}}.
\end{equation}
By Theorem~\ref{thm.ov7pa39}, we recognize the binomial-transform pair of the first kind $(a_k)$ and $(\alpha_k)$, $k=0,1,2,\ldots$, where
\begin{equation}
a_k  = \sum_{j = 1}^k {j^m }  = \frac{{B_{m + 1} (k + 1) - B_{m + 1} }}{{m + 1}},
\end{equation}
and
\begin{align}
\alpha _k  = \tau _k  - \tau _{k - 1} & = ( - 1)^k k!\braces{{ m}}{k} - ( - 1)^{k - 1} (k - 1)!\braces{{ m}}{{k - 1}}\nonumber\\
 &= ( - 1)^k (k - 1)!\left( {k\braces{{ m}}{k} + \braces{{ m}}{{k - 1}}} \right)\nonumber\\
 &= ( - 1)^k (k - 1)!\braces{{ m + 1}}{k},\quad\text{by~\eqref{qkejzsh}}.
\end{align}
Thus
\begin{equation}
\sum_{k = 0}^n {( - 1)^k \binom{{n}}{k}\frac{{B_{m + 1} (k + 1) - B_{m + 1} }}{{m + 1}}}  = ( - 1)^n (n - 1)!\braces{{ m + 1}}{n},
\end{equation}
and hence~\eqref{czm1sgx}. By Theorem~\ref{thm.ph6iklv} we have that
\begin{equation*}
\frac{{\sum_{j = 0}^k {t_j } }}{{k + 1}}\text{ and }\frac{{\tau _k }}{{k + 1}}
\end{equation*}
are a binomial-transform pair of the first kind, with $(t_k)$ and $(\tau_k)$ given in~\eqref{ykzec1x} and hence~\eqref{fsl6xqx}. Identity~\eqref{kbx6493} follows from~\eqref{fsl6xqx} by the application of Theorem~\ref{thm.u5nhiva}.
\end{proof}

\begin{proposition}
If $n$ is a non-negative integer, then
\begin{align}
\sum_{k = 0}^n {\binom{{n}}{k}\frac{kB_{k + 1} }{{k + 1}}}  &= \frac{1}{{n + 1}} - (-1)^nB_n  + \frac{nB_{n+1}}{{n + 1}}\label{sgj101q},\\
\sum_{k = 0}^n {( - 1)^k \binom{{n}}{k}\frac{{kB_{k + 1} }}{{k + 1}}}  &=  - \frac{1}{{n + 1}} + ( - 1)^n B_n  + ( - 1)^n \frac{{nB_{n + 1} }}{{n + 1}}\label{oz8lx8g}, \\
\sum_{k = 0}^n {\binom{{n}}{k}\frac{{B_{k + 2} }}{{k + 2}}}  &= \frac{1}{{\left( {n + 1} \right)\left( {n + 2} \right)}} + (-1)^n\frac{{B_{n + 1} }}{{n + 1}} + \frac{{B_{n + 2} }}{{n + 2}}\label{dvts2f1},
\end{align}
and
\begin{equation}\label{gnq7c91}
\sum_{k = 0}^n {(-1)^k\binom{{n}}{k}\frac{{B_{k + 2} }}{{k + 2}}}  = \frac{1}{{\left( {n + 1} \right)\left( {n + 2} \right)}} + (-1)^n\frac{{B_{n + 1} }}{{n + 1}} +(-1)^n \frac{{B_{n + 2} }}{{n + 2}}.
\end{equation}

\end{proposition}
\begin{proof}
Apostol~\cite{apostol08} derived an identity that can be written in the following equivalent form:
\begin{equation*}
\sum_{k = 2}^n {\binom{{n}}{{k - 2}}\frac{{B_k }}{k}}  = \frac{1}{{\left( {n + 1} \right)\left( {n + 2} \right)}} - (-1)^{n+1}B_{n + 1},
\end{equation*}
which produces~\eqref{sgj101q} by a shift of the summation index. Identity~\eqref{oz8lx8g} is obtained from~\eqref{sgj101q} by acting on both sides with the operator $\mathcal N_n$ defined in Lemma~\ref{btransform} on page~\pageref{btransform}. Identities~\eqref{dvts2f1} and~\eqref{gnq7c91} are obtained from~\eqref{sgj101q} and~\eqref{oz8lx8g} by Theorem~\ref{thm.u5nhiva}.
\end{proof}

\begin{proposition}
If $n$ is a non-negative integer, then
\begin{equation}\label{i2jitcy}
\sum_{k = 0}^n {\binom{{n}}{k}\frac{{B_{k + 1} }}{{k + 1}}}  = ( - 1)^{n + 1} \frac{{B_{n + 1} }}{{n + 1}} - \frac{1}{{n + 1}}.
\end{equation}

\end{proposition}

\begin{proof}
Operating on~\eqref{gnq7c91} with $\mathcal N_n$ gives
\begin{equation*}
\sum_{k = 0}^n {\binom{{n}}{k}\frac{{B_{k + 1} }}{{k + 1}}}  = -\sum_{k = 0}^n {\binom{{n}}{k}\frac{{B_{k + 2} }}{{k + 2}}}  - \frac{1}{{n + 2}} + ( - 1)^n \frac{{B_{n + 2} }}{{n + 2}},
\end{equation*}
from which~\eqref{i2jitcy} follows after using~\eqref{dvts2f1}.
\end{proof}

\bigskip
\hrule
\bigskip






\begin{thebibliography}{99}

\bibitem{adegoke1} K. Adegoke, A short proof of Knuth's old sum, manuscript, (2024).

\bibitem{apostol08} T. M. Apostol, A primer on Bernoulli numbers and polynomials, \emph{Mathematics Magazine} {\bf 81}:3 (2008), 178--190.

\bibitem{batir21} N. Bat\i r and A. Sofo, A unified treatment of certain classes of combinatorial identities, \emph{Journal of Integer Sequences} {\bf 24} (2021), Article 21.3.2.

\bibitem{batir21b} N. Bat\i r, Finite binomial sum identities with harmonic numbers, \emph{Journal of Integer Sequences} {\bf 24} (2021), Article 21.4.3.

\bibitem{batir23} N. Bat\i r, S. Sorgum and S. Atpinar, On some binomial coefficient identities with applications, \emph{Journal of Integer Sequences} {\bf 26} (2023), Article 23.4.4.

\bibitem{boyadzhiev09} K. N. Boyadzhiev, Harmonic number identities via Euler transform, \emph{Journal of Integer Sequences}, {\bf 12} (2009), Article 09.6.1.

\bibitem{boyadzhiev16} K. N. Boyadzhiev, Binomial transform of products, \emph{Ars Combinatorial} {\bf 126} (2016), 415--434.

\bibitem{boyadzhiev18} K. N. Boyadzhiev, \emph{Notes on the Binomial Transform, Theory and Table}, World Scientific, (2018).

\bibitem{chen07} K. W. Chen, Identities from the binomial transform, \emph{Journal of Number Theory} {\bf 124} (2007), 142--150.

\bibitem{dixon03} A. C. Dixon, Summation of a certain series, \emph{Proceedings of the London Mathematical Society} {\bf 35} (1903), 284--289.

\bibitem{donaghey76} R. Donaghey, Binomial self-inverse sequences and tangent coefficients, \emph{Journal of Combinatorial Theory (A)} {\bf 21} (1976), 155--163.

\bibitem{frontczak21} R. Frontczak, Binomial sums with skew-harmonic numbers, \emph{Palestine Journal of Mathematics} {\bf 10}:2 (2021), 756--763.

\bibitem{gould14} H. W. Gould and J. Quaintance, Bernoulli numbers and a new binomial transform identity, \emph{Journal of Integer Sequences} {\bf 17} (2014), Article 14.2.2.

\bibitem{gould} H. W. Gould, \emph{Combinatorial Identities}, Morgantown, West Virginia, 1972.


\bibitem{graham} R. L. Graham, D. E. Knuth and O. Patashnik, \emph{Concrete Mathematics: A Foundation for Computer Science}, Addison-Wesley, (1994).

\bibitem{knuth3} D. E. Knuth, \emph{The Art of Computer Programming, vol. 3: Sorting and Searching}, Addison-Wesley, (1998).

 \bibitem{koshy} T. Koshy, \emph{Fibonacci and Lucas Numbers with Applications}, Wiley-Interscience, 2001.

\bibitem{mikic} J.~Miki\'c, Two new identities involving the Catalan numbers and sign-reversing involutions, \emph{Journal of Integer Sequences} {\bf 22} (2019), Article 19.7.7.

\bibitem{poblete} P. V. Poblete, J. J. Munro and T. Papadakis, The binomial transform and the analysis of skip lists, \emph{Theoretical Computer Science} {\bf 352} (2006), 136--158.

\bibitem{quaintance10} J. Quaintance, \emph{Combinatorial Identities: Table I: Intermediate Techniques for Summing Finite Series}, From the seven unpublished manuscripts of H. W. Gould, (2010).

\bibitem{rathie22} A. K. Rathie and J. M. Campbell, Generalizations and variants of Knuth's old sum, preprint, arXiv:2205.05905v1 (2022).

\bibitem{sofo14} A. Sofo and S. Srivastava, A family of shifted harmonic sums, \emph{Ramanunjan Journal} {\bf } (2014), \url{DOI 10.1007/s11139-014-9600-9}.

\bibitem{mathworld} R. Stanley and E. W. Weisstein, Catalan number, \emph{MathWorld - A Wolfram Resource}, \url{https://mathworld.wolfram.com/CatalanNumber.html}.

\bibitem{sofo15} A. Sofo, New families of alternating harmonic number sums, \emph{Tbilisi Mathematical Journal} {\bf 8}:2 (2015), 195--209.

\bibitem{sulei23} E. Suleiman and B. Sury, A potpourri of plenty identities involving Catalan, Fibonacci and trigonometric numbers, \emph{Parabola} {\bf 59} (2023).

\bibitem{sun01} Z. Sun, self-inverse sequences under binomial transformation, \emph{The Fibonacci Quarterly} {\bf 39}:4 (2001), 324--333.

\bibitem{sun03} Z. Sun, Combinatorial identities in dual sequences, \emph{European Journal of Combinatorics} {\bf 24} (2003), 709--718.

\bibitem{sury04} B. Sury, Identities involving reciprocals of binomial coefficients, \emph{Journal of Integer Sequences} {\bf 7} (2004), Article 04.2.8.

\bibitem{trif00} T. Trif, Combinatorial sums and series involving inverses of binomial coefficients, \emph{The Fibonacci Quarterly} {\bf 38} (2000), 79--84.

\bibitem{vajda} S. Vajda, \emph{Fibonacci and Lucas Numbers, and the Golden Section: Theory and Applications}, Dover Press, 2008.

\bibitem{wang05} Y. Wang, Self-inverse sequences related to a binomial inverse pair, \emph{The Fibonacci Quarterly} {\bf 43}:1 (2005), 46--52.



\end{thebibliography}
\end{document}